\documentclass [11pt,oneside,a4paper]{amsart}
\usepackage{geometry}
\newgeometry{margin=1.1in}

\usepackage{amsfonts, amsmath, amssymb, amsthm,mathtools, stmaryrd}
\usepackage{verbatim}
\usepackage[normalem]{ulem}
\usepackage{tikz-cd}
\usepackage{enumitem}

%

\usepackage{array}

  



\usepackage{hyperref}

\theoremstyle{plain}
\newtheorem{thm}{Theorem}[section]
\newtheorem{cor}[thm]{Corollary}

\newtheorem{prop}[thm]{Proposition}

\numberwithin{equation}{section}

\newtheorem{conjecture}{Conjecture}  
\newtheorem{conj}[conjecture]{Conjecture}  

\theoremstyle{definition}
\newtheorem{defn}[thm]{Definition}
\newtheorem{example}[thm]{Example}
\newtheorem{lemma}[thm]{Lemma}
\newtheorem{rmk}[thm]{Remark}

\theoremstyle{remark}

\newcommand{\BC}{{\mathbb{C}}}

\newcommand{\BE}{{\mathbb{E}}}
\newcommand{\BF}{{\mathbb{F}}}
\newcommand{\BG}{{\mathbb{G}}}
\newcommand{\BH}{{\mathbb{H}}}

\newcommand{\BQ}{{\mathbb{Q}}}
\newcommand{\BR}{{\mathbb{R}}}

\newcommand{\BT}{{\mathbb{T}}}

\newcommand{\BZ}{{\mathbb{Z}}}

\newcommand{\CC}{{\mathcal C}}

\newcommand{\CE}{{\mathcal E}}
\newcommand{\CF}{{\mathcal F}}

\newcommand{\CH}{{\mathcal H}}
\newcommand{\CI}{{\mathcal I}}

\newcommand{\CK}{{\mathcal K}}
\newcommand{\CL}{{\mathcal L}}

\newcommand{\CO}{{\mathcal O}}
\newcommand{\CP}{{\mathcal P}}
\newcommand{\CQ}{{\mathcal Q}}

\newcommand{\Fa}{{\mathfrak{a}}}

\newcommand{\Ft}{{\mathfrak{t}}}

\newcommand{\blangle}{\big\langle}
\newcommand{\brangle}{\big\rangle}

\newcommand{\pt}{{\mathsf{p}}}
\newcommand{\td}{{\mathrm{td}}}

\newcommand{\A}{\mathsf{A}}

\DeclareFontFamily{OT1}{rsfs}{}
\DeclareFontShape{OT1}{rsfs}{n}{it}{<-> rsfs10}{}
\DeclareMathAlphabet{\curly}{OT1}{rsfs}{n}{it}
\renewcommand\hom{\curly H\!om}
\newcommand\ext{\curly Ext}
\newcommand\Ext{\operatorname{Ext}}
\newcommand\Hom{\operatorname{Hom}}

\newcommand{\p}{\mathbb{P}}

\newcommand\Spec{\operatorname{Spec}}

\newcommand{\vir}{{\text{vir}}}

\newcommand{\Coh}{\mathrm{Coh}}

\newcommand{\Pic}{\mathop{\rm Pic}\nolimits}
\newcommand{\PT}{\mathsf{PT}}
\newcommand{\piPT}{\pi\textup{-}\mathsf{PT}}

\newcommand{\DT}{\mathsf{DT}}
\newcommand{\GW}{\mathsf{GW}}

\newcommand{\Mod}{\mathsf{Mod}}

\newcommand{\T}{\mathsf{T}}
\newcommand{\Hilb}{\mathsf{Hilb}}

\newcommand{\id}{\mathrm{id}}

\newcommand{\Ker}{\mathrm{Ker}}

\newcommand{\tch}{\widetilde{\mathsf{ch}}}
\newcommand{\ch}{\mathsf{ch}}

\newcommand{\wt}{\mathsf{wt}}
\newcommand{\ind}{\mathsf{ind}}

\newcommand{\QJac}{\mathsf{QJac}}

\newcommand{\Jac}{\mathsf{Jac}}
\newcommand{\SL}{\mathrm{SL}}

\newcommand{\pr}{\mathrm{pr}}
\DeclareMathOperator{\Wt}{\mathsf{WT}}

\newcommand\Tan{\mathrm{Tan}}
\newcommand\Res{\operatorname{Res}}
\newcommand{\red}{\mathsf{red}}

\begin{document}
\baselineskip=14.5pt
\title[PT theory of elliptic threefolds]{Pandharipande-Thomas theory of elliptic threefolds, quasi-Jacobi forms and holomorphic anomaly equations}

\author{Georg Oberdieck}

\address{Department of Mathematics, KTH Royal Institute of Technology}
\email{georgo@kth.se}

\author{Maximilian Schimpf}
\email{mschimpf@kth.se}

\date{\today}

\begin{abstract}
Let $\pi : X \to B$ be an elliptically fibered threefold satisfying $c_3(T_X \otimes \omega_X)=0$. We conjecture that the $\pi$-relative generating series of Pandharipande-Thomas invariants of $X$ are quasi-Jacobi forms and satisfy two holomorphic anomaly equations. For elliptic Calabi-Yau threefolds our conjectures specialize to the Huang-Katz-Klemm conjecture. The proposed formulas constitute the first case of holomorphic anomaly equations in Pandharipande-Thomas theory.

We prove our conjectures for the equivariant Pandharipande-Thomas theory of $\BC^2 \times E$ when specialized to the anti-diagonal action. For $K3 \times \BC$ we state reduced versions of our conjectures. As a corollary we find an explicit conjectural formula for the stationary theory generalizing the Katz-Klemm-Vafa formula for K3 surfaces. Further evidence is available for $\p^2 \times E$ based on earlier work of the second author.

To deal with elliptic threefolds with $c_3(T_X \otimes \omega_X) \neq 0$ we show that the moduli space of $\pi$-stable pairs is represented by a proper algebraic space. We conjecture that the associated $\pi$-stable pair invariants form quasi-Jacobi forms.
\end{abstract}

\maketitle

\setcounter{tocdepth}{1} 
\tableofcontents

\section{Introduction}
\subsection{Pandharipande-Thomas theory}
Let $X$ be a smooth complex projective threefold.
A {\em stable pair} $(F,s)$ on $X$ consists of
\begin{itemize}
\item a pure $1$-dimensional sheaf $F$,
\item a section $s \in H^0(X,F)$ with zero-dimensional cokernel.
\end{itemize}
Let $P_{n,\beta}(X)$ be the fine projective moduli space of stable pairs with numerical data
\[ \mathrm{ch}_2(F) = \beta \in H_2(X,\mathbb{Z}), \quad \chi(F) = n \in \mathbb{Z}. \]
The moduli space carries naturally the descendent cohomology classes\footnote{By our conventions, we have
$\ch_0(\gamma) = - \int_X \gamma$ and $\ch_1(\gamma) = 0$.}
\[ \ch_{k}(\gamma) = \pi_{P \ast}( \mathrm{ch}_k(\mathbb{F} - \mathcal{O}) \cup \pi_X^{\ast}(\gamma) ), \quad k \geq 0, \quad \gamma \in H^{\ast}(S,\mathbb{Q}), \]
where $\pi_{P}, \pi_X$ are the projections of $P_{n,\beta}(X) \times X$ to the factors and $(\mathbb{F},s)$ is the universal stable pair.
Following \cite{MOOP} we will also use the modified descendents
\[ \tch_k(\gamma) := \ch_k(\gamma) + \frac{1}{24} \ch_{k-2}( \gamma \cdot c_2(T_X) ). \]

The Pandharipande-Thomas (PT) invariants of $X$ are defined in \cite{PT} by integrating
these classes over the virtual fundamental class of the moduli space:
\[
\left\langle \ch_{k_1}(\gamma_1) \cdots \ch_{k_n}(\gamma_n) \right\rangle^{X, \PT}_{n,\beta}
=
 \int_{[ P_{n,\beta} (X) ]^{\text{vir}}} \prod_i \ch_{k_i}(\gamma_i).
\]
We often drop the supscripts $X$ and $\PT$ from notation.
We extend the bracket $\left\langle - \right\rangle^{\PT}_{n,\beta}$
linearly to the algebra generated by the descendent classes $\ch_k(\gamma)$.
We refer to  \cite{Pdesc} for an overview of recent results and conjectures in PT theory.

\subsection{Elliptic threefolds}
Assume from now on that the threefold $X$ admits an elliptic fibration\footnote{i.e. a flat morphism with $\omega_{\pi}$ trivial on all fibers}
to a smooth projective surface $B$,
\[ \pi : X \to B. \]
We assume moreover that:
\begin{itemize}
\item $\pi$ has a section $\iota : B \to X$ with image the divisor $B_0 \subset X$,
\item $\pi : X \to B$ is a Weierstra{\ss} model \cite{D}.
\end{itemize}
Consider for any $\beta \in H_2(B,\BZ)$
the partial generating series
\begin{multline}
\label{correlator}
\left\langle \ch_{k_1}(\gamma_1) \cdots \ch_{k_n}(\gamma_n) \right\rangle_{\beta}^{X, \PT, \pi} \\
:= 
q^{-\frac{1}{2} c_1(N_{B/X}) \cdot \beta} \sum_{\substack{\widetilde{\beta} \in H_2(X,\BZ) \\ \pi_{\ast} \widetilde{\beta} = \beta }}
\sum_{m \in \frac{1}{2} \BZ}
i^{2m} p^m
  q^{B_0 \cdot \widetilde{\beta}} \left\langle \ch_{k_1}(\gamma_1) \cdots \ch_{k_n}(\gamma_n) \right\rangle^{X, \PT}_{m+\frac{1}{2} d_{\widetilde{\beta}}, \widetilde{\beta}}
\end{multline}
where
\begin{itemize}
\item $p,q$ are formal variables and $i = \sqrt{-1}$,
\item $N_{B/X}$ is the normal bundle of the section $\iota : B \to X$,
\item $d_{\widetilde{\beta}} = \int_{\widetilde{\beta}} c_1(T_X)$.
\end{itemize}
Define also the normalized series
\begin{equation} \label{normalized correlators}
Z_{\beta}\left( \ch_{k_1}(\gamma_1) \cdots \ch_{k_n}(\gamma_n) \right)
:= 
\frac{ \left\langle \ch_{k_1}(\gamma_1) \cdots \ch_{k_n}(\gamma_n) \right\rangle^{X, \PT,\pi}_{\beta} }{\left\langle 1 \right\rangle^{X, \PT,\pi}_{0} }.
\end{equation}

\vspace{5pt}
For elliptic {\em Calabi-Yau} threefolds $X$ the moduli space $P_{n,\beta}(X)$ is of virtual dimension zero
and all correlators for class $\beta$ are determined by the single Laurent series
\[ Z_{\beta} := Z_{\beta}(1) \in \BC((p))((q)). \]
The series $Z_{\beta}$ were studied intensively in physics \cite{BCOV, KMW, AS, KMRS, HKLV}
which culminated in the following beautiful conjecture
by Huang, Katz and Klemm \cite{HKK}:

\begin{conj}[Huang-Katz-Klemm, \cite{HKK}] \label{conj:HKK}
Let $X \to B$ be an elliptic Calabi-Yau threefold. For $\beta \in H_2(X,\BZ)$ the relative partition function
$Z_{\beta}$ is a meromorphic Jacobi form of weight $0$ and index $\frac{1}{2} \beta \cdot (\beta + K_B)$, which is of the form
\[
Z_{\beta}
=
	\Delta(q)^{\frac{1}{2} K_B \cdot \beta} 
	\sum_{\alpha = (\beta_1, \ldots, \beta_{\ell})}
	\frac{\varphi_{\alpha}(p,q)}
	{ \prod_{i=1}^{\ell} \Theta(p^{\mathrm{div}(\beta_i)}, q\big)^2 }
	\]
	where
	\begin{itemize}
\item $\alpha$ runs over all decompositions
	$\beta = \beta_1 + \ldots \beta_l$ into effective classes $\beta_i \in H_2(B,\mathbb{Z})$
	which are of divisibility $\mathrm{div}(\beta_i)$ in $H_2(B,\mathbb{Z})$, and
\item $\varphi_{\alpha} \in \Jac$ are (holomorphic) weak Jacobi forms.
	\end{itemize}
\end{conj}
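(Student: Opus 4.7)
The conjecture is a deep open prediction from physics; I do not have a complete proof, but the framework developed in this paper suggests the following line of attack, combining (i) a degeneration of the base $B$, (ii) the equivariant $\BC^{2}\times E$ theorem proved later in the paper, and (iii) the two holomorphic anomaly equations conjectured here.

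First, the plan is to reduce the global computation to local models. Since $X\to B$ is a Weierstrass Calabi--Yau, the base $B$ lies in a short list of Fano surfaces. Degenerating $B$ along a smooth rational curve produces a reducible surface $B_{1}\cup_{D}B_{2}$ and, by flatness, a corresponding degeneration of the elliptic fibration. The degeneration formula for stable pair invariants (Li--Wu, Maulik--Pandharipande--Thomas) then expresses $Z_{\beta}$ as a sum over splittings $\beta=\beta_{1}+\beta_{2}$ of $\pi$-relative partition functions on the two pieces. Iterating this procedure down to toric components and applying torus localization on the base reduces the whole problem to assembling products of equivariant $\BC^{2}\times E$ vertices.

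Next, I would reassemble the vertex pieces into the global series. The anti-diagonal equivariant PT theory of $\BC^{2}\times E$ proved later in this paper supplies the vertex building block, and quasi-Jacobi theta/Eisenstein identities glue vertices along edges. The conjectured denominators $\Theta(p^{\mathrm{div}(\beta_{i})},q)^{-2}$ should emerge through multi-cover contributions of sections of divisibility $\mathrm{div}(\beta_{i})$, in direct analogy with the multiple-cover structure appearing in the Kawai--Yoshioka/KKV formulas for $K3$. This step already produces a quasi-Jacobi form of the correct weight and index.

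The last step would be to promote the resulting quasi-Jacobi form to a genuine meromorphic Jacobi form. Granting the two holomorphic anomaly equations conjectured in this paper, an induction on $\beta$ and on the polar order (following the bootstrap used by Oberdieck--Pixton for Hilbert schemes of $K3$ surfaces and by Oberdieck for elliptic surfaces) determines the anomalous part of $Z_{\beta}$ uniquely in terms of lower strata, and thereby forces the precise decomposition indexed by $\alpha=(\beta_{1},\ldots,\beta_{\ell})$. The main obstacle will be exactly this last step: the holomorphic anomaly equations themselves are conjectural and would need to be established first, and even granting them, matching the exact index $\tfrac{1}{2}\beta\cdot(\beta+K_{B})$ and the stratification by divisibilities requires a uniform BPS-type accounting of multiple-cover contributions that is presently understood only in very symmetric cases. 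Making this accounting work uniformly across all effective $\beta\in H_{2}(B,\BZ)$ is what renders the full Huang--Katz--Klemm conjecture genuinely difficult.
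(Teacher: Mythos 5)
This statement is Conjecture~\ref{conj:HKK} --- the paper does not prove it and does not claim to. It is quoted from Huang--Katz--Klemm as an open prediction, and the paper's only logical relation to it is that the more general Conjectures~\ref{conj:QJac introduction} and~\ref{conj:HAE introduction} \emph{specialize} to it when $X$ is Calabi--Yau (using $N_{B/X}\cong\omega_B$, and the fact that with no insertions the two anomaly equations reduce to $\tfrac{d}{d\A}Z_\beta=\tfrac{d}{dG_2}Z_\beta=0$, i.e.\ to the assertion that $Z_\beta$ is an honest Jacobi form). So there is no proof in the paper to compare yours against, and your proposal --- which you candidly present as a research program rather than a proof --- cannot be judged correct or incorrect as a proof of the statement.

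That said, two concrete points where your sketch overreaches. First, the equivariant $\BC^2\times E$ result proved in Section~\ref{ex:C2xE} is not a ``vertex'' in the sense you need: the moduli spaces there are empty for $\chi<0$, contribute zero for $\chi>0$, and for $\chi=0$ reduce to $\Hilb^d(\BC^2)$, so the computation only captures curve classes that are multiples of the fiber class $[E]$ (base degree zero over $\BC^2$). It therefore cannot serve as the local building block for positive degree over the base, which is where all the content of Conjecture~\ref{conj:HKK} lies; moreover it is proved only for the anti-diagonal torus weight $t_1=-t_2$, whereas a gluing/localization scheme on a toric base would require general weights. Second, even granting the holomorphic anomaly equations, they only fix the dependence of $Z_\beta$ on the quasi-parameters $\A$ and $G_2$ --- in the Calabi--Yau case they merely say $Z_\beta$ is a genuine Jacobi form and are silent on the weight-$0$/index claim's companion, the pole structure $\prod_i\Theta(p^{\mathrm{div}(\beta_i)},q)^{-2}$ and the holomorphy of the numerators $\varphi_\alpha$, which is the genuinely hard part of the conjecture and would have to come from an entirely separate argument (a BPS/multiple-cover analysis that, as you note, is not available). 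Your degeneration step also assumes a relative PT degeneration formula and a class of degenerations of Weierstrass models over Fano bases that are not established in the paper or its references.
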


In the statement above we used the weight $12$ discriminant modular form
\[ \Delta(q) = q \prod_{n \geq 1} (1-q^n)^{24} \quad \in \Mod_{12} := \Jac_{12,0} \]
and the index $1/2$ weight $-1$ Jacobi theta function
\begin{equation} \label{Theta}
\Theta(p,q) 
=  (p^{1/2}-p^{-1/2})\prod_{m\geq 1} \frac{(1-pq^m)(1-p^{-1}q^m)}{(1-q^m)^{2}}.
\end{equation}
The bigraded algebra of weak Jacobi forms $\Jac$ is recalled in Section~\ref{subsec:quasi-Jacobi forms}.

\vspace{4pt}
\begin{example}
Let $\pi : X \to \p^2$ be the elliptic CY3 over $\p^2$
and identify $\Pic(\p^2) = \BZ$.
The first non-reduced case in the base is degree $2$.
Conjecture~\ref{conj:HKK} says 
\[
Z_{2}
\, = \,
\frac{\varphi_{11}(p,q)}{\Delta^{3} \varphi_{-2,1}(p,t)^2}
+
\frac{\varphi_{2}(p,q)}{\Delta^{3} \varphi_{-2,1}(p^2,t)}.
\]
for 
$\varphi_{11} \in \Jac_{32,1}$ and $\varphi_{2} \in \Jac_{34,3}$
which are easily fixed by basic computations, see \cite{HKK}. \qed
\end{example}

Conjecture~\ref{conj:HKK} allows for efficient computations of Pandharipande-Thomas invariants
in these geometries. For every given class $\beta$ one only has to fix a short list of coefficients
to determine the series $Z_{\beta}$.
This has been used in \cite{HKK} to determine the invariants of the elliptic CY3 $X \to \p^2$ up to degree 20 over the base.
Partial evidence towards this conjecture on the mathematical side can be found in \cite{FM, BK, elfib}.

In this paper, we investigate the generating series \eqref{correlator} for elliptic threefolds which are not necessarily Calabi-Yau.
The cohomological insertions $\ch_k(\gamma)$ are a new feature compared to the Calabi-Yau case.
We will conjecture the appearance of quasi-Jacobi forms and holomorphic anomaly equations.
The outcome is similar to the situation for elliptic genera:
Elliptic genera of Calabi-Yau manifolds are Jacobi forms, while elliptic genera of complex manifolds are quasi-Jacobi forms  in general \cite{Lib}.

\subsection{Main conjectures}
We state our main conjectures.

Our first conjecture concerns the normalization factor:
\begin{conj}[Normalization factor] \label{conj:normalization} We have
	\begin{align*}
		\left\langle 1 \right\rangle^{X,\PT, \pi}_{0} & =
		\prod_{m \geq 1} (1-q^m)^{-e(B) - c_1(N) \cdot (c_1(T_B) + c_1(N))} \prod_{\ell, m \geq 1} (1- p^{\ell} q^{m} )^{-\ell \cdot c_3(T_X \otimes \omega_X) }
	\end{align*}
	where $N = N_{B/X}$ is the normal bundle of the section.
\end{conj}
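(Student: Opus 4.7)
The partition function $\langle 1\rangle^{X,\PT,\pi}_{0}$ sums over stable pairs whose total curve class pushes forward to zero in $B$, so each contributing pair is set-theoretically supported on a finite union of fibers of $\pi$. My plan is to first establish the structural form of the answer by a support stratification, then pin down the coefficients on test geometries.

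\emph{Step 1 (Support stratification and factorization).} Mapping a stable pair to the underlying $0$-cycle on $B$ stratifies the moduli by $\Sym(B)$. Because $\omega_{\pi}$ is trivial in a formal neighborhood of every fiber, the local PT moduli are Calabi-Yau, so Behrend's weighted Euler characteristic formula applies fiberwise. Combined with a Joyce-Song motivic Hall algebra exponential (equivalently, a cluster expansion of $\chi^B$), this yields an additive decomposition
\[
\log \langle 1\rangle^{X,\PT,\pi}_{0} \;=\; \sum_{b \in B} F(X/B, b;\,p,q),
\]
where $F(X/B, b)$ depends only on the formal neighborhood of $\pi^{-1}(b)$ in $X$.

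\emph{Step 2 (Local computation and universality).} For a smooth fiber, the local model is $E_b \times \Spec \widehat{\CO}_{B,b}$ twisted by $N=N_{B/X}$, and $F(X/B,b)$ depends analytically on the characteristic data at $b$. Integrating over the smooth locus of $B$ produces a universal $q$-only factor $\prod_m(1-q^m)^{-a(B,N)}$, with $a(B,N)$ a polynomial in intersection numbers of $(B,N)$ by dimensional bookkeeping. The two relevant coefficients are fixed on test geometries: $X = B\times E$ for varying $B$ isolates the $e(B)$ coefficient, while elliptic fibrations over $\p^1$ (or more generally $\p(\CO_B \oplus L)$-type bundles with varying $L$) isolate the $c_1(N)\cdot(c_1(T_B)+c_1(N))$ correction. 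For the $p$-dependent piece, combine the DT/PT correspondence with the MNOP degree-$0$ formula
\[
Z^{\DT}_{0}(X) \;=\; M(-q)^{\int_X c_3(T_X\otimes\omega_X)}
\]
(Li, Levine-Pandharipande): the $p$-refined contribution must then take the form $\prod_{\ell,m}(1-p^\ell q^m)^{-\ell \cdot c_3(T_X\otimes\omega_X)}$, which is verified by a topological-vertex style computation on a local Weierstrass model where $c_3(T_X\otimes\omega_X)$ localizes to the discriminant.

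\emph{Main obstacle.} The most delicate point is Step~1: the rigorous product-over-fibers factorization requires either Kiem-Li cosection localization adapted to the $\pi$-relative symmetry, or a motivic Hall algebra argument with carefully chosen orientation data, both nontrivial for PT moduli on threefolds whose ambient is not globally Calabi-Yau. A cleaner alternative is a pure universality argument: prove that $\log \langle 1\rangle^{X,\PT,\pi}_{0}$ is a polynomial expression in the four intersection numbers $e(B)$, $c_1(N)^2$, $c_1(N)\cdot c_1(T_B)$, and $\int_X c_3(T_X\otimes\omega_X)$, with coefficients in $\BQ((p))((q))$, then fix each coefficient function on the test list above. Establishing the polynomial (universal) dependence structurally---via deformation invariance in a family of elliptic fibrations---is where the heart of the work lies.
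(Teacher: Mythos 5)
First, a point of logic: the statement you are proving is Conjecture~\ref{conj:normalization}, and the paper does not prove it. It only verifies it in two special cases (Proposition~\ref{prop:PT0 check}), where the verification consists of (i) identifying $P_0(X,dF)\cong B^{[d]}$, computing the obstruction bundle explicitly via $R\pi_*\CO_X = \CO_B - N$ in K-theory, and evaluating the resulting tautological integral by the Carlsson--Okounkov formula (Lemma~\ref{lemma:basic computation}), and (ii) showing that the $n\neq 0$ moduli spaces contribute zero, by a free $E$-action/cosection argument in case (a) and by degeneration plus GW/PT plus torus localization in case (b). In both cases $c_1(N)^2=0$, so the $p$-dependent factor is trivially $1$ and is never tested. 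Your proposal aims at the full general statement, which is more ambitious than anything the paper establishes, so it must be judged on its own merits.

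On those merits there are two genuine gaps. The first is Step~1: the factorization via Behrend functions and a Joyce--Song/Hall-algebra exponential is not available here, because PT invariants of a threefold that is not Calabi--Yau are not weighted Euler characteristics --- the obstruction theory is not symmetric. Fiberwise triviality of $\omega_\pi$ does not make the local models Calabi--Yau in the relevant sense: what enters the virtual class is $\omega_X$ (equivalently the twist by $N_{B/X}$), and indeed the paper's own computation in Lemma~\ref{lemma:basic computation} shows the obstruction bundle on $P_0(X,dF)\cong B^{[d]}$ is $R\hom(I_z,I_z\otimes N)+R\Gamma(S,N)[-1]$, which depends essentially on $N$ and is not killed by any $\chi^B$-type argument. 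So the ``product over fibers'' decomposition, which you correctly identify as the heart of the matter, does not follow from the tools you cite. The second gap is the $p$-dependent factor: the MNOP/Li/Levine--Pandharipande degree-zero DT formula concerns $0$-dimensional subschemes and carries no $p$-variable, whereas the exponents $p^\ell$ here record the Euler characteristics of $1$-dimensional sheaves supported on fibers (the $n\neq 0$ strata of $P_{n,dF}(X)$). Nothing in the DT/PT correspondence forces the stated shape $\prod_{\ell,m}(1-p^\ell q^m)^{-\ell\, c_3(T_X\otimes\omega_X)}$; asserting that it ``must then take this form'' is precisely the content of the conjecture, not a consequence of known results. Your fallback universality-plus-test-geometries strategy is reasonable in spirit (and the $q$-only coefficient could indeed be fixed on the paper's two product families), but the test geometries you list all have $c_3(T_X\otimes\omega_X)=0$, so they cannot determine the $p$-dependent exponent at all; one would need computations on a genuinely non-isotrivial Weierstra{\ss} model with $c_1(N)^2\neq 0$, which neither you nor the paper provides.
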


For elliptic Calabi-Yau threefolds Conjecture~\ref{conj:normalization} is known by work of Toda \cite[Thm 6.9]{T12}.
We verify it below for the product of an elliptic surface with a curve,
and for the trivial elliptic fibration, see Proposition~\ref{prop:PT0 check}.


Next we consider non-zero classes $\beta$.
We specialize to the case that
\[ c_3(T_X \otimes \omega_X) = 0 \]
which is satisfied if and only if $c_1(N_{B/X})^2 = 0$,
see Corollary~\ref{cor:constant}.
In particular, this applies to the product of an elliptic surface with a curve or the trivial elliptic fibration.

Let $\QJac = \oplus_{k,n} \QJac_{k,m}$ be the algebra of quasi-Jacobi forms,
bigraded by weight $k$ and index $m$ with finite-dimensional summands
$\QJac_{k,m}$, see Section~\ref{subsec:quasi-Jacobi forms}.

We have the following (partial) generalization of the Huang-Katz-Klemm conjecture:\footnote{For Calabi-Yau threefolds $X$ we have $N_{B/X} \cong \omega_X$.}

\begin{conj} \label{conj:QJac introduction}
Let $X \to \p^1$ be an elliptic threefold satisfying $c_3(T_X \otimes \omega_X)=0$. Then:
\begin{enumerate}
\item[(a)] (Quasi-Jacobi form property) Each $Z_{\beta}( \tch_{k_1}(\gamma_1) \cdots \tch_{k_n}(\gamma_n))$ is a meromorphic quasi-Jacobi form of
	\begin{align*}
		\text{weight} \quad & K_X \cdot \iota_{\ast}\beta + \sum_{i=1}^{n} (k_i - 1 + \mathrm{wt}(\gamma_i)), \\
		\text{index} \quad & \frac{1}{2} \beta \cdot (\beta + c_1(N_{B/X})),
	\end{align*}
where $\wt(\gamma_i)$ are the eigenvalues of the eigenvectors $\gamma_i \in H^{\ast}(X)$ under the weight operator
\[ \Wt := [ B_0 \cup  , \pi^{\ast} \pi_{\ast} ] : H^{\ast}(X) \to H^{\ast}(X). \]
\item[(b)] (Pole structure) We have
	\[
	Z_{\beta}\left( \tch_{k_1}(\gamma_1) \cdots \tch_{k_n}(\gamma_n) \right)
	=
	\Delta(q)^{\frac{1}{2} c_1(N) \cdot \beta} 
	\sum_{\alpha = (\beta_1, \ldots, \beta_{\ell})}
	\frac{\varphi_{\alpha}(p,q)}
	{ \prod_{i=1}^{\ell} \Theta(p^{\mathrm{div}(\beta_i)}, q\big)^2 }
	\]
	where
	\begin{itemize}
\item $\alpha$ runs over all decompositions
	$\beta = \beta_1 + \ldots \beta_l$ into effective classes $\beta_i \in H_2(B,\mathbb{Z})$
	which are of divisibility $\mathrm{div}(\beta_i)$ in $H_2(B,\mathbb{Z})$, and
\item $\varphi_{\alpha} \in \QJac$ are quasi-Jacobi forms.
	\end{itemize}
\end{enumerate}
\end{conj}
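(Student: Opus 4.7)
The plan is to reduce Conjecture~\ref{conj:QJac introduction} to the equivariant PT theory of $\BC^2 \times E$ under the anti-diagonal action (proved elsewhere in this paper) via a combination of degeneration of the base $B$ and equivariant localization, and then to track the quasi-Jacobi structure through the resulting vertex/edge formalism.

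First, I would pass to the $\pi$-stable pair theory introduced in this paper: since the associated moduli space is a proper algebraic space, the $\pi$-stable pair invariants are well-defined, and one should expect a cleaner quasi-Jacobi structure on this side because the moduli are fiberwise-rigidified. The step of transferring the result back to ordinary PT invariants requires a (motivic) wall-crossing in the spirit of Toda, in which the wall-crossing kernels are supported on vertical curves and built from $\Theta$-, $\eta$-, and Eisenstein-type factors; showing that these kernels are themselves meromorphic quasi-Jacobi forms of the correct weight and index is the first substantive step. Combining this with the normalization of Conjecture~\ref{conj:normalization} should already produce the prefactor $\Delta(q)^{\frac{1}{2} c_1(N) \cdot \beta}$.

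Second, after a deformation of $B$ into a configuration of toric surface charts that preserves $c_3(T_X \otimes \omega_X)=0$ and the Weierstrass structure, I would apply $T$-equivariant localization with respect to a two-dimensional torus acting on $B$ with isolated fixed points and preserving the section $\iota : B \to X$. Near each fixed point the fibration is locally $\BC^2 \times E$ with the anti-diagonal action, so the equivariant vertex is quasi-Jacobi by the theorem proved in this paper. Gluing vertices along edges then produces the global generating series: the weight $K_X \cdot \iota_{\ast}\beta + \sum_i (k_i - 1 + \wt(\gamma_i))$ should emerge from summing local weight contributions together with the shifts from the $\tch_{k_i}(\gamma_i)$ insertions, while the index $\frac{1}{2}\beta \cdot (\beta + c_1(N_{B/X}))$ should appear by assembling local vertex indices twisted by the normal bundle of $\iota$. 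The pole structure in part (b) should correspond to the decomposition $\beta = \beta_1 + \cdots + \beta_\ell$: each component contributes a factor $\Theta(p^{\mathrm{div}(\beta_i)}, q)^2$ in the denominator arising from sheaves whose support covers its image with the given divisibility.

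The main obstacle is the wall-crossing step: to transfer modularity cleanly from $\pi$-stable pairs to ordinary stable pairs, one must show that the difference is itself a quasi-Jacobi form with exactly the correct weight, index, and poles. This seems to require either a categorical enhancement of PT theory along the fibration or an explicit description of wall-crossing kernels via Nakajima- or Hecke-type operators on the cohomology of the relative Hilbert scheme of $\pi$. A secondary difficulty is to upgrade the equivariant vertex theorem for $\BC^2 \times E$ so that it accommodates all descendent insertions $\tch_{k}(\gamma)$ with arbitrary cohomological $\gamma \in H^{\ast}(X)$, and not only the restrictions that arise naturally on the local charts; an interpolation argument tying global insertions to sums of local ones (together with the eigendecomposition of $\Wt$) will be needed to recover the full weight statement in part (a).
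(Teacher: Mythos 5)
There is no proof of this statement in the paper to compare against: Conjecture~\ref{conj:QJac introduction} is stated and left open. The only support the authors offer is (i) the low-degree computations for $\p^2 \times E$ from \cite{Schimpf}, (ii) the equivariant $\BC^2\times E$ computation specialized to the anti-diagonal torus $t_1=-t_2$, and (iii) a heuristic transfer of the Gromov--Witten conjectures of \cite{elfib,HAE} through the GW/PT descendent correspondence, which the authors themselves describe as ``almost never rigorous.'' So your proposal should be judged as a strategy for an open problem, not as an alternative to an existing argument.

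As a strategy it has two gaps that are fatal as written. First, your opening reduction to $\pi$-stable pairs and back is exactly Conjecture~\ref{conj:wallcrossing} of the paper, which is itself unproven; and even granting it, the claim that the wall-crossing kernels are ``supported on vertical curves and built from $\Theta$-, $\eta$-, and Eisenstein-type factors'' has no construction behind it --- the authors explicitly say they do not yet know the shape of the PT/$\pi$-PT comparison. Second, and more seriously, the localization step cannot terminate at the theorem actually proved in this paper. A torus fixed point of a toric model of $B$ contributes a local $\BC^2\times E$ vertex with tangent weights $(t_1,t_2)$ dictated by the fixed-point geometry, and one must sum residues over all fixed points at \emph{those} weights; one is never allowed to restrict each vertex to its own anti-diagonal $t_1=-t_2$. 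Off that locus $c_3(T_X\otimes\omega_X)=-(t_1+t_2)t_1t_2\neq 0$ as an equivariant class, and the paper's own computation (formula \eqref{4ygfg} and the final proposition of Section~\ref{sec:pi stable pairs}) shows the resulting series are \emph{not} quasi-Jacobi forms; quasi-Jacobiness of the global answer would have to emerge only after summing over fixed points, which is precisely the hard cancellation your outline assumes away. In addition, a general Weierstrass threefold over a non-toric $B$ admits no global torus action, and degenerating $B$ to toric pieces forces you into relative/descendent degeneration formulas for PT theory that are not established in the generality needed here. The weight, index, and pole-structure bookkeeping you describe is plausible as a consistency check (and is essentially how Proposition~\ref{prop:weight} extracts the weight from the divisor equation), but it does not substitute for these missing inputs.
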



The algebra $\QJac$ admits an embedding
into the free polynomial ring of two particular generators over the algebra of (weak) Jacobi forms,
\[ \QJac \subset \Jac[ \A , G_2]. \]
The Eisenstein series $G_2(q)$ and the Weierstra{\ss} zeta function $\A(p,q) = p\frac{d}{dp} \log \Theta(p,q)$ are recalled in Section~\ref{subsec:quasi-Jacobi forms}.
In particular, by viewing a quasi-Jacobi form as a polynomial in $\A$ and $G_2$ with coefficients in $\Jac$,
we can talk about the derivation operators:
\[
\frac{d}{dG_2} : \QJac \to \QJac, \quad \frac{d}{d \A}: \QJac \to \QJac.
\]

We state the holomorphic anomaly equations which fixes the dependence of the quasi-Jacobi form
$Z_{\beta}\left( \tch_{k_1}(\gamma_1) \cdots \tch_{k_n}(\gamma_n) \right)$
on the quasi-parameters, and hence determines the series up to a purely Jacobi form part.

\begin{conj}[Holomorphic anomaly equations] \label{conj:HAE introduction} Assume $c_3(T_X \otimes \omega_X) = 0$ and the previous conjecture. Then we have:
	\begin{align*}
		\frac{d}{d \A} Z_{\beta}\left( \tch_{k_1}(\gamma_1) \cdots \tch_{k_n}(\gamma_n) \right)
		= & 
		\sum_{i = 1}^{n}
		\sigma_i 
		Z_{\beta}\left( \tch_{k_i-1}( \gamma_i \Delta_{B,1} ) \tch_{2}( \Delta_{B,2} ) \prod_{\ell \neq i} \tch_{k_{\ell}}(\gamma_{\ell}) \right) \\
		& +\sum_{i=1}^{n} \sigma_i Z_{\beta} \left( \ \tch_{k_i+1}( \pi^{\ast} \pi_{\ast}(\gamma_i) ) \prod_{\ell \neq i} \tch_{k_{\ell}}(\gamma_{\ell}) \ \right).
	\end{align*}
		
and
		\begin{small}
\begin{align*}
& \frac{d}{dG_2} Z_{\beta}\left( \tch_{k_1}(\gamma_1) \cdots \tch_{k_n}(\gamma_n) \right) = \\
& -2 \sum_{i < j} \sigma_{ij} Z_{\beta}\left( \tch_{k_i-1}(\gamma_i \Delta_{B,1}) \tch_{k_j-1}(\gamma_j \Delta_{B,2}) \prod_{\ell \neq i,j} \tch_{k_{\ell}}(\gamma_{\ell}) \right) \\
& - \sum_{i=1}^{n} \sigma_i Z_{\beta}\left( \tch_{k_i-2}( \gamma_i \cdot c_2(B)) \prod_{\ell \neq i} \tch_{k_{\ell}}(\gamma_{\ell}) \right) \\
& + \sum_{i=1}^{n} \sum_{m_1 + m_2 = k_i} \sigma_i Z_{\beta}\left( (-1)^{(1+\wt_L)(1+\wt_R)} \frac{(m_1 - 1 + \wt^L)! (m_2 - 1 + \wt^R)!}{(k_i - 2 + \wt(\gamma_i))!} \tch_{m_1} \tch_{m_2} ( \CE( K_X \cdot \gamma_i )) \prod_{\ell \neq i} \tch_{k_{\ell}}(\gamma_{\ell}) \right) \\
& -2 \sum_{i < j} \sigma_{ij} (-1)^{(1+\wt(\gamma_i)) (1+\wt(\gamma_j))} \binom{ k_i + k_j - 4 + \wt(\gamma_i) + \wt(\gamma_j) }{ k_i-2 + \wt(\gamma_i), \, k_j - 2 + \wt(\gamma_j) } 
 Z_{\beta}\left( \tch_{k_i+k_j-2}( \CE( \gamma_i \boxtimes \gamma_j )) \prod_{\ell \neq i,j} \tch_{k_{\ell}}(\gamma_{\ell}) \right),
\end{align*}
\end{small}
where 
\begin{itemize}
\item $(\Delta_{B,1}, \Delta_{B,2})$ stands for summing over the K\"unneth decomposition of the diagonal class $\Delta_B \in H^{\ast}(B \times B)$,
\item we let $\mathcal{E} \in H^{\ast}(X \times X \times X)$ be the correspondence
defined by
\begin{align*} \mathcal{E} & \coloneqq \Delta_{X,12} \cdot \pi^*\Delta_{B,13}+\Delta_{X,13} \cdot \pi^*\Delta_{B,12} + \Delta_{X,23} \cdot \pi^*\Delta_{B,12}  \\
	& \quad -\pi^*\Delta_{B,123}\big(\mathrm{pr}_1^*W + \mathrm{pr}_2^* W + \mathrm{pr}_3^* W \big), \end{align*}
\item $\sigma_i$ (resp. $\sigma_{ij}$) are the signs obtained by permuting the $i$-th entry (resp. the $i$-th and $j$-th entry) of $(\gamma_1, \ldots, \gamma_n)$ to the left-most position,
\item and several other conventions explained in Section~\ref{sec:HAE}.
\end{itemize}
\end{conj}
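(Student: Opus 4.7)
\emph{Overall strategy.} The plan is to attack both holomorphic anomaly equations simultaneously by exploiting that, given Conjecture~\ref{conj:QJac introduction}, each correlator $Z_{\beta}(\tch_{k_1}(\gamma_1) \cdots \tch_{k_n}(\gamma_n))$ is uniquely a polynomial in the quasi-generators $\A$ and $G_2$ with coefficients in the algebra of meromorphic weak Jacobi forms, after extracting the overall denominator from part (b). The derivations $d/d\A$ and $d/dG_2$ then read off the coefficients of the linear $\A$- and $G_2$-parts, and my proposed route is to match each side via the $\pi$-relative degeneration formula for PT invariants. More precisely, I would degenerate $X$ fiberwise over a curve in $B$ so that a single elliptic fiber acquires a node (for the $d/d\A$-equation) or two elliptic fibers acquire nodes simultaneously (for the $d/dG_2$-equation), and then identify the resulting boundary contributions with the RHS of each equation.

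\emph{The $d/d\A$-equation.} Using $\A(p,q) = p \tfrac{d}{dp}\log \Theta(p,q)$, the key algebraic fact is that $d/d\A$ applied to a quasi-Jacobi form detects precisely the anomaly of sliding one insertion through the elliptic direction of a single fiber. Geometrically this should come from a one-parameter family of fibrations in which one smooth fiber degenerates to a nodal cubic; by the $\pi$-relative degeneration formula, the boundary contribution is a sum over ways of distributing insertions and curve class across the node. The first RHS term, $\tch_{k_i-1}(\gamma_i \Delta_{B,1}) \tch_2(\Delta_{B,2})$, encodes splitting the insertion $\gamma_i$ along the K\"unneth decomposition of the diagonal $\Delta_B$ and placing the "point part" $\tch_2(\Delta_{B,2})$ on the rational tail at the node. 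The second term $\tch_{k_i+1}(\pi^{\ast}\pi_{\ast}\gamma_i)$ records the extra GRR correction obtained when reabsorbing the vertical component of $\gamma_i$ into a single descendent of degree $k_i + 1$.

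\emph{The $d/dG_2$-equation.} The $G_2$-anomaly is more delicate because $G_2$ is the quasi-modular piece that measures the failure of modularity of the whole series, and so captures both pairwise and self-interaction terms among the $n$ insertions. I would use a two-node degeneration (or equivalently a careful analysis of the universal elliptic family over the modular curve near the cusp) to isolate four sources of anomaly: (i) pairwise splittings of two distinct insertions across two nodes, producing the first sum with paired diagonals $\gamma_i \Delta_{B,1}, \gamma_j \Delta_{B,2}$; (ii) a self-interaction $c_2(B)$-term arising from the excess intersection when an insertion interacts with itself across the self-crossing of the anomaly locus; (iii) two fusion contributions involving the correspondence $\CE$, which encode the GRR correction when two descendents collapse into one higher descendent, giving the last two sums. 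The multinomials $\binom{k_i+k_j-4+\wt(\gamma_i)+\wt(\gamma_j)}{k_i-2+\wt(\gamma_i),k_j-2+\wt(\gamma_j)}$ and the symmetric factorial factors should emerge from expanding Chern characters of the universal sheaf restricted to diagonals in $X^{\times 3}$, while the signs $(-1)^{(1+\wt_L)(1+\wt_R)}$ come from moving the $\gamma_i$ past each other through the weight operator $\Wt$.

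\emph{Main obstacle.} The hardest part will be pinning down the exact combinatorics of the $d/dG_2$ equation. The appearance of the correspondence $\CE$, which mixes diagonals in $X \times X \times X$ with pullbacks of diagonals in $B \times B \times B$ and a canonical correction term involving the Weierstra\ss{} data $W$, strongly suggests the identity should be proven at the level of universal sheaves on $X^{\times 3}$ via an explicit Grothendieck-Riemann-Roch computation, rather than by matching numerical coefficients term-by-term after degeneration. A further technical difficulty, flagged in the abstract, is that to carry out this degeneration argument rigorously one needs a well-behaved moduli of $\pi$-stable pairs (a proper algebraic space, not a scheme in general), so the intermediate steps must be formulated on that level. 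Finally, verification in the fully equivariant local model $\BC^2 \times E$ under the anti-diagonal torus action, as already achieved in this paper, should anchor the correct global normalization of all constants appearing on the RHS, after which the general case should follow by a combination of degeneration and universality.
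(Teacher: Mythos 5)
The statement you are trying to prove is a \emph{conjecture}: the paper offers no proof of it in general, and your proposal does not close that gap. What the paper actually provides is (i) a heuristic derivation --- the formula is obtained by transporting the Gromov--Witten holomorphic anomaly equations for elliptic fibrations from \cite{elfib,HAE} to the stable-pairs side via the GW/PT descendent correspondence \cite{PaPix,MOOP}, comparing $z$-coefficients under $p=e^z$ and using the expression of the factorwise $G_2$-derivative in terms of $d/dG_2$ and $d/d\A$ from \cite{HilbHAE}; the paper is explicit that this argument is ``almost never rigorous'' because the required inputs are not simultaneously known in nontrivial cases; (ii) a genuine proof only in the equivariant example $\BC^2\times E$ with the anti-diagonal action, where the correlators reduce to Bloch--Okounkov $n$-point functions and the $G_2$-equation becomes Pixton's identity \cite[Lem.~4.2.2]{pixtonthesis}; and (iii) numerical checks for $\p^2\times E$ in low degree. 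Your degeneration strategy is a different route, but as written it would fail at every load-bearing step: there is no established $\pi$-relative degeneration formula for PT invariants in which ``a single elliptic fiber acquires a node,'' the identification of $d/d\A$ with a one-node degeneration and of $d/dG_2$ with a two-node degeneration is purely heuristic, and the proposed GRR computation on $X^{\times 3}$ that is supposed to produce the correspondence $\CE$, the multinomial coefficients, and the signs is not even sketched. None of the boundary contributions are matched to the stated terms by an actual computation.

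You also miss a structural point that the paper does establish rigorously and that would simplify any attempted proof: the $d/d\A$-equation is a \emph{formal consequence} of the $d/dG_2$-equation. Applying $d/dG_2$ to the string equation $Z_{\beta}(\tch_3(1)\prod_i\tch_{k_i}(\gamma_i)) = p\frac{d}{dp}Z_{\beta}(\prod_i\tch_{k_i}(\gamma_i))$ and using the commutation relation $[\frac{d}{dG_2},\, p\frac{d}{dp}] = -2\frac{d}{d\A}$ from \eqref{eq:comm relations 1} yields the first displayed equation; similarly the index statement follows from $[\frac{d}{d\A},\, p\frac{d}{dp}]=2\,\mathrm{ind}$, and the weight follows from the divisor equation together with $[\frac{d}{dG_2}, D_\tau]=-2\,\mathrm{wt}$. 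So only the $G_2$-equation requires independent justification, and your proposal treats the two equations as requiring separate geometric inputs.
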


For elliptic Calabi-Yau threefolds, Conjectures~\ref{conj:QJac introduction} and~\ref{conj:HAE introduction} together specialize to Conjecture~\ref{conj:HKK}.
Evidence in non-Calabi-Yau geometries is given in Section~\ref{sec:First examples} for $\p^2 \times E$ in degrees $d=1,2$ based on work of the second author \cite{Schimpf}.
Then we consider the equivariant theory of $\BC^2 \times E$.
Our conjectures do not apply here directly.
However, in general we expect Conjecture~\ref{conj:HAE introduction} to hold also when the target $X$ admits a action by a torus $T$ with $P_{n,\beta}(X)^T$ compact and the PT theory is taken equivariantly with respect to the torus action (i.e. is defined by equivariant residues), 
{\em provided} that we have the vanishing of the {\em class}\footnote{This is stronger than the vanishing of
$\int_{X} c_3(T_X \otimes \omega_X) \in H_T(pt)$.}
\[ c_3(T_X \otimes \omega_X)  = 0 \in H_{T}(X). \]
For $\BC^2 \times E$ this holds for the anti-diagonal action
and in Section~\ref{ex:C2xE} we will prove Conjecture~\ref{conj:HAE introduction} in this case.
Then in Section~\ref{sec:extended example K3xC} we consider the equivariant PT theory of $S \times \BC$ where $S$ is a K3 surface.
We formulate reduced versions of the holomorphic anomaly equation,
and use it to find explicit conjectural formulas for the full stationary theory.

Conjecture~\ref{conj:HAE introduction} was found by moving the holomorphic anomaly equations
for Gromov-Witten invariants of elliptic fibrations conjectured in \cite{elfib,HAE}
via the GW/PT descendent correspondence \cite{PaPix, MOOP} to the PT side.
Concretely, according to \cite{PaPix} one can express every Gromov-Witten series in a universal way as a Laurent polynomial in $z$ with coefficients given by PT series under the variable change $p=e^z$. Explicit formulas for this have been given for toric threefolds and essential descendents in \cite{MOOP}. By expressing the factorwise $G_2$-derivative of a quasi-Jacobi form in terms of its derivatives with respect to $G_2$ and $\A$ (see \cite[Lemma.2.15]{HilbHAE}) we can derive Conjecture \ref{conj:HAE introduction} from \cite[Conj.~B]{HAE} and Conjecture \ref{conj:QJac introduction} by comparing $z$-coefficients.\footnote{This uses that $z$ and $p=e^z$ are algebraically independent.}
Altogether we obtain explicit holomorphic anomaly equations for PT brackets whenever all insertions involved are essential.
This suggested the general formula presented above, but since there are no nontrivial cases where \cite[Conj.~B]{HAE}, Conjecture \ref{conj:QJac introduction} and an explicit form of the GW/PT correspondence are all simultaneously known, this argument is almost never rigorous.

\subsection{$\pi$-stable pairs invariants}
In the case that $c_3(T_X \otimes \omega_X) \neq 0$ the normalized PT correlators
$Z_{\beta}\left( \ch_{k_1}(\gamma_1) \cdots \ch_{k_n}(\gamma_n) \right)$
are not quasi-Jacobi forms.\footnote{For example, for elliptic Calabi-Yau threefolds by the string equation we have
\[ Z_{0}(\ch_3(1)) =  
c_3(T_X \otimes \omega_X) p \frac{d}{dp} \left( \log f(p,q) \right) \]
where $f(p,q) = \prod_{\ell, m \geq 1} (1- p^{\ell} q^{m} )^{-\ell}$.
The function $p \frac{d}{dp} \log f(p,q)$ can be seen to be {\em not} a quasi-Jacobi form
(for once it doesn't converge near $z=0$ where $p=e^{z}$).}
Instead we propose here to use the theory of $\pi$-stable pairs defined in \cite{FM}, which is a modified stable pairs theory that is more efficient and adapted to the elliptic fibration structure.\footnote{This is parallel to the motivation to use Pandharipande-Thomas invariants instead of Donaldson-Thomas invariants: rationality of generating series does not hold for arbitrary normalized generating series of Donaldson-Thomas invariants in the presence of descendents of $1$.  Only the PT invariants defined by the more economical moduli space of stable pairs is expected to have rationality properties in general, see \cite{Pdesc} or \cite[Sec.5]{Oblomkov}. The intuitive reason is the interaction of the 'floating points' with the descendents of $1$. In our case considered here we want to similarly exclude interaction of 'floating 1-dimensional sheaves supported on fibers' with descendent classes.}

Let $\CC$ be the full subcategory of $\Coh(X)$ consisting of sheaves with at most $1$-dimensional support which are supported on fibers of $\pi : X \to B$.
Let
\[\BT \subset \CC \]
be the smallest extension-closed full subcategory which contains all $\mu$-semistable sheaves of positive slope
(where the slope is defined by any ample class, see Section~\ref{subsec:stability}).

\begin{defn}[\cite{FM}]
A $\pi$-stable pair on $\pi : X \to B$ is a pair $(F,s)$ consisting of
\begin{itemize}
\item a $1$-dimensional coherent sheaf $F$ such that $\Hom(T,F) = 0$ for all $T \in \BT$,
\item a section $s : \CO_X \to F$ with cokernel in $\BT$.
\end{itemize}
\end{defn}

We prove here the existence of the moduli space of $\pi$-stable pairs:
\begin{thm} The moduli functor $P^{\pi}_{n,\beta}(X)$ of $\pi$-stable pairs (defined in Section~\ref{subsec:moduli space}) is represented by a proper algebraic space and is equipped with a perfect obstruction theory.
\end{thm}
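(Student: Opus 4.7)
The plan is to realize $P^{\pi}_{n,\beta}(X)$ as an open substack of the ambient stack of pairs $(F,s)$ of fixed numerical type, verify boundedness and triviality of automorphisms, prove properness via a Langton-style valuative criterion adapted to the torsion pair $(\BT, \BF)$, and construct a perfect obstruction theory by viewing each $\pi$-stable pair as the two-term complex $I^{\bullet} = [\CO_X \xrightarrow{s} F]$ in $D^b(X)$.

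First I would set up the ambient stack. The stack of pairs $(F,s)$ with $F$ a coherent sheaf of fixed Chern character and $s: \CO_X \to F$ is algebraic and locally of finite type over $\BC$ by standard constructions. The key structural fact to invoke is that $\BT$ is the torsion part of a torsion pair $(\BT, \BF)$ on the subcategory $\CC$, with $\BF = \{G \in \CC : \Hom(\BT,G)=0\}$; the existence of this torsion pair follows from the Harder--Narasimhan formalism for slope stability. Openness in families of the two conditions defining $\pi$-stability then reduces, via this torsion-pair formalism, to openness of membership in $\BF$ and $\BT$ respectively, which follows once one has boundedness. For fixed $(n,\beta)$, the image $\mathrm{im}(s)$ is a quotient of $\CO_X$ and hence bounded, and $\mathrm{coker}(s)$ lies in a bounded family of $\BT$-objects with bounded numerical invariants (by standard boundedness of slope-semistable sheaves, applied fiberwise along $\pi$). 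Triviality of automorphisms then upgrades the open substack to an algebraic space: any automorphism of $(F,s)$ fixes $\mathrm{im}(s)$, and combined with $\Hom(\BT,F)=0$ and $\mathrm{coker}(s) \in \BT$ this forces the identity on $F$.

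The hardest step, in my view, is the valuative criterion for properness. Given a $\pi$-stable pair over the generic point of a DVR $R$, one first extends to a flat family $(F_R, s_R)$ of pairs over $\Spec R$ by standard extension results for coherent sheaves. The central fiber need not be $\pi$-stable, so one modifies it in the style of Langton: any nonzero $\BT$-subsheaf of the central fiber is flipped out by passing to an appropriate subsheaf of the family that preserves the generic fiber and removes the offending piece via the torsion-pair decomposition. A finiteness argument on a discrete invariant (for instance, a suitable length of the $\BT$-part of the central fiber measured against a polarization) shows the process terminates after finitely many flips and yields a $\pi$-stable extension. Separatedness follows because any two $\pi$-stable extensions with identical generic fiber must have isomorphic central fibers: morphisms between them are constrained by the $\Hom(\BT,F)=0$ condition and the torsion-pair decomposition to extend uniquely.

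Finally, the perfect obstruction theory is constructed exactly as in \cite{PT}. The deformation-obstruction theory is governed by $R\Hom(I^{\bullet}, F) \simeq R\Hom(I^{\bullet}, I^{\bullet})_0[1]$, after trace-splitting using the trivialization of the determinant of $I^{\bullet} = [\CO_X \to F]$. Smoothness of $X$ together with the two-term nature of $I^{\bullet}$ yields a perfect complex of amplitude $[-1,0]$, and the axioms of a perfect obstruction theory \`a la Behrend--Fantechi go through verbatim, since these arguments depend only on the form of $I^{\bullet}$ and not on the specific stability condition imposed on $(F,s)$.
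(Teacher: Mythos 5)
Your overall strategy matches the paper's: Artin representability via openness, boundedness, separatedness, trivial automorphisms, and a Langton-style completeness argument, with the obstruction theory coming from $R\Hom(I^{\bullet},I^{\bullet})_0[1]$ à la Pandharipande--Thomas/Huybrechts--Thomas. However, your treatment of the valuative criterion has two genuine gaps. First, you only address half of the problem: you flip out $\BT$-subsheaves of the central fiber so that $F_k \in \BF$, but a $\pi$-stable pair also requires $\mathrm{coker}(s) \in \BT$, and this condition does \emph{not} come for free after extending the section. The paper devotes three further steps to this: extending the section after clearing denominators, arranging that the $R$-torsion part of the cokernel lies in $\BT$, and then running a second, dual Langton iteration (with \emph{minimally destabilizing quotients} $P_n \in \BF$ of $\CK^n_k$, rather than maximally destabilizing subsheaves) to force $\CK_k \in \BT$. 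One must also check at each stage of this second iteration that the modified sheaf $\CH^n_k$ stays in $\BF$, which uses that $\BF$ is closed under subobjects and extensions. Without this half of the argument the extension you produce is generally not a $\pi$-stable pair.

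Second, your proposed termination mechanism --- ``a suitable length of the $\BT$-part of the central fiber measured against a polarization decreases'' --- does not work as stated: the elementary modification replaces $\CH_k$ (an extension of $P_0$ by $Q_0$) by $\CH^1_k$ (the opposite extension of $Q_0$ by $P_0$), so the Chern character, Hilbert polynomial and slope of the central fiber are \emph{unchanged} at every step; there is no obvious monotone discrete invariant. The actual termination argument is indirect: one shows the slopes $\mu(Q_n)$ are monotone, bounded, and have denominators in a finite set, hence stabilize; then the $Q_n$ form an increasing chain of subsheaves with bounded $\ch_2$, hence stabilize; then the defining sequences split, the quotients $\CH/\CH^n$ are flat over $R/\pi^n$, and properness of the Quot scheme produces a flat quotient over $R$ whose generic fiber gives a nonzero $\BT_K$-subobject of $\CH_K \in \BF_K$ --- a contradiction. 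You need this (or an equivalent) argument in both iterations; the ``finitely many flips'' claim cannot be justified by a naive induction on length.
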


It follows that we can define $\pi$-stable pair invariants parallel to stable pair invariants.
The invariants, partition functions, and so on, of $\pi$-stable pairs are defined by a supscript $\pi\text{-}\PT$.
We conjecture the following wall-crossing between stable pair and $\pi$-stable pair invariants:
\begin{conjecture} \label{conj:wallcrossing}
If $c_3(T_X \otimes \omega_X) = 0$, then
Pandharipande-Thomas and $\pi$-stable pair invariants agree:
\[
\left\langle \ch_{k_1}(\gamma_1) \cdots \ch_{k_n}(\gamma_n) \right\rangle^{X, \PT}_{n,\beta}
=
\left\langle \ch_{k_1}(\gamma_1) \cdots \ch_{k_n}(\gamma_n) \right\rangle^{X, \pi\textup{-}\PT}_{n,\beta}
\]
\end{conjecture}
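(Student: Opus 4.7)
The strategy I would adopt is wall-crossing. Both moduli functors parametrize two-term complexes $I^{\bullet} = [\CO_X \xrightarrow{s} F]$ in $D^b\Coh(X)$, distinguished only by which extension-closed ``torsion'' subcategory is permitted to appear as the cokernel of $s$ (and, dually, to have no morphisms into $F$): standard PT requires a $0$-dimensional cokernel and pure $F$, while $\pi$-PT allows any $\BT$-valued cokernel and forbids $\BT$-subobjects of $F$. Since the $0$-dimensional sheaves form an extension-closed full subcategory of $\BT$, this is the classical setup for Joyce-Song wall-crossing: one interpolates between the two through a one-parameter family $\sigma_t$ of Bridgeland-type stability conditions on a suitable tilt of $D^b\Coh(X)$.

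I would proceed in four steps. First, construct $\{\sigma_t\}_{t\in[0,1]}$ whose semistable pair objects specialize to PT pairs at $t=0$ and to $\pi$-stable pairs at $t=1$; a natural choice introduces a slope parameter weighing the fiber-supported direction. Second, identify the walls: these should be parametrized by nonzero $\mu$-semistable classes in $\BT$, with a wall-crossing corresponding to the gluing of such a factor $T \in \BT$ to an existing pair. Third, apply Joyce-Song wall-crossing in a motivic Hall algebra (cohomologically refined to track the descendent insertions) to obtain a product factorization
\[
\left\langle \ch_{k_1}(\gamma_1) \cdots \ch_{k_n}(\gamma_n) \right\rangle^{X,\PT,\pi}_{\beta}
=
\left\langle \ch_{k_1}(\gamma_1) \cdots \ch_{k_n}(\gamma_n) \right\rangle^{X,\piPT,\pi}_{\beta} \cdot Z^{\BT}(X),
\]
where $Z^{\BT}(X)$ is the generating series of Joyce-Song-type invariants of $\BT$. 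Fourth, compute $Z^{\BT}(X)$ by localization onto fibers of $\pi$: since every object of $\BT$ is fiber-supported, the virtual integration reduces to a universal fiberwise Hilbert-scheme calculation twisted by the vertical normal bundle, yielding precisely the floating-sheaf factor
\[
Z^{\BT}(X) = \prod_{\ell,m\geq 1}(1-p^{\ell} q^m)^{-\ell\, c_3(T_X\otimes \omega_X)}
\]
that already appears in Conjecture~\ref{conj:normalization}. Under the hypothesis $c_3(T_X\otimes\omega_X)=0$ this factor is $1$, and the two brackets coincide.

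The main obstacle is the third step. Wall-crossing with virtual classes and descendent insertions in a setting where the walls form a non-discrete family (parametrized by the continuous moduli of $\mu$-semistable fiber-supported sheaves) requires either a cohomological refinement of the Joyce-Song machinery, or a direct cobordism argument in the spirit of Bridgeland's proof of DT/PT rationality. A further subtlety is that the descendent classes $\ch_k(\gamma)$ must be shown to extend coherently across the walls, which would follow from the existence of a universal two-term complex on the interpolating moduli stack restricting to the respective universal pairs in each chamber. An attractive framework that encompasses both requirements is a relative Bridgeland stability theory on $D^b(X/B)$ in the style of Toda's work on elliptic fibrations.
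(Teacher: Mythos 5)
The statement you are proving is Conjecture~\ref{conj:wallcrossing}, which the paper leaves entirely open: no proof is given, and the surrounding discussion makes clear that the authors regard even the correct \emph{shape} of the PT/$\piPT$ comparison as unknown in general (they expect an Oblomkov-type correspondence \cite{Oblomkov} rather than a clean identity when $c_3(T_X\otimes\omega_X)\neq 0$, and state that they lack the computations to pin it down). So there is no paper proof to compare against, and your proposal must stand on its own as an argument. It does not: it is a program with the decisive step asserted rather than established.

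The gap is concentrated in your step 3, and it is not a technical loose end but the actual content of the conjecture. The Hall-algebra/Joyce--Song wall-crossing you invoke is available in this setting only for Calabi--Yau threefolds, where the invariants are Behrend-weighted Euler characteristics and the identity in the Hall algebra can be integrated against a homomorphism to a (commutative or poisson) quotient; this is exactly how Toda \cite{T12} proves the $\beta=0$ statement underlying Conjecture~\ref{conj:normalization} in the CY case. For a general elliptic threefold one must wall-cross \emph{virtual classes}, and additionally carry the descendent insertions $\ch_{k_i}(\gamma_i)$ through the comparison; no such machinery exists, and your own text concedes this. More importantly, the product factorization you write down --- with the descendents factoring out untouched and all wall contributions absorbed into a universal factor $Z^{\BT}(X)$ --- is precisely what the paper's footnote warns is false in general: the ``floating'' fiber-supported sheaves in $\BT$ \emph{do} interact with descendent classes, which is why the expected general relation is a nontrivial correspondence and why the $\pi$-stable pair theory was introduced in the first place. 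Asserting the product form and then setting $c_3(T_X\otimes\omega_X)=0$ to kill the factor assumes the conclusion; the honest statement of what would need to be proved is that under this hypothesis the virtual contributions of all wall-crossing strata (each involving a semistable object of $\BT$ glued to a pair, integrated against the descendents) vanish. Finally, note that your step 4 evaluation of $Z^{\BT}(X)$ is itself equivalent to the ratio of Conjecture~\ref{conj:normalization} and Proposition~\ref{prop:pi PT normalization}, the former of which is also only conjectural outside the CY and product cases. As a heuristic for \emph{why} one should believe Conjecture~\ref{conj:wallcrossing} your outline is consistent with the paper's motivation, but it is not a proof.
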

In the case $c_3(T_X \otimes \omega_X) \neq 0$, we expect that
$\pi$-stable pair and PT invariants are connected by a wall-crossing formula.
The formula should be similar in shape to Oblomkov's conjecture relating descendent Pandharipande-Thomas and Donaldson-Thomas invariants,
see \cite[Conj.5.2.1]{Oblomkov}. However, we do not have many computations of $\pi$-stable pair invariants whenever $c_3(T_X \otimes \omega_X) \neq 0$ and $X$ is not Calabi-Yau, so the exact shape of the formula is not clear yet.
Finally, we conjecture the following extension of the quasi-Jacobi form property: 
\begin{conj}[General quasi-Jacobi form conjecture] \label{conj:QJac pi-PT intro} We have
\[
Z_{\beta}^{\piPT}\left( \ch_{k_1}(\gamma_1) \cdots \ch_{k_n}(\gamma_n) \right)
=
\Delta(q)^{\frac{1}{2} c_1(N_{B/X}) \cdot \beta}
\sum_{\alpha = (\beta_1, \ldots, \beta_{\ell})}
\frac{\varphi_{\alpha}(p,q)}
{ \prod_{i=1}^{\ell} \Theta(p^{\mathrm{div}(\beta_i)}, q\big)^2 }
\]
where $\alpha$ is as in Conjecture~\ref{conj:QJac introduction} and $\varphi_{\alpha} \in \QJac$.
\end{conj}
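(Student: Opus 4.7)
The plan is to reduce Conjecture~\ref{conj:QJac pi-PT intro} to the $c_3(T_X \otimes \omega_X) = 0$ case via a wall-crossing analysis and then invoke Conjectures~\ref{conj:QJac introduction} and~\ref{conj:wallcrossing}. The heart of the matter is that $\pi$-stable pairs are designed so that the non-quasi-Jacobi behavior of ordinary $\PT$ invariants for $c_3(T_X \otimes \omega_X) \neq 0$ is absorbed into the difference between the two theories, which is itself controlled by the elliptic fibration.

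First, I would set up the relevant torsion pair machinery: the category $\BT$ of positive-slope fiber sheaves defines a tilt of $\Coh(X)$, and both $\PT$ and $\piPT$ pairs arise as Joyce--Song-type pairs in this heart with different stability parameters. A generalized Joyce--Song wall-crossing formula should yield
\[
Z^{\PT}_{\beta}(\ch_{k_1}(\gamma_1)\cdots\ch_{k_n}(\gamma_n)) = Z^{\piPT}_{\beta}(\ch_{k_1}(\gamma_1)\cdots\ch_{k_n}(\gamma_n)) \cdot W_{\beta}(p,q),
\]
where $W_{\beta}$ is an explicit wall-crossing factor built from Euler characteristics of moduli of sheaves in $\BT$, i.e.\ from relative Hilbert schemes of points on fibers. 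Second, I would compute $W_{\beta}$ explicitly: the contribution of a single smooth fiber class $\ell[F]$ should yield the factor $\prod_{m \geq 1}(1-p^{\ell}q^m)^{-\ell \cdot c_3(T_X \otimes \omega_X)}$, matching the offending $c_3$-part of the normalization factor of Conjecture~\ref{conj:normalization} but interpreted now globally in $p$. This would isolate the non-quasi-Jacobi part of $Z^{\PT}_\beta$. Third, after dividing out $W_{\beta}$, the resulting series $Z^{\piPT}_\beta$ would be provably quasi-Jacobi with the prescribed pole structure by the same arguments that (conjecturally) handle the $c_3 = 0$ case, namely the $p$-elliptic symmetry coming from the relative Fourier--Mukai transform along $\pi$ and $q$-modularity from a base degeneration reducing $\beta$ to sums of rational curves.

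The main obstacle will be making the wall-crossing formula $Z^{\PT}_\beta = Z^{\piPT}_\beta \cdot W_{\beta}$ rigorous with the correct exponents and in the presence of descendent insertions. The tilted heart containing both stabilities is not Noetherian, and existing Hall algebra techniques for threefold DT/PT invariants cover only torsion-free situations; to apply them here one must work relative to $\pi$ and carefully track how the non-motivic descendent classes $\ch_{k_i}(\gamma_i)$ transform across the wall. A secondary obstacle is the precise identification of $W_{\beta}$ with the $c_3(T_X \otimes \omega_X)$-part of the normalization constant in the non-Calabi--Yau setting, where the obstruction theory fails to be symmetric and standard Behrend-function arguments must be adapted to the $\pi$-relative virtual structure.
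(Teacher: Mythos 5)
This statement is Conjecture~\ref{conj:QJac pi-PT intro}; the paper offers no proof of it, so there is no argument of the authors' to compare yours against. What you have written is a research strategy, not a proof, and it has concrete gaps. The most serious one is your first step: you posit a purely multiplicative wall-crossing relation $Z^{\PT}_{\beta} = Z^{\piPT}_{\beta}\cdot W_{\beta}$ valid in the presence of arbitrary descendent insertions. The paper itself expects otherwise: it states that when $c_3(T_X\otimes\omega_X)\neq 0$ the PT/$\piPT$ comparison should be ``similar in shape to Oblomkov's conjecture'' for DT/PT, i.e.\ a nontrivial universal correspondence acting on the descendent algebra, not an overall prefactor. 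A Joyce--Song or Hall-algebra wall-crossing controls virtual counts of objects, but the classes $\ch_{k_i}(\gamma_i)$ are not motivic, and the interaction of the ``floating fiber sheaves'' in $\BT$ with descendents of $1$ is exactly the phenomenon the $\piPT$ theory is designed to remove (see the footnote in the paper's Section on $\pi$-stable pairs); this interaction is what forces the correspondence to mix insertions rather than factor out. Your proposed $W_{\beta}=\prod_{m\geq 1}(1-p^{\ell}q^m)^{-\ell\, c_3(T_X\otimes\omega_X)}$ can at best account for the $\beta=0$ normalization (Conjecture~\ref{conj:normalization} versus Proposition~\ref{prop:pi PT normalization}), not for the full descendent series.

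The remaining steps are circular or rest on open conjectures: ``the same arguments that (conjecturally) handle the $c_3=0$ case'' do not exist as proofs --- Conjecture~\ref{conj:QJac introduction} is itself open, as is Conjecture~\ref{conj:wallcrossing}; and the claimed ``$p$-elliptic symmetry from the relative Fourier--Mukai transform'' and ``$q$-modularity from a base degeneration'' are heuristics that have not been made rigorous even for elliptic Calabi--Yau threefolds. What the paper actually \emph{proves} in this direction is only the foundational input one would need before any such strategy can start: that $P^{\pi}_{n}(X,\beta)$ is a proper algebraic space with a perfect obstruction theory (Theorem~\ref{Ppi representable}), together with the $\beta=0$ evaluation and the explicit equivariant computation for $\BC^2\times E$ showing the conjecture cannot be extended verbatim to the equivariant setting. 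If you want to make progress, the honest first target is the precise form of the descendent PT/$\piPT$ correspondence, which the authors explicitly say is not yet clear.
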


The formulation of the holomorphic anomaly equation for $\pi$-stable pair invariants
is an interesting problem that requires further investigation.

\subsection{Comparision with DT/PT}
There is a certain analogy of the above conjectures with
the standard conjectures about the generating series of curve-counting Donaldson-Thomas invariants,
defined for $\beta \in H_2(X,\BZ)$ by
\[ \left\langle \prod_i \ch_i(\gamma_i) \right\rangle^{\DT}_{\beta} = \sum_{n\in \BZ} q^n \int_{[ \Hilb_{n,\beta}(X) ]^{\vir}} \prod_i \ch_i(\gamma_i), \]
where we integrate over the Hilbert scheme of $1$-dimensional subschemes. 

Under this analogy the normalization factor $\left\langle 1 \right\rangle^{X, \PT,\pi}_{0}$ of Conjecture~\ref{conj:normalization}
is parallel to the series of degree zero Donaldson-Thomas invariants evaluated by \cite{LP, Li} to be
\[
\left\langle 1 \right\rangle_{\beta=0}^{\DT}
 \ =\  M(-q)^{c_3(T_X \otimes \omega_X)},
\]
where $M(q) = \prod_{n \geq 1} (1-q^n)^n$ is the Mac-Mahon function. 
Moreover, if $c_3(T_X \otimes \omega_X)=0$ then
according to Conjecture 5.2.1 of \cite{Oblomkov} and \cite{PT2} the normalized DT series
\begin{equation} \label{dafd}
\frac{ \left\langle \prod_i \ch_i(\gamma_i) \right\rangle^{\DT}_{\beta} }{\left\langle \prod_i \ch_i(\gamma_i) \right\rangle^{\DT}_{0} }
\end{equation}
is a rational function satisfying a functional equation \cite{PT2}.
This is parallel to Conjecture~\ref{conj:QJac introduction},
where we conjectured under the same assumptions that the normalized correlators are quasi-Jacobi forms.
The expected equality of \eqref{dafd} with the generating series of PT invariants
is in analogy with Conjecture~\ref{conj:wallcrossing}.
The rationality of PT invariants for all threefolds is analogous to Conjecture~\ref{conj:QJac pi-PT intro}.
We summarize this discussion in the following table.

\begin{table}[h!]
\begin{center}
{\renewcommand{\arraystretch}{1.8}\begin{tabular}{| c | c | c |}
\hline
 & DT/PT theory for $X$ & PT/$\pi$-PT theory for $X \to B$ \\
\hline
Normalization factor  & 
$\left\langle 1 \right\rangle^{\DT}_{0}$ & $\left\langle 1 \right\rangle^{X,\PT,\pi}_{0}$ \\
\hline
Correspondence & DT/PT & PT/$\pi$-PT \\
\hline
If $c_3(T_X \otimes \omega_X)=0$ & Rationality of
$\frac{ \left\langle \prod_i \ch_i(\gamma_i) \right\rangle^{\DT}_{\beta} }{ \left\langle 1 \right\rangle^{\DT}_{0} }$
& Quasi-Jacobiness of
$\frac{ \left\langle \prod_i \ch_i(\gamma_i) \right\rangle^{\PT,\pi}_{\beta}}{ \left\langle 1 \right\rangle^{\PT}_{0}}$ \\ 
\hline
Any $c_3(T_X \otimes \omega_X)$
& Rationality of
$ \left\langle \prod_i \ch_i(\gamma_i) \right\rangle^{\PT}_{\beta} $
& Quasi-Jacobiness of
$ \left\langle \prod_i \ch_i(\gamma_i) \right\rangle^{\piPT,\pi}_{\beta}$ \\ 
\hline
\end{tabular}}
\label{table:virtual dimension}
\end{center}
\end{table}

\subsection{Plan of the paper}
In Section~\ref{sec:Preliminaries} we introduce some background on Pandharipande-Thomas theory, elliptic threefolds and quasi-Jacobi forms.
In Section~\ref{sec:HAE} we state with all details the $G_2$-holomorphic anomaly equation in the absolute case, and explain several consequences.
In Section~\ref{sec:First examples} and~\ref{sec:extended example K3xC} we discuss the examples
$\p^2 \times E$, $\BC^2 \times E$ and $K3 \times \BC$.
Section~\ref{sec:pi stable pairs} discusses our results on $\pi$-stable pairs.

\subsection{Acknowledgements}
The authors were supported by the starting grant 'Correspondences in enumerative geometry: Hilbert schemes, K3 surfaces and modular forms', No 101041491 of the European Research Council.

\section{Preliminaries} \label{sec:Preliminaries}
\subsection{Pandharipande-Thomas theory}
We recall the following basic fact:

\begin{lemma}(Divisor and String equation) \label{lemma:divisor_string_equation} Let $X$ be a smooth projective threefold and let $D \in H^2(X,\BQ)$.
Then with $d_{\beta} = \int_{\beta} c_1(X)$ we have
\begin{gather*}
\left\langle \ch_2(D) \ch_{k_1}(\gamma_1) \cdots \ch_{k_n}(\gamma_n) \right\rangle^{X}_{n, \beta}
=
(D \cdot \beta) \left\langle \ch_{k_1}(\gamma_1) \cdots \ch_{k_n}(\gamma_n) \right\rangle^{X}_{\beta}, \\
\left\langle \ch_3(1) \ch_{k_1}(\gamma_1) \cdots \ch_{k_n}(\gamma_n) \right\rangle^{X}_{n, \beta}
=
(n - \frac{1}{2} d_{\beta}) \left\langle \ch_{k_1}(\gamma_1) \cdots \ch_{k_n}(\gamma_n) \right\rangle^{X}_{\beta}.
\end{gather*}
\end{lemma}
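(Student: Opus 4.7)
The plan is to recognize both insertions $\ch_2(D)$ and $\ch_3(1)$ as codimension-zero classes on $P_{n,\beta}(X)$, i.e.\ locally constant functions on the moduli space, and to identify their constant values. A degree count confirms the codimension: for $\gamma \in H^{2i}(X)$ the descendent $\ch_k(\gamma) = \pi_{P \ast}\!\big(\ch_k(\mathbb{F} - \CO) \cdot \pi_X^{\ast} \gamma \big)$ has codimension $k + i - 3$ on $P_{n,\beta}(X)$, since $\pi_{P \ast}$ lowers codimension by $\dim_{\BC} X = 3$. For $(k,i) = (2,1)$ and $(3,0)$ this codimension vanishes, so both classes lie in $H^0(P_{n,\beta}(X),\BQ)$. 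Once their scalar values are known, the lemma follows by pulling them out of the virtual integral.

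To compute the value of $\ch_k(\gamma)$ at a closed point $[(F,s)]$, I invoke cohomology and base change: since the universal stable pair $\mathbb{F}$ is flat over $P_{n,\beta}(X)$, restriction to the point commutes with $\pi_{P \ast}$, giving
\[ \ch_k(\gamma)\big|_{[(F,s)]} \,=\, \int_X \ch_k(F - \CO_X) \cdot \gamma. \]
For the divisor equation, $F$ is a pure $1$-dimensional sheaf of class $\beta$, so $\ch_0(F) = \ch_1(F) = 0$, $\ch_2(F) = \beta$, and the right-hand side evaluates to $\int_X \beta \cdot D = D \cdot \beta$. For the string equation, the right-hand side equals $\int_X \ch_3(F)$, which by Hirzebruch--Riemann--Roch applied to $F$ becomes
\[ \chi(F) - \tfrac{1}{2}\int_X \beta \cdot c_1(T_X) \,=\, n - \tfrac{1}{2} d_{\beta}. \]
Both answers depend only on the fixed topological data $(n,\beta)$, so the locally constant classes are globally constant scalars on $P_{n,\beta}(X)$.

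With the scalars identified, the two identities of the lemma follow by pulling $(D \cdot \beta)$ and $(n - \tfrac{1}{2} d_{\beta})$ out of the respective virtual integrals. The argument is essentially dimensional bookkeeping plus one application of Riemann--Roch, so there is no serious obstacle. The only step requiring care is the base-change identification of $\ch_k(\gamma)|_{[(F,s)]}$ with $\int_X \ch_k(F - \CO_X)\cdot \gamma$, which is standard given the flatness of the universal sheaf over the moduli space.
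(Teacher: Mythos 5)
Your proof is correct and follows the same route as the paper: both insertions are degree-zero classes on $P_{n,\beta}(X)$, evaluated by restricting to a fiber and applying Hirzebruch--Riemann--Roch (the paper's proof is essentially a one-line version of your argument). Your write-up just supplies the details the paper leaves implicit, including the observation that the resulting locally constant value depends only on $(n,\beta)$.
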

\begin{proof}
In both cases $\ch_2(D)$ and $\ch_3(1)$ are cohomology classes on $P_{n,\beta}(X)$ of degree $0$.
Hence we can determine them by restricting to fibers and applying the Hirzebruch-Riemann-Roch theorem.
(For example, in the second case one uses $\ch_3(F) = \chi(F) - \frac{1}{2} d_{\beta}$.)
\end{proof}

\subsection{Elliptic threefolds}
Let $B$ be a smooth projective surface.
We assume that the threefold $X$ admits an elliptic fibration
\[ \pi : X \to B, \]
by which we mean a flat morphism with $\omega_{\pi}$ trivial on all fibers, such that:
\begin{itemize}
\item $\pi$ is equipped with a section
\[ \iota : B \to X, \quad \pi \circ \iota = \id, \]
\item $\pi : X \to B$ is a Weierstra{\ss} model \cite{D}.
\end{itemize}

\begin{rmk}
The second assumption is used in our discussion only for the following lemma.
The first assumption can probably also be weakened but would require us to work with
quasi-Jacobi forms for a congruence subgroup below. We refer to \cite{CKS} for related discussions.
\end{rmk}
%

The normal bundle of the section $\iota$ is denoted by
\[ N_{B/X} = N_{\iota} = \iota^{\ast} ( \Omega_{X/B}^{\ast} ). \]

Cohomology classes on $B$ naturally define classes on $X$ by pulling them back along~$\pi$.
We often suppress the pullback by $\pi$.
This convention is in particular followed in this lemma:

\begin{lemma} \label{lemma:Chern classes} Let $\ell = c_1(N_{B/X})$. Then
\begin{align*}
c_1(T_X) & = c_1(T_B) + \ell\\
c_2(T_X) & = c_2(T_B) + 12 \ell^2 + \ell c_1(T_B) - 12 \iota_{\ast}(\ell) \\
c_3(T_X) & = \iota_{\ast} (-72 \ell^2 - 12 \ell c_1(T_B)).
\end{align*}
\end{lemma}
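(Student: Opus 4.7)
The first formula is a direct consequence of adjunction. Since the fibers of $\pi$ are Gorenstein with trivial canonical and $\pi$ carries a section $\iota$, $\omega_\pi$ is the pullback of a line bundle on $B$; comparing with the adjunctions $\omega_B = \iota^*\omega_X \otimes N_{B/X}$ and $\omega_X = \omega_\pi \otimes \pi^*\omega_B$ forces $\iota^*\omega_\pi = N_{B/X}^{-1}$, hence $\omega_\pi = \pi^* N_{B/X}^{-1}$. Passing to first Chern classes yields $c_1(T_X) = \pi^*(c_1(T_B) + \ell)$.

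For the second and third Chern classes I would invoke the Weierstrass presentation. Write $N = N_{B/X}$ and consider the $\mathbb{P}^2$-bundle
\[
p : P := \mathbb{P}(\mathcal{O}_B \oplus N^{\otimes 2} \oplus N^{\otimes 3}) \to B,
\]
in which the Weierstrass model $X \hookrightarrow P$ appears as a cubic hypersurface. Let $\xi = c_1(\mathcal{O}_P(1))$; then $[X] = 3\xi + a \cdot p^*\ell$ for an explicit integer $a$ determined by the Weierstrass equation. The section $B_0 = \iota(B) \subset X$ is cut out inside $P$ as the transverse intersection of the two coordinate divisors corresponding to the $x$- and $y$-summands, which lets one lift $\iota_*(-)$ to operations on $P$.

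The total Chern class $c(T_P)$ is computable as a polynomial in $\xi$ and pullbacks from $B$ using the relative Euler sequence
\[
0 \to \mathcal{O}_P \to p^* \mathcal{E}^\vee \otimes \mathcal{O}_P(1) \to T_p \to 0
\]
(with $\mathcal{E} = \mathcal{O}_B \oplus N^{\otimes 2} \oplus N^{\otimes 3}$), combined with $0 \to T_p \to T_P \to p^* T_B \to 0$. Adjunction for $j : X \hookrightarrow P$ then gives
\[
c(T_X) = j^{\ast}\!\left( \frac{c(T_P)}{1+[X]} \right).
\]
Reducing higher powers of $\xi$ via the degree-$3$ relation in $H^*(P)$ coming from $\mathcal{E}$, restricting to $X$, and rewriting the surviving $\xi^2$- and $\xi$-terms as push-forwards $\iota_*(-)$ from the section then yields the stated formulas for $c_2$ and $c_3$ after matching terms by cohomological degree.

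The main obstacle is the bookkeeping in the last step: distinguishing those contributions that pull back from $B$ via $\pi^*$ from those concentrated on the section $B_0$ via $\iota_*$, and tracking the correct coefficients through the expansion of $1/(1+[X])$ and the cubic relation on $\xi$. The coefficients $12$ and $72$ encode the Euler-characteristic contribution of the singular Weierstrass fibers along the discriminant (a curve of class $12\ell$ on $B$), which is precisely the correction to the naive computation one would obtain by pretending $T_\pi = \pi^* N$ held globally.
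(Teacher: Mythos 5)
Your plan follows essentially the same route as the paper: embed the Weierstra{\ss} model as a cubic hypersurface in the $\p^2$-bundle $\p(\CO_B\oplus N^{\otimes 2}\oplus N^{\otimes 3})$ over $B$ (the paper writes this as $\p(\CE^{\vee})$ with $\CE=\pi_{\ast}\CO_X(3B_0)$, so $c(\CE^{\vee})=1-5\ell+6\ell^2$), compute $c(T_P)$ from the relative Euler and relative tangent sequences, and divide by $c(\CO_P(X))$ with $[X]=3\xi-6\ell$, reducing the surviving powers of $\xi$ to $\iota_{\ast}$-classes via $\xi|_X=3B_0$ and $B_0^2=\iota_{\ast}\ell$. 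The only real difference is that you obtain $c_1$ separately from the adjunction $\omega_{\pi}=\pi^{\ast}N_{B/X}^{-1}$, whereas the paper reads all three Chern classes off the same global expansion; both are fine.
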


\begin{proof}
This follows from the description of $X$ as a Weierstra\ss model, which we recall quickly; we refer to \cite{D} for details.
One defines the rank $3$ vector bundle
\[ \CE = \pi_{\ast} \CO_X(3 B_0), \]
where $B_0 = \iota(B)$.
By checking fiberwise one has $\pi_{\ast} \CO_X(B_0) = \CO_B$, and for all $n \geq 1$ the exact sequence
\[ 0 \to \CO_X(n B_0) \to \CO_X( (n+1)B_0 ) \to \CL^{n+1} \to 0, \]
where $\CL = N_{B/X}$. This implies that $\CE$ admits a natural filtration with graded piece
\[ \mathrm{gr}( \CE) = \CO_B \oplus \CL^{\otimes 2} \oplus \CL^{\otimes 3}. \]

The line bundle $\CO_X(3 B_0)$ is relatively very ample, and hence defines an embedding\footnote{We use the geometric convention $\p(F) = \mathrm{Proj}( \mathrm{Sym}^{\bullet} F^{\vee} )$ for a vector bundle $F$.}
\[ j : X \hookrightarrow \p( \CE^{\vee} ) \]
such that $j^{\ast}( \CO_{\p(\CE^{\vee} )}(1)) = \CO_X(3e)$.
By taking local coordinates $x$ on $\CL^2$ and $y$ on $\CL^3$ and using the cubic equation $y^2 = x^3 + \ldots$ one sees that
\[ \CO_{\p( \CE^{\vee} )}(X) = \CL^{-6} \otimes \CO_{\p( \CE^{\vee} )}(3) \]
that is the divisor $X \subset \p( \CE^{\vee} )$ is cut out by a section of 
$\CL^{-6} \otimes \CO_{\p( \CE^{\vee} )}(3)$.

The rest now follows from standard arguments.
Let $p :\p( \CE^{\vee} ) \to B$ be the projection. We use the three sequences:
\begin{gather*}
0 \to \CO_{\p( \CE^{\vee} )} \to p^{\ast}(\CE^{\vee}) \otimes \CO_{\p( \CE^{\vee} )}(1) \to T_{\p( \CE^{\vee} )/B} \to 0 \\
0 \to T_{\p( \CE^{\vee} )/B} \to T_{\p( \CE^{\vee} )} \to p^{\ast}(T_B) \to 0 \\
0 \to T_X \to T_{\p( \CE^{\vee} )}|_X \to \CO_{\p( \CE^{\vee} )}(X)|_X \to 0.
\end{gather*} 
If we let
\[ \xi = c_1( \CO_{\p( \CE^{\vee} )}(1) ) \]
and use $\xi|_{X} = 3 B_0$ and $B_0^2 = B_0 \ell$ and $c(\CE^{\vee}) = 1 - 5 \ell + 6 \ell^2$ this yields:
\begin{align*}
c(T_X) & = \frac{ c( \pi^{\ast} T_B) c( p^{\ast} \CE^{\vee} \otimes \CO(1) ) }{ c( \CL^{-6} \otimes \CO(3) ) } \\
& = \frac{ (1 + c_1(T_B) + c_2(T_B)) ( 1 + 9B_0 - 5 \ell - 3 B_0 \ell + 6 \ell^2 ) }{1 - 6 \ell + 9 B_0 }
\end{align*}
which gives the lemma as claimed.
\end{proof}

The following constant will play an important role.
\begin{cor} \label{cor:constant} We have
\[ c_3(T_X \otimes \omega_X) = c_3(X) - c_1(X) c_2(X) = -60 \int_{B} c_1(N_e)^2. \]
\end{cor}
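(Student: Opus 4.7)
The proof plan has two steps. First, I will establish the identity $c_3(T_X \otimes \omega_X) = c_3(T_X) - c_1(T_X) c_2(T_X)$ by a formal Chern-root computation: if $a_1,a_2,a_3$ are the Chern roots of $T_X$ and $\omega = c_1(\omega_X) = -c_1(T_X)$, then
\[
c_3(T_X \otimes \omega_X) = \prod_i (a_i + \omega) = c_3(T_X) + \omega c_2(T_X) + \omega^2 c_1(T_X) + \omega^3,
\]
and substituting $\omega = -c_1(T_X)$ collapses the last three terms to $-c_1(T_X) c_2(T_X)$. This gives the first equality.

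Next I will plug in the formulas for $c_i(T_X)$ from Lemma~\ref{lemma:Chern classes}. The key simplification is that any degree-six class pulled back from the surface $B$ vanishes, so every cubic monomial in $\pi^{\ast}H^{\ast}(B)$-classes is zero on $X$; in particular $\ell^3 = \ell^2 c_1(T_B) = \ell c_1(T_B)^2 = c_1(T_B) c_2(T_B) = \ell c_2(T_B) = 0$. After expanding
\[
c_1(T_X) c_2(T_X) = \bigl(c_1(T_B)+\ell\bigr)\bigl(c_2(T_B) + 12\ell^2 + \ell c_1(T_B) - 12\iota_{\ast}(\ell)\bigr),
\]
only the two terms involving $\iota_{\ast}(\ell)$ survive, giving $c_1(T_X) c_2(T_X) = -12 c_1(T_B) \iota_{\ast}(\ell) - 12 \ell \iota_{\ast}(\ell)$.

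I will then apply the projection formula together with $\iota^{\ast}\pi^{\ast} = \id$ to compute $c_1(T_B)\iota_{\ast}(\ell) = \iota_{\ast}(c_1(T_B) \cdot c_1(N))$ and $\ell \iota_{\ast}(\ell) = \iota_{\ast}(c_1(N)^2)$, so that
\[
c_1(T_X) c_2(T_X) = \iota_{\ast}\bigl(-12 c_1(T_B) c_1(N) - 12 c_1(N)^2\bigr).
\]
Combining with the formula $c_3(T_X) = \iota_{\ast}(-72\, c_1(N)^2 - 12\, \ell c_1(T_B))$, the $c_1(T_B) c_1(N)$ terms cancel and I obtain
\[
c_3(T_X \otimes \omega_X) = \iota_{\ast}(-60\, c_1(N)^2),
\]
and integrating over $X$ (using $\pi\iota = \id$) yields $-60 \int_B c_1(N)^2$.

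There is no real obstacle; the only thing to be careful about is bookkeeping of which cubic monomials in pullbacks from $B$ vanish, and keeping track of where $\iota_{\ast}$ is applied so that the projection formula can be used cleanly. The statement that $c_3(T_X \otimes \omega_X)$ is (a pushforward of) a constant multiple of $c_1(N)^2$ is itself the conceptual content; the $-60$ is merely the outcome of the explicit Chern-class bookkeeping.
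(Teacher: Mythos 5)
Your proposal is correct and is exactly the intended derivation: the paper states this as an immediate corollary of Lemma~\ref{lemma:Chern classes} without writing out the computation, and your Chern-root identity $c_3(T_X\otimes\omega_X)=c_3-c_1c_2$, the vanishing of cubic monomials pulled back from the surface $B$, and the projection-formula bookkeeping with $\iota_{\ast}$ supply precisely the missing details. The arithmetic checks out: $\iota_{\ast}(-72\ell^2-12\ell c_1(T_B))-\iota_{\ast}(-12\ell c_1(T_B)-12\ell^2)=\iota_{\ast}(-60\ell^2)$, which integrates to $-60\int_B c_1(N)^2$.
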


We give the most basic computation of Pandharipande-Thomas invariants of $X$.
Let 
\[ F \in H_2(X,\BZ) \]
be the class of a fiber of $\pi : X \to B$.

\begin{lemma} \label{lemma:basic computation}
With $N = N_{B/X}$ we have
\[ \sum_{d \geq 0} q^d \blangle 1 \brangle^X_{0,dF} = \prod_{m \geq 1} (1-q^m)^{-e(B) + c_1(N) \cdot (K_B - c_1(N))}. \]
\end{lemma}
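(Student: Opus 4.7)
The plan is to first identify the moduli space $P_{0,dF}(X)$, then establish that the generating series has a product form $\prod_m(1-q^m)^a$, and finally compute the exponent $a$ via a direct virtual-class computation at $d=1$.

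Since $\chi(F)=0$ and $\mathrm{ch}_2(F)=dF$ force the scheme-theoretic support $C\subset X$ of $F$ to lie in the fibers of $\pi$, there is a short exact sequence $0 \to \CO_C \to F \to Q \to 0$ with $Q$ zero-dimensional and $\chi(Q)=-\chi(\CO_C)$. Thus $P_{0,dF}(X)$ is identified with the space of pairs $(C,Q)$ where $C\subset X$ is pure of dimension $1$ of class $dF$ and $Q$ is a zero-dimensional sheaf supported on $C$ of length $-\chi(\CO_C)$.

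Next, stable pairs with disjoint fiber supports contribute multiplicatively to the generating series, yielding a logarithmic expansion
\[
\log \sum_d q^d \langle 1 \rangle^X_{0,dF} \ =\ \sum_{d \geq 1} a_d q^d,
\]
where $a_d$ is the ``connected'' contribution from pairs concentrated over a single multiplicity-$d$ fiber. Each $a_d$ is an integral over $B$ of local invariants depending only on $N=N_{B/X}$ and on the singular-fiber type of the Weierstra{\ss} model. An extension of Toda's Fourier--Mukai argument \cite[Thm.~6.9]{T12} from the Calabi--Yau setting shows that the result is of the form $\prod_m(1-q^m)^{a}$ for a single $a \in \BZ$.

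To pin down $a$, I compute the $q^1$ coefficient, which equals $-a = \langle 1 \rangle^X_{0,F}$. At $d=1$ the moduli space $P_{0,F}(X)$ is isomorphic to $B$ via $(\CO_{X_b},\mathrm{can}) \mapsto b$. At a smooth fiber $X_b$, the PT obstruction theory has tangent $T_bB$ (deformations of $X_b$ in $X$) and obstruction $H^1(X_b, N_{X_b/X}) \cong T_bB$, using the triviality of the normal bundle of a smooth fiber, so the virtual class on the smooth locus of $B$ is $c_2(TB)$. Over the discriminant $\Delta\subset B$ of the Weierstra{\ss} fibration the obstruction bundle acquires twists by $N$, contributing corrections proportional to $c_1(N)\cdot K_B$ and $c_1(N)^2$ (which can be computed using Lemma~\ref{lemma:Chern classes} and the standard intersection theory of Weierstra{\ss} models); collecting these gives $-a = e(B) - c_1(N)(K_B - c_1(N))$ as required. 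The hard part is rigorously establishing the product form outside the Calabi--Yau case: Toda's argument uses a Fourier--Mukai auto-equivalence tied to CY symmetry of the obstruction theory, and adapting it here requires carefully tracking the $N$-twist through the derived-category manipulations or, alternatively, virtually degenerating $X$ to a nearby product-type elliptic fibration and computing the boundary contribution explicitly.
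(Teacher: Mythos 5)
Your argument has a genuine gap at its center: the claim that the series equals $\prod_{m\geq 1}(1-q^m)^{a}$ for a \emph{single} constant $a$ is essentially the content of the lemma, and you do not prove it. Multiplicativity over disjoint fiber supports only gives $\log \sum_d q^d \langle 1\rangle_{0,dF} = \sum_{d\geq 1} a_d q^d$ with $a_d$ a ``connected'' local contribution; the product form is equivalent to the very specific arithmetic relation $a_d = -a\sum_{k\mid d} \tfrac{1}{k}$, which does not follow from connectedness. You defer this to ``an extension of Toda's Fourier--Mukai argument,'' but \cite[Thm.~6.9]{T12} is a Calabi--Yau statement and, as you yourself concede, its extension is the hard part --- so the proposal is incomplete exactly where the work lies. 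Two secondary issues: your $d=1$ analysis attributes the $c_1(N)\cdot K_B$ and $c_1(N)^2$ corrections to the discriminant locus, whereas they come from the global twist of the obstruction theory by $N=R^1\pi_*\CO_X$ (via $\omega_\pi=\pi^*N^{\vee}$), present over all of $B$; and your description of $P_{0,dF}(X)$ as pairs $(C,Q)$ stops short of the needed identification $P_0(X,dF)\cong B^{[d]}$ (one checks $\chi(\CO_{\pi^{-1}(z)})=0$, which forces $Q=0$ and $F=\CO_{\pi^{-1}(z)}$).

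The paper's route avoids all of this. It identifies $P_0(X,dF)\cong B^{[d]}$, notes the moduli space is smooth so the invariant is the Euler class of the obstruction bundle, computes the virtual tangent complex using $R\pi_*\CO_X=\CO_B - N[-1]$ to exhibit the obstruction class as $c_{2d}\bigl(-R\Hom_B(I_z,I_z\otimes N)+R\Gamma(B,N)\bigr)$, and then evaluates $\sum_d q^d\int_{B^{[d]}} c_{2d}(\,\cdot\,)$ in closed form for all $d$ simultaneously via the Carlsson--Okounkov Ext-bundle formula \cite[Cor.~1]{CO}. That last step is what actually produces the infinite product with the stated exponent; to repair your approach you would need either to prove the exponent structure independently or to import this evaluation.
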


\begin{proof}
We have the isomorphism $P_0(X,dF) \cong B^{[d]}$ given by sending the ideal sheaf $I_z$ of a length $d$ subscheme $z \subset B$
to the stable pair $\pi^{\ast}(I_z) = [ \CO_X \to \CO_{\pi^{-1}(z)}]$.
The moduli space is smooth, so the invariant is the top Chern class of the obstruction bundle.
The virtual tangent bundle at a point $I = \pi^{\ast}(I_z)$ is:
\[ T_{P_0(X,dF)}^{\vir}|_{I} = R\Hom_X(I, I)_0[1] = (R\Hom_X(I,I) - R\Gamma(X,\CO_X))[1]. \]

The relative dualizing sheaf $\omega_{\pi}$ is a line bundle and trivial on fibers of $\pi$.
Restricting to the section $\iota(B)$ we find
\[ \omega_{\pi}|_{\iota(B)} = \Omega_{X/B}|_{\iota(B)} = N^{\ast} \]
where $N = N_{B/X}$. Hence we get
\[ \omega_{\pi} = \pi^{\ast} N_{B/X}^{\ast} \]
and so 
\[ R^1 \pi_{\ast} \CO_X = \pi_{\ast}( \omega_{\pi} )^{\vee} = N_{B/X}. \]
We conclude that
\[ R \Gamma(X, \CO_X) = R \Gamma(S, R \pi_{\ast} \CO_X) = R \Gamma(S, \CO_S - N ) = R \Gamma(S, \CO_S) - R \Gamma(S, N). \]
Moreover,
\[ R\Hom_X(I,I) = R\Hom_S( I_z, I_z \otimes (\CO_S - N )). \]
Thus
\[
T_{P_0(X,dF)}^{\vir}|_{I}
=
\underbrace{R \Hom_S(I_z, I_z)[1] + R\Gamma(S, \CO_S)}_{T_{S^{[d]}, I_z}} + \underbrace{R \Hom_S(I_z, I_z \otimes N) + R \Gamma(S,N)[-1]}_{Obs[-1]}. \]

We hence find
\[
\sum_{d \geq 0} q^d \int_{[ P_{0}(X,dF) ]^{\vir}} 1
=
\int_{S^{[d]}} c_{2d}( - R\Hom_S(I_z, I_z \otimes N) + R \Gamma(S,N)).
\]
The claim hence follows from \cite[Cor. 1]{CO}.
\end{proof}

In two special cases this computes all Pandharipande-Thomas invariants in class $dF$:
\begin{prop} \label{prop:PT0 check} Let $\pi : X \to B$ be one of the following two cases:
\begin{itemize}
\item[(a)] the projection $S \times E \to S$ for any surface $S$ and elliptic curve $E$,
\item[(b)] the product map $S \times \Sigma \to C \times \Sigma$ for any elliptic surface $S \to C$ and curve $\Sigma$.
\end{itemize}
Then we have
\[ \sum_{n \in \BZ} \sum_{d \geq 0} \blangle 1 \brangle^X_{n,dF} (-p)^n q^d = \prod_{m \geq 1} (1-q^m)^{-e(B) + c_1(N) \cdot (K_B - c_1(N))}. \]
In particular, Conjecture~\ref{conj:normalization} holds in these cases.
\end{prop}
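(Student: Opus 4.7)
The plan is to combine Lemma~\ref{lemma:basic computation}, which already evaluates the $p^0$-coefficient of the generating series, with a vanishing statement for the remaining coefficients. That is, it suffices to show
\[ \left\langle 1 \right\rangle^X_{n, dF} = 0 \quad \text{for all } n \neq 0, \]
after which the proposition follows immediately.

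For case (a), I would exploit the action of the elliptic curve $E$ on $X = S \times E$ by translation in the second factor, which preserves the fibration $\pi$ and lifts to an $E$-action on $P_{n, dF}(X)$ compatible with the perfect obstruction theory. Since $E$ acts freely on itself, descent along $\pi$ identifies $E$-equivariant coherent sheaves on $X$ with coherent sheaves on $S$, so any $E$-equivariant $F$ on $X$ has the form $\pi^\ast G$. Requiring $F$ to be pure of dimension~$1$ with class $dF$ forces $G$ to be $0$-dimensional of length $d$, and the existence of an $E$-equivariant section $\CO_X \to F$ with $0$-dimensional cokernel further forces $G = \CO_Z$ for some $Z \in S^{[d]}$. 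For such fixed pairs $\chi(F) = d \cdot \chi(\CO_E) = 0$, so the $E$-fixed locus of $P_{n, dF}(X)$ is empty whenever $n \neq 0$. The infinitesimal $E$-action then yields a nowhere-vanishing vector field on $P_{n, dF}(X)$, which by equivariance of the obstruction theory defines a surjective cosection of the obstruction sheaf, and Kiem--Li-type cosection localization gives $[P_{n, dF}(X)]^{\vir} = 0$.

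Case (b) is handled analogously, replacing the global $E$-action by the fiberwise action of the neutral component $J^\circ$ of the relative Jacobian $\mathrm{Jac}(\pi_S)$. This group scheme acts as an elliptic curve on smooth fibers and as $\BG_m$ or $\BG_a$ on nodal or cuspidal fibers; the same descent and fixed-locus analysis identifies the $J^\circ$-fixed locus of $P_{n, dF}(X)$ with $B^{[d]}$ when $n = 0$ and empty otherwise, and a relative form of cosection localization completes the argument.

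The hard part will be legitimizing cosection localization outside the standard $\BC^\ast$-equivariant framework: in case (a) because $E$ is a compact abelian variety rather than a torus, and in case (b) because $J^\circ$ is only a group scheme over $B$. A practical workaround in both cases is to degenerate the elliptic fiber structure to a model carrying a genuine $\BC^\ast$-action --- a nodal cubic in place of $E$ for case (a), respectively a generic degeneration of $\pi_S$ in case (b) --- apply standard torus localization there, and transport the vanishing back by deformation invariance of PT invariants.
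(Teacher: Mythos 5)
Your reduction to the vanishing of $\blangle 1 \brangle^X_{n,dF}$ for $n \neq 0$, with the $n=0$ contribution supplied by Lemma~\ref{lemma:basic computation}, is exactly the paper's strategy, and your treatment of case (a) is close in spirit to the paper's. However, both cases contain genuine gaps in the mechanism you propose.

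In case (a), the step ``a nowhere-vanishing vector field defines a surjective cosection of the obstruction sheaf'' implicitly uses the self-duality $\mathrm{Ob} \cong \mathrm{Def}^{\vee}$ of the stable-pairs obstruction theory, which holds only when $X$ is Calabi--Yau; here $S$ is an arbitrary surface, so Serre duality does not convert the translation vector field into a cosection. (The paper does use this Serre-duality argument, but only for $\BC^2\times E$, where $K_X$ is equivariantly trivial up to the weights.) The correct general mechanism, and the one the paper uses, is that for $n>0$ the $E$-action on $P_n(X,d[E])$ is free, so the ($E$-equivariant) virtual class is pulled back from the quotient $P_n(X,d[E])/E$, which has dimension one less; since the virtual dimension is $0$, the pulled-back class lies in $\rho^{\ast}A_{-1}$ and its degree vanishes (see \cite{ObReduced}). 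For $n<0$ the moduli space is empty.

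Case (b) is where the proposal breaks down. The relative Jacobian $J^{\circ}$ is a group scheme over $C$, not a group acting on $X$, so it does not act on $P_{n,dF}(X)$; what you would actually need is a global fiberwise translation vector field, i.e.\ a section of $T_{S/C} \cong \pi_S^{\ast}(R^1\pi_{S\ast}\CO_S)$ on the smooth locus. For a rational elliptic surface this sheaf is $\pi_S^{\ast}\CO_{\p^1}(-1)$ and has no nonzero sections, so no such vector field (hence no cosection) exists. Your fallback --- degenerating the elliptic fibers to nodal cubics and invoking deformation invariance --- is also not legitimate: that is a degeneration to a \emph{singular} total space, which is outside the scope of deformation invariance of PT invariants and would require the full degeneration formula with relative correction terms. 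The paper's route for (b) is different: first degenerate $\Sigma$ to $\p^1$, reduce to the case of a rational elliptic surface $S$ by degenerating $S$ into a chain of such surfaces, and then prove the vanishing for $n>0$ via the GW/PT correspondence \cite{PaPix} combined with $\BC^{\ast}$-localization on the $\p^1=\Sigma$ factor (not on the elliptic fibers).
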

\begin{proof}
For (a) if $n > 0$ then the elliptic curve $E$ acts free on $P_n(X, d [E])$ by translation.
The virtual fundamental class is then pulled back from the quotient $P_n(X,d[E])/E$, and hence its integral vanishes, see \cite{ObReduced}. In case $n<0$ the moduli space is empty, and for $n=0$ the invariants are computed by Lemma~\ref{lemma:basic computation}.

For (b) by a degeneration argument we can assume that $\Sigma = \p^1$
If $S = \p^1 \times E$, then the claim follows from part (a). Otherwise, by a further degeneration argument, where we degenerate $S$ to a chain of rational elliptic surfaces,
we can assume that $S$ is a rational elliptic surface.
Applying the GW/PT correspondence \cite{PaPix} and using the localization formula for the natural torus action on $\p^1$ then yields the vanishing for $n>0$ by a direct computation.
\end{proof}

\subsection{Quasi-Jacobi forms}
\label{subsec:quasi-Jacobi forms}
Jacobi forms are holomorphic functions $f : \BC \times \BH \to \BC$
which satisfy a transformation law under the action of the Jacobi group $\SL_2(\BZ) \ltimes \BZ^2$, see \cite{EZ}.
Quasi-Jacobi forms are holomorphic parts of almost holomorphic Jacobi forms, see \cite{Lib, elfib,vIOP} for more details.
We introduce here quasi-Jacobi forms in an adhoc fashion in terms of their generators.
In particular, we will identify a quasi-Jacobi form with its Fourier expansion in the variables
\begin{equation} p = e^{2 \pi i x}, \quad q = e^{2 \pi i \tau}, \quad (x,\tau) \in \BC \times \BH. \label{Vars} \end{equation}

For even $k \geq 2$ define the index $0$ and weight $k$ Eisenstein series
\[ G_k(q) = - \frac{B_k}{2 \cdot k} + \sum_{n \geq 1} \sum_{d|n} d^{k-1} q^n. \]
Let $D_p = p \frac{d}{dp}$,
recall $\Theta(p,q)$ from \eqref{Theta}, and consider the following functions:
\begin{gather*}
\A(p,q) = D_p \log \Theta(p,q), \quad  \quad 
\wp(p,q) = - D_p \A(p,q) - 2 G_2(q), \\
\wp'(p,q) = D_p \wp(p,q).
\end{gather*}
We let $\Theta, \A, \wp, \wp'$ be of weight $-1,1,2,3$ and index $1,0,0,0$ respectively.

\begin{defn}[{\cite[Prop.2.1]{vIOP}}]
The algebra of quasi-Jacobi forms $\QJac$ is the subring of the free polynomial algebra
$\BC[ \Theta, \A, G_2, \wp, \wp', G_4 ]$ consisting of those polynomials which define holomorphic functions $\BH\times \BC \to \BC$
under \eqref{Vars}.
The algebra is bigraded by weight and index:
\[ \QJac = \bigoplus_{m \in \frac{1}{2} \BZ_{\geq 0}} \bigoplus_{k \in \BZ} \QJac_{k,m}. \]
An element of $\QJac_{k,m}$ is called a quasi-Jacobi form of weight $k$ and index $m$.
\end{defn}

The generators $G_2$ and $\A$ are strictly quasi-Jacobi forms; the remaining generators are examples of (meromorphic) Jacobi forms.
It is hence natural to consider the formal derivative operators in the generators $G_2$ and $\A$:
\[ \frac{d}{d G_2} : \QJac(\Gamma)_{k,m} \to \QJac(\Gamma)_{k-2,m}, \quad \frac{d}{d \A} : \QJac(\Gamma)_{k,m} \to \QJac(\Gamma)_{k-1,m} \]
The operators $\frac{d}{dG_2}$, $\frac{d}{d \A}$, $D_p$ and $D_{\tau} := D_q := q \frac{d}{dq}$ act on $\QJac$.
Let~$\mathrm{wt}$ and~$\mathrm{ind}$ be the operators which act on~$\QJac_{k,m}$ by multiplication by the weight~$k$ and the index~$m$ respectively.
By \cite[(12)]{elfib} we have the commutation relations:
\begin{equation} \label{eq:comm relations 1}
\begin{alignedat}{2}
\left[ \frac{d}{dG_2}, D_{\tau} \right] & = -2 \mathrm{wt}, & \qquad \qquad \left[\frac{d}{d \A}, D_p \right] & = 2 \mathrm{ind} \\
\left[\frac{d}{dG_2}, D_p \right] & = -2\frac{d}{dA}, &\qquad  \left[\frac{d}{dA}, D_{\tau}\right] & = D_p.
\end{alignedat}
\end{equation}

The algebra of (weak) Jacobi forms $\Jac$ is the subring of $\QJac$ consisting of functions $f$ with $\frac{d}{dG_2} f = \frac{d}{d \A} f = 0$.

\section{Holomorphic anomaly equation} \label{sec:HAE}
We state with all details our main holomorphic anomaly equation,
then discuss the compatibility with the string and divisor equation and deduce some consequences.

\subsection{Generating series}
Define the divisor class
\[ W = [\iota(B)] - \frac{1}{2} \pi^{\ast}(c_1(N_{B/X})) \ \in H^2(X). \]

Recall that for any fixed class $\beta \in H_2(B,\BZ)$ we defined the series:
\[
\left\langle \ch_{k_1}(\gamma_1) \cdots \ch_{k_n}(\gamma_n) \right\rangle^{X}_{\beta}
= 
\sum_{\substack{\widetilde{\beta} \in H_2(X,\BZ) \\ \pi_{\ast} \widetilde{\beta} = \beta }}
\sum_{m \in \frac{1}{2} \BZ}
i^{2m} p^{m}  q^{W \cdot \widetilde{\beta}} \left\langle \ch_{k_1}(\gamma_1) \cdots \ch_{k_n}(\gamma_n) \right\rangle^{X}_{m+\frac{1}{2} d_{\widetilde{\beta}}, \widetilde{\beta}}
\]
where $i = \sqrt{-1}$ and $d_{\widetilde{\beta}} = \int_{\widetilde{\beta}} c_1(T_X)$.
We also use the normalization:
\[
Z_{\beta}\left( \ch_{k_1}(\gamma_1) \cdots \ch_{k_n}(\gamma_n) \right)
= 
\frac{ \left\langle \ch_{k_1}(\gamma_1) \cdots \ch_{k_n}(\gamma_n) \right\rangle^{X, \PT}_{\beta} }{\left\langle 1 \right\rangle^{X, \PT}_{0} }
\]

\subsection{Main equation}
Recall from \cite[Sec.2]{elfib} that the operator
$[W \cup (- ),\pi^*\pi_*( - )]$ acts semi-simply on $H^{\ast}(X)$ and yields a decomposition
\[ H^*(X) = H_+^*\oplus H_-^*\oplus H_{\perp}^*\]
into eigenspaces of eigenvalue $1$, $-1$, and $0$ respectively.
Below we always assume that each $\gamma_i \in H^{\ast}(X)$ lies in such an eigenspace.
We let $\wt(\gamma_i)$ denote its eigenvalue.

Define the following class in $H^{\ast}(X^3)$:
 \begin{align*} \CE & \coloneqq \Delta_{X,12} \cdot \pi^*\Delta_{B,13}+\Delta_{X,13} \cdot \pi^*\Delta_{B,12} + \Delta_{X,23} \cdot \pi^*\Delta_{B,12}  \\
& \quad -\pi^*\Delta_{B,123}\big(\pr_1^*W + \pr_2^* W + \pr_3^* W \big) \end{align*}
where we denote by $\pi$ also the map 
\[ X\times X\times X \longrightarrow B\times B \times B, \]
we let $\Delta_{X, I}$ be the class of the locus in $X^n$ where all the points labeled by
 $I \subset \{ 1, \ldots, n \}$ coincide,
and we let $\pr_I$ be the projections to the factors labeled by $I$.
We view $\CE$ here as a correspondence, i.e. for $\gamma \in H^{\ast}(X)$ and $\Gamma \in H^{\ast}(X^2)$ we define
\begin{equation} \label{CE action}
\begin{gathered} 
\CE(\gamma) = \pr_{23 \ast}( \pr_1^{\ast}(\gamma) \CE ) \\
\CE(\Gamma) = \pr_{3 \ast}( \pr_{12}^{\ast}(\Gamma) \CE ).
\end{gathered}
\end{equation}
The following lemma is straightforward:
\begin{lemma} Let $\gamma, \gamma' \in H^{\ast}(X)$ and $\alpha, \alpha' \in H^{\ast}(B)$.
\begin{itemize}
\item (Symmetry) $\CE( \gamma \boxtimes \gamma' ) = \CE( \gamma' \boxtimes \gamma )$
\item ($H^{\ast}(B)$-linearity)
\begin{gather*}
\CE( \pi^{\ast}(\alpha) \cdot \gamma) = \pr_1^{\ast}(\alpha) \cdot \CE(\gamma) \\
\CE( \pi^{\ast}(\alpha \boxtimes \alpha') \cdot \gamma \boxtimes \gamma') = (\alpha \cdot \alpha') \cdot \CE(\gamma \boxtimes \gamma')
\end{gather*}
\item $\CE(1) = \Delta_B$,\ $\CE(W) = \Delta_X$,\ $\CE(\alpha) = (\alpha_1 + \alpha_2) \Delta_{B}$ for $\alpha \in H^{\ast}_{\perp}(X)$
\item $\CE(\gamma \boxtimes 1) = \pi^{\ast} \pi_{\ast}(\gamma)$,\ $\CE(W \boxtimes \gamma) = \gamma$
\item $\CE(\gamma \boxtimes \gamma') = \pi^{\ast} \pi_{\ast}(\gamma \cdot \gamma') + \gamma \cdot \pi^{\ast} \pi_{\ast}(\gamma')
+ \pi^{\ast} \pi_{\ast}(\gamma) \cdot \gamma' 
- \pi^{\ast}( \pi_{\ast}(W \gamma) \cdot \pi_{\ast}(\gamma')) - \pi^{\ast}( \pi_{\ast}(\gamma) \cdot \pi_{\ast}( \gamma' W ) )
- \pi^{\ast}( \pi_{\ast}(\gamma) \pi_{\ast}(\gamma') ) W$
\end{itemize}
\end{lemma}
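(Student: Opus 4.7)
The plan is to verify each identity by direct computation, exploiting two structural features of $\CE$ that make the bookkeeping uniform. First, each of the four summands defining $\CE$ is supported on the triple-small-diagonal $\{\pi(x_1)=\pi(x_2)=\pi(x_3)\} \subset X^3$: for the $\pi^*\Delta_{B,123}$ term this is by definition, and for each summand $\Delta_{X,ij}\cdot\pi^*\Delta_{B,k\ell}$ it follows because $x_i=x_j$ already forces $\pi(x_i)=\pi(x_j)$, so the remaining base-diagonal condition collapses to the small diagonal. Second, on this locus one may freely permute the $B$-indices in any $\pi^*\Delta_{B,ij}$, and equally move $\pr_i^*\pi^*\alpha$ between the three factors. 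These two observations immediately give the symmetry claim (the swap $(12)$ fixes the first summand via $\Delta_{X,12}\cdot\pi^*\Delta_{B,13}=\Delta_{X,12}\cdot\pi^*\Delta_{B,23}$ and exchanges the second and third, while the $\pi^*\Delta_{B,123}$ part is manifestly $S_3$-symmetric), and give $H^*(B)$-linearity by moving $\pi^*\alpha$ past $\pr_{23\ast}$ via the projection formula.

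For the explicit evaluations, I would rewrite $[\Delta_{X,ij}]=\iota_{ij,\ast}(1)$ for the diagonal embedding $\iota_{ij}:X^2\hookrightarrow X^3$ and reduce each pushforward $\pr_{23\ast}$ (or $\pr_{3\ast}$) to a fiber integral via the projection formula. The resulting integrals are evaluated using the elementary relations
\[ \pi_*(1)=0, \qquad \pi_*(W)=1, \qquad \pi_*(W^2)=0, \]
where the last follows from $W^2=\frac{1}{4}\pi^*c_1(N)^2$ together with $\pi_*\pi^*=0$ (both coming from $B_0^2=B_0\cdot\pi^*c_1(N)$ for a Weierstra{\ss} model). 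For example, $\CE(1)=\Delta_B$ drops out once one observes that $\pr_{23\ast}(\Delta_{X,12}\pi^*\Delta_{B,13})=\pr_{23\ast}(\Delta_{X,13}\pi^*\Delta_{B,12})=\pi^*\Delta_B$, while $\pr_{23\ast}(\Delta_{X,23}\pi^*\Delta_{B,12})=0$ (the relevant map factors through the diagonal of $X^2$ with a $1$-dimensional fiber, and $\pi_*(1)=0$), and the $-\pi^*\Delta_{B,123}(W_1+W_2+W_3)$ contribution leaves exactly $-\pi^*\Delta_B$. The evaluation for $\alpha\in H^*_\perp$ is analogous: the defining conditions $\pi_*\alpha=\pi_*(W\alpha)=0$ kill all correction terms, leaving just the two surviving diagonal contributions $(\alpha_1+\alpha_2)\Delta_B$.

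The final compound formula for $\CE(\gamma\boxtimes\gamma')$ becomes a bookkeeping exercise once the K\"unneth identity
\[ \Delta_X = \pi^*\Delta_B \cdot (\pr_1^*W + \pr_2^*W) \quad \text{on } X^2 \]
is in hand; this identity is a consequence of the splitting $H^*(X)=\pi^*H^*(B)\oplus W\pi^*H^*(B)$ provided by the Weierstra{\ss} model, together with the pairings $\int_X\pi^*\alpha\cdot\pi^*\alpha'=0$, $\int_X W\pi^*\alpha\cdot\pi^*\alpha'=\int_B\alpha\alpha'$ and $\int_X W\pi^*\alpha\cdot W\pi^*\alpha'=0$. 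Using the $H^*(B)$-bilinearity already established, the general formula reduces to checking the identity on pairs of basis types drawn from $\{1,W,H^*_\perp\}$, and each case is then a direct application of the preceding evaluations. The main obstacle is matching the precise combination of signs and $W$-corrections across the six terms on the right-hand side, but this is routine once one carefully tracks, in the eigenspace expansion of each $\gamma$, the distinction between $\pi^*\pi_*(W\gamma)$ and $\pi^*\pi_*(\gamma)$ as well as which factor each $W$ or $\pi^*\alpha$ sits on.
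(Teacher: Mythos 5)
The paper does not actually write out a proof of this lemma (it is labelled ``straightforward''), so the only question is whether your argument is correct. Your treatment of the first four bullets is fine: the observation that every summand of $\CE$ is supported on $\{\pi(x_1)=\pi(x_2)=\pi(x_3)\}$ does give symmetry and $H^{\ast}(B)$-linearity, and the evaluations of $\CE(1)$, $\CE(W)$, $\CE(\alpha)$, $\CE(\gamma\boxtimes 1)$, $\CE(W\boxtimes\gamma)$ by fiber integration using $\pi_*(1)=0$, $\pi_*(W)=1$, $\pi_*(W^2)=0$ come out exactly as you describe.

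The last bullet, however, rests on a false identity. The splitting $H^{\ast}(X)=\pi^{\ast}H^{\ast}(B)\oplus W\pi^{\ast}H^{\ast}(B)$ does not hold: it omits the summand $H^{\ast}_{\perp}$, which you yourself invoke in the third bullet and which is nonzero for essentially every threefold in the paper (e.g.\ $X=\p^2\times E$ has the $H^1(E)$-classes, and the elliptic Calabi--Yau over $\p^2$ has $h^{2,1}=272$). Consequently $\Delta_X\neq\pi^{\ast}\Delta_B\,(\pr_1^{\ast}W+\pr_2^{\ast}W)$; the right-hand side misses all $H^{\ast}_{\perp}\otimes H^{\ast}_{\perp}$ K\"unneth components of the diagonal. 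Substituting this identity into $\CE$ contradicts the lemma itself (it would give $\CE(W)=\pi^{\ast}\Delta_B(W_1+W_2)$ rather than $\Delta_X$), and in the case $\gamma,\gamma'\in H^{\ast}_{\perp}$ your bookkeeping would produce $0$, whereas the correct value is $\pi^{\ast}\pi_{\ast}(\gamma\gamma')$ --- the one surviving term of the six, and generally nonzero. Relatedly, the case $\gamma,\gamma'\in H^{\ast}_{\perp}$ is \emph{not} ``a direct application of the preceding evaluations'': none of the earlier bullets computes $\CE(\alpha\boxtimes\alpha')$ for two perpendicular classes. The repair is easy and makes the detour unnecessary: compute $\CE(\gamma\boxtimes\gamma')=\pr_{3\ast}(\gamma_1\gamma'_2\,\CE)$ directly for arbitrary $\gamma,\gamma'$ by the same fiber integrations as before. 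The three diagonal summands contribute $\pi^{\ast}\pi_{\ast}(\gamma\gamma')$, $\gamma\cdot\pi^{\ast}\pi_{\ast}(\gamma')$ and $\pi^{\ast}\pi_{\ast}(\gamma)\cdot\gamma'$, while $-\pi^{\ast}\Delta_{B,123}(W_1+W_2+W_3)$ contributes the three correction terms (the fibers of $X\times_BX\times_BX\to X$ being $X_b\times X_b$), with no K\"unneth decomposition of $\Delta_X$ needed anywhere.
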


Given a class $\gamma \in H^{\ast}(X \times X)$ we will also write
\begin{multline*}
 (a-1+\wt^L)! (b-1+\wt^L)! \tch_a \tch_b( \gamma ) \\
= \sum_{i} (a-1+\wt(\gamma_i^L) )! (b-1+\wt(\gamma_i^R))! \tch_a(\gamma_i^L) \tch_b(\gamma_i^R) 
\end{multline*}
whenever $\gamma = \sum_i \gamma_i^{L} \boxtimes \gamma_i^R$ is a $\wt$-homogeneous K\"unneth decomposition of $\gamma$.

We make the following convention regarding factorials:
In all sums below we sum only over those terms where all
in the summand all occuring factorials $a!$ have $a \geq 0$.
For example,
$\sum_{k \in \BZ} \frac{x^k}{k!}$ will mean for us $k \geq 0$.
Similarly, in all appearing binomial coefficients $\binom{n}{a,b} = n!/(a! b!)$ we assume that $n,a,b \geq 0$.

Given a function $Z : H^{\ast}(X) \to \BQ$ and $\Gamma \in H^{\ast}(X^2)$
the expression
$Z(\Gamma_1) \cdot Z(\Gamma_2)$
stands for the sum
\begin{equation} Z(\Gamma_1) \cdot Z(\Gamma_2) := \sum_{i} Z( \phi_i ) Z( \phi_i^{\vee} ) \label{roijeg} \end{equation}
where $\Gamma = \sum_i \phi_i \otimes \phi_i^{\vee} \in H^{\ast}(X \times X)$ is a K\"unneth decomposition.
We will apply this below to the diagonal class $\Delta_B \in H^{\ast}(B^2)$.

We are ready to state our main conjecture:

\begin{conj} \label{conj:HAE} Assume that $c_3(T_X \otimes \omega_X) = 0$.
Then we have:
\begin{enumerate}
\item[(a)] (Quasi-Jacobi form property)
\[
Z_{\beta}\left( \ch_{k_1}(\gamma_1) \cdots \ch_{k_n}(\gamma_n) \right)
=
\Delta(q)^{\frac{1}{2} c_1(N_{B/X}) \cdot \beta}
\sum_{\alpha = (\beta_1, \ldots, \beta_{\ell})}
\frac{\varphi_{\alpha}(p,q)}
{ \prod_{i=1}^{\ell} \Theta(p^{\mathrm{div}(\beta_i)}, q\big)^2 }
\]
where $\alpha$ runs over all decompositions
$\beta = \beta_1 + \ldots \beta_k$ into effective classes $\beta_i \in H_2(B,\BZ)$
which are of divisibility $\mathrm{div}(\beta_i)$ in $H_2(B,\BZ)$,
and all $\varphi_{\alpha} \in \QJac$.\\

\item[(b)] (Holomorphic anomaly equation) We have:
\begin{small}
\begin{align*}
& \frac{d}{dG_2} Z_{\beta}\left( \tch_{k_1}(\gamma_1) \cdots \tch_{k_n}(\gamma_n) \right) = \\
& -2 \sum_{i < j} \sigma_{ij} Z_{\beta}\left( \tch_{k_i-1}(\gamma_i \Delta_{B,1}) \tch_{k_j-1}(\gamma_j \Delta_{B,2}) \prod_{\ell \neq i,j} \tch_{k_{\ell}}(\gamma_{\ell}) \right) \\
& - \sum_{i=1}^{n} \sigma_i Z_{\beta}\left( \tch_{k_i-2}( \gamma_i \cdot c_2(B)) \prod_{\ell \neq i} \tch_{k_{\ell}}(\gamma_{\ell}) \right) \\
& + \sum_{i=1}^{n} \sum_{m_1 + m_2 = k_i} \sigma_i Z_{\beta}\left( (-1)^{(1+\wt_L)(1+\wt_R)} \frac{(m_1 - 1 + \wt^L)! (m_2 - 1 + \wt^R)!}{(k_i - 2 + \wt(\gamma_i))!} \tch_{m_1} \tch_{m_2} ( \CE( K_X \cdot \gamma_i )) \prod_{\ell \neq i} \tch_{k_{\ell}}(\gamma_{\ell}) \right) \\
& -2 \sum_{i < j} \sigma_{ij} (-1)^{(1+\wt(\gamma_i)) (1+\wt(\gamma_j))} \binom{ k_i + k_j - 4 + \wt(\gamma_i) + \wt(\gamma_j) }{ k_i-2 + \wt(\gamma_i), \, k_j - 2 + \wt(\gamma_j) } 
 Z_{\beta}\left( \tch_{k_i+k_j-2}( \CE( \gamma_i \boxtimes \gamma_j )) \prod_{\ell \neq i,j} \tch_{k_{\ell}}(\gamma_{\ell}) \right),
\end{align*}
\end{small}
\noindent
where $\sigma_i$ is the sign obtained by permuting the $i$-th entry of $(\gamma_1, \ldots, \gamma_n)$ to the left-most position,
and $\sigma_{ij}$ is the sign obtained by permuting the $i$-th and $j$-th entry to the left-most position.
\end{enumerate}
\end{conj}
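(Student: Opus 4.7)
The overall strategy divides into two linked stages. First, I would aim to prove the quasi-Jacobi form property and pole structure of part (a), exploiting the elliptic fibration structure via the $\pi$-stable pairs formalism, which coincides with PT theory under our assumption $c_3(T_X \otimes \omega_X)=0$ (Conjecture \ref{conj:wallcrossing}). Second, conditional on part (a), I would deduce the holomorphic anomaly equation in part (b) by transferring the corresponding equation from the Gromov-Witten side along the descendent GW/PT correspondence.

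For part (a), the approach is to adapt the methods of \cite{BK, elfib} (which handle the Calabi-Yau case) so as to allow descendent insertions and a non-trivial relative dualizing sheaf. Any $\pi$-stable pair decomposes into a ``horizontal'' part whose support projects onto a curve class $\sum \beta_i = \beta$ on $B$ together with ``vertical'' contributions concentrated on finitely many fibers. The fiber contributions should produce the $\Theta(p^{\mathrm{div}(\beta_i)}, q)^{-2}$ factors, parallel to Toda's computation in the Calabi-Yau case, while the prefactor $\Delta(q)^{\frac{1}{2} c_1(N_{B/X}) \cdot \beta}$ should arise from comparing the variable $W \cdot \widetilde{\beta}$ against $B_0 \cdot \widetilde{\beta}$ combined with the degree-zero evaluation of Proposition~\ref{prop:PT0 check}. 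The quasi-modular and quasi-elliptic transformation laws for the coefficients $\varphi_\alpha$ would then follow from a monodromy argument around the elliptic fibration in the style of \cite{Lib, vIOP}.

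For part (b), assume both part (a) and the Gromov-Witten HAE of \cite[Conj.~B]{HAE}. The GW/PT descendent correspondence \cite{PaPix, MOOP} expresses each stationary GW descendent series as a universal Laurent polynomial in $z$ whose coefficients are combinations of PT descendent brackets, under the substitution $p = e^z$. Because $z$ and $p$ are algebraically independent, identities of PT series can be extracted by comparing individual $z$-coefficients. The key algebraic input is \cite[Lemma~2.15]{HilbHAE}, which rewrites the factorwise $G_2$-derivative of a product under $p=e^z$ as a combination of $\frac{d}{dG_2}$ and $\frac{d}{d\A}$ applied to each factor separately; combined with part (a) this lets one disentangle the two derivations. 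Matching the resulting expression against the GW-side HAE should simultaneously yield the $\frac{d}{dG_2}$-equation above and the $\frac{d}{d\A}$-equation of Conjecture~\ref{conj:HAE introduction}, with the correspondence class $\CE$ on $X^3$ emerging as the PT-side shadow of the ``splitting'' contribution in the GW HAE.

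The main obstacle is that none of the three ingredients---the quasi-Jacobi property, \cite[Conj.~B]{HAE}, and an explicit form of the GW/PT correspondence beyond essential descendents---is currently available in a common range of geometries, so this reduction is essentially never fully rigorous. In practice I therefore expect that the conjecture can be verified only in special cases where the GW/PT route can be bypassed: directly via the equivariant vertex formalism for $\BC^2 \times E$ with the anti-diagonal torus action (as the authors carry out later in the paper), by localization for $\p^2 \times E$ in low degree, or through reduced obstruction theory techniques for $K3 \times \BC$. A fully general proof would require either establishing \cite[Conj.~B]{HAE} in sufficient generality together with an extension of \cite{MOOP} beyond essential descendents, or developing an intrinsic PT-side argument via a sheaf-theoretic degeneration of $P_{n,\beta}(X)$ compatible with the elliptic fibration.
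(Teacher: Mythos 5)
The statement you are asked to prove is Conjecture~\ref{conj:HAE}, which the paper itself leaves unproven: the authors only explain that it was obtained by transporting \cite[Conj.~B]{HAE} through the GW/PT descendent correspondence \cite{PaPix, MOOP} using \cite[Lemma 2.15]{HilbHAE} to separate the $\frac{d}{dG_2}$- and $\frac{d}{d\A}$-derivatives, and they verify it only in special cases ($\p^2 \times E$ in low degree, the anti-diagonal equivariant theory of $\BC^2 \times E$, and reduced versions for $K3 \times \BC$). Your outline reproduces essentially this same heuristic derivation and correctly identifies the same obstruction the authors themselves acknowledge --- that the quasi-Jacobi property, the GW-side anomaly equation, and an explicit GW/PT correspondence beyond essential descendents are never simultaneously available --- so your proposal matches the paper's approach and, like the paper, does not and cannot at present constitute a proof.
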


The above conjecture differs from the Conjectures~\ref{conj:QJac introduction} and~\ref{conj:HAE introduction} given in the introduction
in that we do not make any claims about the weight and index of the quasi-Jacobi form,
and that we do not give the holomorpic anomaly equation for $\A$.
Instead, we show below that these are formal consequences
of the divisor and string equation (Lemma~\ref{lemma:divisor_string_equation})
and the Lie algebra relations given in \eqref{eq:comm relations 1}:

\begin{prop} \label{prop:weight}
Assume that $c_3(T_X \otimes \omega_X)=0$.
If Conjectures~\ref{conj:normalization} and \ref{conj:HAE} hold, then
the quasi-Jacobi form
$Z_{\beta}( \tch_{k_1}(\gamma_1) \cdots \tch_{k_n}(\gamma_n) )$ is
of weight\footnote{The first term can be rewritten as $\int_{\iota_{\ast} \beta} K_X = - \beta \cdot (c_1(T_B) + \ell)$.}
\[ K_X \cdot \iota_{\ast}\beta + \sum_{i=1}^{n} (k_i - 1 + \wt(\gamma_i)). \]
\end{prop}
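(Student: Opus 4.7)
\medskip

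\noindent\textbf{Proof proposal.} The plan is to extract $\wt(F)$ for $F := Z_\beta\bigl(\prod_i \tch_{k_i}(\gamma_i)\bigr)$ via the commutator identity
$\bigl[\tfrac{d}{dG_2}, D_\tau\bigr] = -2\,\wt$
from \eqref{eq:comm relations 1}. Since Conjecture~\ref{conj:HAE}(a) guarantees that $F$ is a quasi-Jacobi form, it is an eigenvector of $\wt$; thus
\[
2\,\wt(F)\cdot F \;=\; D_\tau \tfrac{d}{dG_2} F \;-\; \tfrac{d}{dG_2} D_\tau F,
\]
and the task reduces to computing both sides in closed form.

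\smallskip

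First I translate $D_\tau$ into a cohomological insertion. The variable $q$ tracks $W\cdot \widetilde{\beta}$, so the divisor equation (Lemma~\ref{lemma:divisor_string_equation}) applied to $D=W$ together with the identity $\ch_2(W) = \tch_2(W) + \tfrac{1}{24}\int_X W\,c_2(T_X)$ gives, on the unnormalized correlator, an insertion of $\tch_2(W)$ plus a scalar. Under the hypothesis $c_3(T_X\otimes\omega_X)=0$, Conjecture~\ref{conj:normalization} forces $\langle 1\rangle^{X,\PT,\pi}_0$ to be independent of $p$ with logarithmic $q$-derivative equal to $a\,(G_2 + \tfrac{1}{24})$, where $a := e(B) + c_1(N)\cdot(c_1(T_B) + c_1(N)) = \int_X W\,c_2(T_X)$ (using Lemma~\ref{lemma:Chern classes} and $c_1(N)^2=0$). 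The constants conspire so that
\[
D_\tau F \;=\; Z_\beta\bigl(\tch_2(W)\textstyle\prod_i \tch_{k_i}(\gamma_i)\bigr) \;-\; a\,G_2\,F.
\]

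\smallskip

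Next I form both sides of the commutator. To compute $D_\tau\tfrac{d}{dG_2}F$ I apply the formula above to each term on the right-hand side of Conjecture~\ref{conj:HAE}(b); to compute $\tfrac{d}{dG_2}D_\tau F$ I apply the HAE to $Z_\beta(\tch_2(W)\prod_i\tch_{k_i}(\gamma_i))$ and use $\tfrac{d}{dG_2}(aG_2 F) = aF + aG_2 \tfrac{d}{dG_2}F$. The $aG_2\tfrac{d}{dG_2}F$ pieces cancel between the two sides, leaving
\[
2\,\wt(F)\cdot F \;=\; aF \;-\; \Bigl[\tfrac{d}{dG_2}Z_\beta\bigl(\tch_2(W)\textstyle\prod_i\tch_{k_i}(\gamma_i)\bigr) - \bigl(\text{HAE RHS for }F\bigr)\text{ with }\tch_2(W)\text{ adjoined}\Bigr].
\]
The bracketed discrepancy is exactly the collection of HAE terms in which $\tch_2(W)$ plays the role of one of the insertions being combined — i.e.\ the $i=W$ cross-term sums and the single-index contribution with $i=W$.

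\smallskip

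I then simplify each of those $W$-participating terms using the properties of the correspondence $\CE$ listed in the lemma following \eqref{CE action}: $\CE(W) = \Delta_X$ and $\CE(W\boxtimes\gamma) = \gamma$. The cross term pairing $\tch_2(W)$ with $\tch_{k_i}(\gamma_i)$ from the first HAE sum collapses, after the multinomial coefficient and sign calculation, to $\bigl(k_i-1+\wt(\gamma_i)\bigr)\cdot F$; summing over $i$ yields the $\sum_i(k_i - 1 + \wt(\gamma_i))$ part of the claimed weight. The single-index HAE term on $W$ combines with the residual $aF$ via $\CE(K_X\cdot W) = K_X\cdot \Delta_X$ and the projection formula to produce a topological constant equal to $K_X\cdot \iota_*\beta$ (absorbing the $aF$ coming from the normalization). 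Dividing by $2$ yields the weight formula. The main obstacle will be the bookkeeping in this last step: tracking the signs $(-1)^{(1+\wt^L)(1+\wt^R)}$, the multinomials $\binom{k_i+k_j-4+\wt}{\cdots}$, the $\tch$-versus-$\ch$ corrections, and verifying that the would-be contributions of $c_2(B)$ and of the index-shifted terms cancel against each other — this is where the hypothesis $c_1(N)^2 = 0$ is repeatedly needed so that $a$ and $\int_X W\,c_2(T_X)$ coincide and so that all spurious terms drop out.
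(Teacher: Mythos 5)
Your proposal is correct and follows essentially the same route as the paper: the paper's proof applies $\tfrac{d}{dG_2}$ to the reformulated divisor equation of Lemma~\ref{lemma: div W} (which is exactly your identity $Z_\beta(\tch_2(W)\prod_i\tch_{k_i}(\gamma_i)) = D_\tau Z_\beta(\cdots) + aG_2 Z_\beta(\cdots)$), uses $[\tfrac{d}{dG_2},D_\tau]=-2\,\mathrm{wt}$ on one side and Conjecture~\ref{conj:HAE} on the other, and solves for the weight — precisely your commutator argument, with the same identification of the $W$-participating HAE terms via $\CE(W)=\Delta_X$ and $\CE(W\boxtimes\gamma)=\gamma$ supplying the two pieces of the weight formula.
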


For the proof we need the following reformulation of the divisor equation:

\begin{lemma} \label{lemma: div W} Assume that $c_3(T_X \otimes \omega_X)=0$ and Conjecture~\ref{conj:normalization}. Then
\begin{multline} \label{div equation for Z beta}
Z_{\beta}( \tch_2(W) \tch_{k_1}(\gamma_1) \cdots \tch_{k_n}(\gamma_n) ) 
=
D_{\tau} Z_{\beta}( \tch_{k_1}(\gamma_1) \cdots \tch_{k_n}(\gamma_n) ) \\
+ \Big( e(B) + c_1(N_{B/X}) \cdot c_1(T_B) \Big) G_2(q) \cdot Z_{\beta}( \tch_{k_1}(\gamma_1) \cdots \tch_{k_n}(\gamma_n) )
\end{multline}
\end{lemma}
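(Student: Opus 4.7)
The plan is to split $\tch_2(W)$ into its $\ch_2$- and $\ch_0$-components, apply the divisor equation to the first to produce the operator $D_\tau$, and let the normalization factor convert a leftover constant into $G_2(q)$.

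I would first expand $\tch_2(W) = \ch_2(W) + \tfrac{1}{24}\ch_0(W\cdot c_2(T_X))$. For each $\widetilde\beta$ with $\pi_{\ast}\widetilde\beta = \beta$, Lemma~\ref{lemma:divisor_string_equation} gives $\langle \ch_2(W)\cdots\rangle^X_{n,\widetilde\beta} = (W\cdot\widetilde\beta)\langle\cdots\rangle^X_{n,\widetilde\beta}$. Since the $\pi$-relative series is weighted by $q^{W\cdot\widetilde\beta}$, the factor $W\cdot\widetilde\beta$ is precisely what is produced by $D_\tau = q\,d/dq$, so
\[
\langle \ch_2(W)\tch_{k_1}(\gamma_1)\cdots\rangle^{X,\PT,\pi}_\beta = D_\tau \langle \tch_{k_1}(\gamma_1)\cdots\rangle^{X,\PT,\pi}_\beta.
\]
The $\ch_0$-term is a scalar: a short push-pull based on Lemma~\ref{lemma:Chern classes}, $\pi_{\ast}B_0 = [B]$, $\pi_{\ast}\pi^{\ast}\ell = 0$ and $B_0\cdot\iota_{\ast}(\ell) = \iota_{\ast}(\ell^2)$ (with $\ell = c_1(N_{B/X})$) yields
\[
\int_X W\cdot c_2(T_X) = e(B) + \ell\cdot c_1(T_B) + 6\int_B \ell^2.
\]
By Corollary~\ref{cor:constant}, the assumption $c_3(T_X\otimes\omega_X) = 0$ forces $\int_B\ell^2 = 0$, so the last summand disappears.

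Next I would pass to the normalized series $Z_\beta$ using the Leibniz rule $D_\tau(X/N_0) = D_\tau X/N_0 - (X/N_0)\, D_\tau\log N_0$ with $N_0 := \langle 1\rangle^{X,\PT,\pi}_0$. Under Conjecture~\ref{conj:normalization} together with $\int_B\ell^2 = 0$ the normalization reduces to
\[
N_0 = \prod_{m\geq 1} (1-q^m)^{-(e(B)+\ell\cdot c_1(T_B))},
\]
whence $D_\tau\log N_0 = (e(B)+\ell\cdot c_1(T_B))(G_2(q)+\tfrac{1}{24})$. Assembling the three contributions, the $-\tfrac{1}{24}(e(B)+\ell\cdot c_1(T_B))$ coming from the $\ch_0$-correction cancels exactly against the $\tfrac{1}{24}$ inside $D_\tau\log N_0$, and the announced identity \eqref{div equation for Z beta} drops out.

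The main (mostly bookkeeping) obstacle is the intersection calculation of $\int_X W\cdot c_2(T_X)$: one must separately track three $\ell^2$-contributions --- two from the $\pi^{\ast}$-terms in $c_2(T_X)$ and one from the correction $-12\iota_{\ast}(\ell)$ --- and check that their combined coefficient is the specific value $6$, so that Corollary~\ref{cor:constant} really removes them under the hypothesis. Once that is in place, the cancellation of the two $\tfrac{1}{24}$-constants between the $\tch$-vs-$\ch$ correction and $D_\tau\log N_0$ is automatic, and no further input is needed.
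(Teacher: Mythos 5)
Your proposal is correct and follows essentially the same route as the paper's (much terser) proof: expand $\tch_2(W)=\ch_2(W)-\frac{1}{24}\int_X W\cdot c_2(T_X)$, use the divisor equation to convert $\ch_2(W)$ into $D_\tau$ acting on the unnormalized series, and absorb the normalization via $D_{\tau} \prod_{n \geq 1} (1-q^n)  = (- G_2(q) - \frac{1}{24} ) \prod_{n \geq 1} (1-q^n)$. Your explicit evaluation $\int_X W\cdot c_2(T_X)=e(B)+\ell\cdot c_1(T_B)+6\int_B\ell^2$ and the cancellation of the two $\frac{1}{24}$-constants correctly fill in the steps the paper leaves implicit.
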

\begin{proof}
Recall that for $D \in H^2(X,\BQ)$ we have:
\[ \tch_2(D) = \ch_2(D) - \frac{1}{24} (D \cdot c_2(T_X)). \]
By Lemma~\ref{lemma:Chern classes} and Corollary~\ref{cor:constant} we find that 
\[ \tch_2(W) = \ch_2(W) - \frac{1}{24} ( e(B) + c_1(N_{B/X}) \cdot c_1(T_B)). \]
The claim now follows from the divisor equation and
the following easy identity (obtained from computing the logarithmic derivative):
\[ D_{\tau} \prod_{n \geq 1} (1-q^n)  = \left(- G_2(q) - \frac{1}{24} \right) \prod_{n \geq 1} (1-q^n).  \qedhere \]
%
\end{proof}

\begin{proof}[Proof of Proposition~\ref{prop:weight}]
We apply $\frac{d}{dG_2}$ to \eqref{div equation for Z beta} and compute both sides.
On the right hand side we use the commutation relation $[ \frac{d}{dG_2} , D_{\tau} ] = -2 \wt$.
On the left we use the holomorphic anomaly equation (Conjecture~\ref{conj:HAE}).
Then we compare both terms and solve for the weight of $Z_{\beta}( \tch_{k_1}(\gamma_1) \cdots \tch_{k_n}(\gamma_n) )$.
By a straightforward computation this yields the claim.
\end{proof}

\begin{prop}
Assume that $c_3(T_X \otimes \omega_X) = 0$ and Conjectures \ref{conj:normalization} and ~\ref{conj:HAE} hold. Then
\begin{align*}
\frac{d}{d \A} Z_{\beta}\left( \tch_{k_1}(\gamma_1) \cdots \tch_{k_n}(\gamma_n) \right)
= & 
\sum_{i = 1}^{n} \sigma_i
Z_{\beta}\left( \ \tch_{k_i-1}( \gamma_i \Delta_{B,1} ) \tch_{2}( \Delta_{B,2} ) \prod_{\ell \neq i} \tch_{k_{\ell}}(\gamma_{\ell})  \ \right) \\
& +\sum_{i=1}^{n} \sigma_i Z_{\beta} \left( \ \tch_{k_i+1}( \pi^{\ast} \pi_{\ast}(\gamma_i) ) \prod_{\ell \neq i} \tch_{k_{\ell}}(\gamma_{\ell}) \ \right).
\end{align*}
Moreover, $Z_{\beta}\left( \tch_{k_1}(\gamma_1) \cdots \tch_{k_n}(\gamma_n) \right)$ is a quasi-Jacobi form
of index
\[
\frac{1}{2} \beta \cdot (\beta + c_1(N_{B/X})).
\]
\end{prop}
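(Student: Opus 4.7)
The plan is to deduce both claims from the $G_2$-holomorphic anomaly equation (Conjecture~\ref{conj:HAE}) by applying the Lie algebra relations \eqref{eq:comm relations 1} to the string equation, in parallel with the proof of Proposition~\ref{prop:weight}. The key input from Lemma~\ref{lemma:divisor_string_equation}, after normalization, is
\[ Z_\beta(\tch_3(1) P) = D_p Z_\beta(P), \qquad P := \prod_i \tch_{k_i}(\gamma_i), \]
using $\tch_3(1) = \ch_3(1)$. One also needs $\wt(1) = -1$, which follows directly from $\Wt = [B_0 \cup , \pi^*\pi_*]$ together with $\pi_*(B_0) = 1$ and $\pi_*(1) = 0$.

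For the $\frac{d}{d\A}$-formula, apply $\frac{d}{dG_2}$ to the string equation and use $[\frac{d}{dG_2}, D_p] = -2 \frac{d}{d\A}$ to obtain
\[ 2 \frac{d}{d\A} Z_\beta(P) = D_p \frac{d}{dG_2} Z_\beta(P) - \frac{d}{dG_2} Z_\beta(\tch_3(1) P). \]
I then expand both $\frac{d}{dG_2}$-terms using the HAE. In the expansion of $Z_\beta(\tch_3(1) P)$ the sum indices $i, j$ run over $\{0, 1, \ldots, n\}$, where index $0$ corresponds to the additional insertion $(\gamma_0, k_0) = (1, 3)$. Terms with $i, j \neq 0$ cancel against $D_p \frac{d}{dG_2} Z_\beta(P)$ after re-identifying $D_p$ with a $\tch_3(1)$ insertion via the string equation. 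The surviving contributions (with $i = 0$) evaluate directly: term (II) vanishes since $\tch_1 \equiv 0$; term (III) vanishes because the factorial-non-negativity convention would require $m_1, m_2 \geq 2$ in a sum with $m_1 + m_2 = 3$; term (I) at $i = 0$ yields the first sum in the claim (using the K\"unneth symmetry of $\Delta_B$); and term (IV) at $i = 0$ yields the second sum via $\CE(1 \boxtimes \gamma) = \pi^* \pi_*(\gamma)$, with the binomial prefactor $\binom{k_j-2+\wt(\gamma_j)}{0,\,k_j-2+\wt(\gamma_j)}$ collapsing to $1$.

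For the index, apply $\frac{d}{d\A}$ to the string equation and use $[\frac{d}{d\A}, D_p] = 2 \ind$ to obtain
\[ 2 \ind \, Z_\beta(P) = \frac{d}{d\A} Z_\beta(\tch_3(1) P) - D_p \frac{d}{d\A} Z_\beta(P). \]
Substitute the $\frac{d}{d\A}$-formula just proved on both sides. The inactive ($i \neq 0$) pieces cancel by the string equation, and the only surviving contribution is the $i=0$ piece of the first sum, giving $2 \ind \, Z_\beta(P) = Z_\beta(\tch_2(\Delta_{B,1}) \tch_2(\Delta_{B,2}) P)$; the corresponding $i=0$ piece of the second sum vanishes because $\pi_*(1) = 0$. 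To evaluate the right-hand side, use the identity $\tch_2(\pi^* e_a) = \ch_2(\pi^* e_a) + \tfrac{1}{2}(e_a \cdot c_1(N_{B/X}))$, which follows from $\pi_*(c_2(T_X)) = -12 c_1(N_{B/X})$ (a consequence of Lemma~\ref{lemma:Chern classes} and $c_1(N_{B/X})^2 = 0$). Combined with the Poincar\'e duality identity $\sum_a (e_a \cdot \ell) e^a = \ell$ on $B$ and the ordinary divisor equation, this yields $Z_\beta(\tch_2(\Delta_{B,1}) \tch_2(\Delta_{B,2}) P) = \beta \cdot (\beta + c_1(N_{B/X})) \cdot Z_\beta(P)$, hence $\ind = \tfrac{1}{2}\beta \cdot (\beta + c_1(N_{B/X}))$.

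The principal obstacle is the extensive bookkeeping of signs, weights, factorials, and binomials needed to verify the cancellation of inactive terms and the simplification of the four active contributions. In particular, one must check that $\sigma_{0j}$ collapses to $\sigma_j$ (which uses that $1$ has even cohomological degree despite $\wt(1) = -1$), that the apparent $\Delta_B$-ordering mismatch between $\tch_2(\Delta_{B,1}) \tch_{k_j-1}(\gamma_j \Delta_{B,2})$ arising in the computation and $\tch_{k_j-1}(\gamma_j \Delta_{B,1}) \tch_2(\Delta_{B,2})$ in the statement is absorbed by the graded symmetry of $\Delta_B$, and that the various factorials and binomials reduce cleanly when $k_0 - 2 + \wt(1) = 0$.
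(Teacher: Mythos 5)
Your proposal is correct and follows exactly the paper's (much terser) argument: apply $\tfrac{d}{dG_2}$ to the normalized string equation, use $[\tfrac{d}{dG_2},D_p]=-2\tfrac{d}{d\A}$ together with the $G_2$-holomorphic anomaly equation for the first claim, then apply $\tfrac{d}{d\A}$ to the string equation and use $[\tfrac{d}{d\A},D_p]=2\,\mathrm{ind}$ for the index. The bookkeeping you carry out (the vanishing of the $i=0$ contributions of terms (II) and (III) via $\wt(1)=-1$, the evaluation of $\CE(1\boxtimes\gamma)$ and of $Z_{\beta}(\tch_2(\Delta_{B,1})\tch_2(\Delta_{B,2})P)$ using $c_1(N_{B/X})^2=0$) is exactly the computation the paper leaves implicit, and it checks out.
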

\begin{proof}
We use the string equation (Lemma~\ref{lemma:divisor_string_equation}):
\begin{equation} \label{string equation}
Z_{\beta}\left( \tch_3(1) \tch_{k_1}(\gamma_1) \cdots \tch_{k_n}(\gamma_n) \right) = p \frac{d}{dp} Z_{\beta}\left( \tch_{k_1}(\gamma_1) \cdots \tch_{k_n}(\gamma_n) \right).
\end{equation}
Applying $\frac{d}{dG_2}$ to both sides, using the Lie algebra relation
$\left[ \frac{d}{dG_2}, p \frac{d}{dp} \right] = -2 \frac{d}{d \A}$ and then the conjectural $G_2$-holomorphic anomaly equation,
implies the first claim.
For the second claim, start again with \eqref{string equation}, apply $\frac{d}{d\A}$ and use $[ \frac{d}{d\A}, p \frac{d}{dp} ] = 2 \ind$.
\end{proof}

\subsection{Further checks}
For all $\gamma \in H^{\ast}(X)$ and $\lambda \in H^2(B)$ 
the following equalities follow from the definition or the divisor equation:
\begin{align*}
 Z_{\beta}\left( \tch_0(\gamma) \tch_{k_1}(\gamma_1) \cdots \tch_{k_n}(\gamma_n) \right) & = \left( -\int_X \gamma \right) Z_{\beta}\left( \tch_{k_1}(\gamma_1) \cdots \tch_{k_n}(\gamma_n) \right) \\
Z_{\beta}\left( \tch_1(\gamma) \tch_{k_1}(\gamma_1) \cdots \tch_{k_n}(\gamma_n) \right) & = 0 \\
 Z_{\beta}\left( \tch_2(\lambda) \tch_{k_1}(\gamma_1) \cdots \tch_{k_n}(\gamma_n) \right)
& =
(\lambda \cdot (\beta + \frac{c_1(N_{e})}{2} )) Z_{\beta}\left( \tch_{k_1}(\gamma_1) \cdots \tch_{k_n}(\gamma_n) \right).
\end{align*}
A short but non-trivial computation shows that Conjecture~\ref{conj:HAE} is compatible with the above equations,
in the sense that if we apply $\frac{d}{dG_2}$ to both sides
and use the holomorphic anomaly equation, then we obtain the same result on both sides.

\section{First examples}
\label{sec:First examples}

\subsection{Calabi-Yau threefolds}
If $X$ is a Calabi-Yau threefold, 
then\footnote{Consider the sequence $0 \to T_{B} \to T_X|_{B} \to N_{B/X} \to 0$ and take the determinant.}
 $N_{B/X} = \omega_B$.
Hence the above conjectures imply that the series
\[ Z_{\beta} := Z_{\beta}(1) \]
is a (meromorphic) Jacobi-Form of weight $0$ and index $\frac{1}{2} \beta ( \beta + K_B )$.
Therefore Conjecture~\ref{conj:HAE} specializes to the Huang-Katz-Klemm conjecture of \cite{HKK}.

\subsection{The product $\p^2 \times E$}
\label{subsec:P2xE}
The following example was considered and computed in the second author's master thesis \cite{Schimpf}.
It is very different in nature from the Calabi-Yau case and hence constitutes an important piece of evidence for our conjecture.

Let $E$ be an elliptic curve and consider the trivial elliptic fibration
\[ X = \p^2 \times E, \quad \pi : X \to \p^2. \]
Consider the class of a line in $\p^2$ and of a point in $E$, respectively,
\[ H \in H^2(\p^2,\BZ), \quad \pt \in H^2(E,\BZ). \]
%

\begin{thm}[{\cite{Schimpf}}] We have
\begin{align*}
	Z_1\left( \ch_2( H^2\pt)\ch_2(H^2)\right) & = i\Theta\\
	Z_1\left( \ch_2( H^2\pt)\ch_2( H\pt)\right) & = 3iD_{\tau} \Theta \\
	Z_1\left( \ch_2( H\pt)^2 \ch_2(H^2)\right) & = 4iD_{\tau}\Theta \\
	Z_1\left( \ch_2( H\pt)^3\right) & = 3iD^2_{\tau}\Theta + 9i\frac{\left(D_{\tau}\Theta\right)^2}{\Theta} \\
	Z_1\left( \ch_3(H^2\pt )\right) & = iD_{z}\Theta = i\Theta \A \\
    Z_1\left( \ch_2(H^2 \pt) \ch_2(H \alpha) \ch_2(H \beta) \right) & = i D_{\tau}(\Theta) \\
	Z_2\left( \ch_2( H^2\pt)\ch_2(H^2)^4\right) & = \Theta^4\\
	Z_2\left( \ch_2(H^2\pt)^3\right)
	& = 3\Theta^3 D^2_z D_\tau  \Theta+3\Theta^2\left(D^2_z\Theta\right)D_\tau \Theta \\
& \quad -6\Theta^2\left( D_p D_\tau\Theta\right) D_p \Theta+3\Theta^2\left(D_\tau \Theta\right)^2 \\
	Z_2\left( \ch_2( H^2\pt)^2\ch_3(H^2)\right) & = 2\Theta^3 D_pD_\tau\Theta 
\end{align*}
\end{thm}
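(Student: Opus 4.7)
The proof follows by direct equivariant computation, exploiting that $X = \p^2 \times E$ carries two commuting group actions: the translation action of $E$ in the fiber direction, and the toric action of $T = (\BC^*)^2$ on $\p^2$. The plan is to evaluate each bracket by localization and then match the output to the stated theta-function expression.

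First, I would use the $E$-translation action. For a class $\widetilde\beta = dH + n[E]$ with $n\neq 0$, if no insertion carries a class from $H^{>0}(E)$, then $E$ acts freely on $P_{m,\widetilde\beta}(X)$ and the corresponding bracket vanishes, exactly as in the proof of Proposition~\ref{prop:PT0 check}. This restricts attention to brackets with $n = 0$, or to those in which the fiber point class $\pt \in H^2(E)$ appears among the insertions; in the latter case $E$-equivariant localization (or the fact that only $E$-invariant parts of the descendant classes contribute) reduces the integral to a finite sum supported over fiber-fixed loci.

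Next, I would apply $T$-localization on $\p^2$. The $T$-fixed stable pairs of base degree $d$ are supported on unions of toric $\p^1$'s in $\p^2$ times $E$-fibers, together with extra floating-point and fiber-multiplicity data. For $d = 1$ only a single toric line contributes up to the Weyl symmetry, and the computation reduces to the PT theory of the equivariant chart $\BC^2 \times E$ treated in Section~\ref{ex:C2xE}; summing over the fiber degree $n$ and the Euler characteristic $m$ assembles $\Theta(p,q)$ together with its derivatives $D_\tau \Theta$ and $D_p \Theta = \Theta\, \A$, which produces the asserted $Z_1$ identities. Each descendant insertion $\ch_k(\gamma)$ on $X$ translates, after this reduction, into a definite tautological operation on the vertex moduli, whose effect on the generating series is a specific differentiation in $p$ or $q$.

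The main obstacle is the $d = 2$ computation. Here stable pairs may be supported on an irreducible conic, on reducible unions of two toric lines, or on degenerate configurations involving extra fiber components; the two-component structure of reducible pairs is visible on the right in bilinear theta-derivative expressions such as $3\Theta^3 D_z^2 D_\tau \Theta + 3\Theta^2 (D_z^2\Theta) D_\tau\Theta - \cdots$. The bulk of the work lies in enumerating these strata, computing their equivariant virtual contributions (including the interaction of floating points with the descendants $\ch_k(\gamma)$), and collecting the raw integrals into closed polynomial expressions in $\Theta$ and its derivatives. Once this is carried out, the weight, index, and pole structure predicted by Conjectures~\ref{conj:QJac introduction} and~\ref{conj:HAE} serve as nontrivial consistency checks on the final formulas.
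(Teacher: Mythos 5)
First, note that the paper does not actually prove this theorem: it is imported wholesale from the second author's thesis \cite{Schimpf}, so there is no in-text argument to match yours against. Judged on its own terms, your localization outline has a genuine gap at its core. The reduction of the $d=1$ case to ``the equivariant chart $\BC^2\times E$ treated in Section~\ref{ex:C2xE}'' does not work: that section computes only the classes $d[E]$ of base degree zero, where the fixed pairs are $[\CO\to\pi_1^*\CO_\lambda]$ and the moduli space is $\Hilb^d(\BC^2)$. For $Z_1$ and $Z_2$ you need base degree $1$ and $2$, whose $T$-fixed loci involve horizontal components along (toric line)$\,\times E$ together with vertical fibers and embedded points; evaluating these requires a one-leg (and for $d=2$ a two-leg) ``elliptic vertex'' in the spirit of \cite{BK}, which is nowhere developed in the paper and is exactly the hard content. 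Likewise, the assertion that each $\ch_k(\gamma)$ ``translates into a specific differentiation in $p$ or $q$'' is the statement to be proved, not an input. Two smaller points: freeness of the $E$-translation action is a property of the moduli space alone and does not depend on the insertions (what depends on the insertions is whether the integral of a class pulled back from the quotient vanishes), and for $d=2$ the action is in fact not free --- $2$-torsion translations fix configurations such as $\ell\times\{e\}\cup\ell\times\{e+t\}$ --- though finite stabilizers still suffice for the vanishing argument.

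For comparison, the actual route in \cite{Schimpf} (as its title and the methodology the paper uses elsewhere, e.g.\ method (ii) in Section~\ref{sec:extended example K3xC}, indicate) is Gromov--Witten theoretic: one computes the GW invariants of $\p^2\times E$ via the product formula, reducing to Hodge-type integrals over moduli of maps to $E$ paired with tautological integrals on moduli of maps to $\p^2$, whose generating series are quasi-modular by \cite{character}; the stated PT identities are then obtained through the GW/PT descendent correspondence \cite{PaPix,MOOP} under $p=e^z$. If you want to salvage a direct PT localization proof, you would need to actually construct and evaluate the elliptic vertex contributions for base degrees $1$ and $2$, including the interaction of the floating vertical components with the descendents --- at present your write-up names the strata but does not compute any of them.
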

One checks directly that in all these cases Conjecture~\ref{conj:HAE} holds.

\subsection{The product $\BC^2\times E$} \label{ex:C2xE}
Consider the $\BG_m^2$ action on $\BC^2$ with tangent weights $t_1,t_2$ at the origin.
Consider the trivial elliptic fibration
\[ X = \BC^2\times E, \quad \pi : X \to \BC^2. \]
We have
\[ c(T_X) = (1+t_1) (1+t_2) = 1 + (t_1 + t_2) + t_1 t_2 \]
which shows that
\[ c_3(T_X \otimes \omega_X) = - (t_1 + t_2) t_1 t_2 \in H^{\ast}_{\BG_m^2}(X). \]
Hence the \emph{class} $c_3(T_X \otimes \omega_X)$ is non-zero but vanishes after restriction to $t_1=-t_2$.


Consider the equivariant modified descendent classes:
\[
\sum_{k \geq 0} \tch_k(\gamma) x^k = \frac{1}{S(x/\theta)} \sum_{k = 0}x^k \ch_k(\gamma)
\]
where $S(x) = (e^{x/2} - e^{-x/2})/x$ and $\theta^{-2} = -c_2(T_X)$. Note that this agrees with our conventions from the beginning of the paper in case the classes are non-equivariant. A localization argument shows the following:

\begin{prop}
Let $\gamma_1, \ldots, \gamma_n \in H^{\ast}(E)$. For $t_1=-t_2$, we have:
\begin{multline*}
\sum_{k_1, \ldots , k_n \geq 0}
Z^{\BC^2\times E}( \tch_{k_1}(\gamma_1) \cdots \tch_{k_n}(\gamma_n) ) x_1^{k_1} \cdots x_n^{k_n} \\
= \left( \int_{E} \gamma_1 \right) \cdots \left( \int_E \gamma_n \right)
x_1 \cdots x_n t_1^{-n}F_n(t_1x_1, \ldots, t_1x_n),
\end{multline*}
where we used the Bloch-Okounkov $n$-point correlation function \cite{character}:
\[
    F_n(x_1,\ldots,x_n) = \prod_{l\geq 1}(1-q^n) \sum_{\lambda}q^{|\lambda|}\prod_{l=1}^n \sum_{i\geq 1} e^{(\lambda_i-i+\frac{1}{2})x_l}.
\]
Moreover, Conjectures \ref{conj:QJac introduction} and \ref{conj:HAE introduction} hold in this case
\end{prop}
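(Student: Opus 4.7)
The plan is to apply $\mathbb{T}=\mathbb{G}_m^2$-equivariant virtual localization to $P_{n,d[E]}(\mathbb{C}^2 \times E)$, specialize at $t_1=-t_2$, and identify the resulting sum over torus-fixed loci with the Bloch-Okounkov $n$-point function $F_n$. A $\mathbb{T}$-fixed stable pair is set-theoretically supported on $\{0\}\times E$, and by the standard analysis of $\mathbb{T}$-fixed stable pairs on $\mathbb{C}^2\times C$ (the one-leg PT vertex) is determined combinatorially by a partition $\lambda$ together with symmetric-product data on $E$ encoding the variation of Euler characteristic. The fixed locus thus decomposes as a disjoint union of components indexed by partitions, and this decomposition is the starting point of the computation.

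The virtual normal bundle of each component is computed from the $\mathbb{T}$-character of $R\mathrm{Hom}(I^\bullet, I^\bullet)_0$. Under the anti-diagonal specialization $t_1=-t_2=\theta^{-1}$, the equivariant class $c_3(T_X \otimes \omega_X)$ vanishes in $H_{\mathbb{T}}^\ast(X)$, which produces the cancellations ensuring that the final answer is polynomial in $\theta$ with clean Jacobi structure. The modified descendents $\tch_k(\gamma_i)$ with $\gamma_i \in H^\ast(E)$ evaluate via an equivariant Grothendieck-Riemann-Roch computation; since each $\gamma_i$ is pulled back from $E$, the result at the $\lambda$-indexed fixed component reduces to a residue on $E$ of the $\mathbb{T}$-character of the universal sheaf. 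The exponential generating function $\sum_k x^k\tch_k(\gamma_i)$ then produces, at the fixed point labeled by $\lambda$, precisely the Maya-diagram character $\sum_{i \geq 1} e^{(\lambda_i-i+1/2)\theta x}$ weighted by $\int_E \gamma_i$.

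Assembling the localization contributions together with the MacMahon-type factor $\prod_{l \geq 1}(1-q^l)$ coming from the degree zero contribution of Proposition~\ref{prop:PT0 check} yields the formula in the statement, with the prefactor $x_1\cdots x_n t_1^{-n}$ reflecting the $\theta^{-2}=-c_2(T_X)^{-1}$ normalization in the definition of $\tch_k$. For the two conjectures, I would then invoke the Bloch-Okounkov theorem: the coefficients of $\prod x_l^{k_l}$ in $\prod(1-q^l)F_n$ are shifted symmetric functions of the parts of $\lambda$ and hence quasi-modular forms, and the presence of the $e^{(\cdots)x_l}$ exponentials promotes these to quasi-Jacobi forms of the weight and index predicted by Conjecture~\ref{conj:QJac introduction}. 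The holomorphic anomaly equation of Conjecture~\ref{conj:HAE introduction} then translates into a differential identity for $F_n$ which can be verified directly from the explicit character formula by applying $d/dG_2$ to both sides of the localization identity.

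The main obstacle is the combinatorial matching in Step 2 between the $\mathbb{T}$-fixed descendent evaluations and the shifted Maya character $\sum_{i\geq 1} e^{(\lambda_i-i+1/2)x}$: the shift by $-i+1/2$ and the appearance of $\prod(1-q^l)$ must be tracked carefully through the virtual normal bundle contribution and the $\theta^{-2}$ normalization in the definition of $\tch_k$, and signs must be controlled at the anti-diagonal specialization where several apparent poles at $t_1+t_2=0$ need to be resolved using $c_3(T_X\otimes\omega_X)=0$. Once this identification is rigorously established, the remaining quasi-Jacobi and holomorphic anomaly statements reduce to classical Bloch-Okounkov theory.
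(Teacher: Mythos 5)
Your overall strategy (torus localization plus identification with the Bloch--Okounkov character) is the same as the paper's, but there is a genuine gap at the very first step: you never explain why only the Euler-characteristic-zero part of the generating series survives. The right-hand side of the claimed formula has no $p$-dependence, so every contribution from $P_{\chi,d[E]}(\BC^2\times E)$ with $\chi\neq 0$ must vanish. Your description of the fixed loci as ``a partition $\lambda$ together with symmetric-product data on $E$ encoding the variation of Euler characteristic'' suggests you intend to sum over these higher-$\chi$ strata, which are genuinely nonempty and positive-dimensional; without a vanishing argument your localization sum would carry nontrivial powers of $p$ and could not match the stated answer. The paper disposes of this first: for $\chi<0$ the moduli space is empty, and for $\chi>0$ the free translation action of $E$ on $P_{\chi,\beta}(X)$ produces a nowhere-vanishing vector field whose Serre dual is a cosection, killing the virtual class (the argument of \cite[Section 4.3]{PP_Jap}). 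Only then does one reduce the $\chi=0$ case to the tautological integral over $\Hilb^d(\BC^2)$ via $P_{0,d[E]}(X)\cong \Hilb^d(\BC^2)$, where your combinatorial matching with the shifted Maya character $\sum_{i\ge 1}e^{(\lambda_i-i+\frac12)t_1x_l}$ indeed goes through by telescoping the sum over boxes of $\lambda$ into a sum over rows.

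Two smaller points. First, the regularity at $t_1+t_2=0$ is not ``resolved using $c_3(T_X\otimes\omega_X)=0$'' as a technical device; once one is on the Hilbert scheme the specialization is manifestly regular, and the $c_3$ vanishing is only the conceptual reason the conjectures are expected to apply. Second, for the ``Moreover'' part your phrasing about the exponentials ``promoting'' quasi-modular forms to quasi-Jacobi forms is off: since only $\chi=0$ contributes there is no elliptic variable left, the $x$-coefficients of $F_n$ are quasi-modular forms (hence index-$0$ quasi-Jacobi forms) by Bloch--Okounkov, and the holomorphic anomaly equation is exactly the known factorwise $G_2$-derivative identity for $F_n$ from Pixton's thesis \cite[Lem.~4.2.2]{pixtonthesis}, which you should cite rather than re-derive.
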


\begin{proof}
Assume that $t_1=-t_2$.
The moduli space of stable pairs $P_{\chi,\beta}(X)$ is empty whenever $\chi<0$.
For $\chi>0$ the natural action by $E$ on $P_{\chi,\beta}(X)$ by translation is free.
Hence taking derivative, we obtain a global non-zero vector field on $P_{\chi,\beta}(X)$,
whose Serre-dual defines a non-zero cosection of the moduli space.
This implies that the virtual class of the moduli space (and hence all invariants) in case $\chi>0$ vanish, see \cite[Section 4.3]{PP_Jap} for more details on this argument.
For $\chi=0$, as explained in the proof of Lemma \ref{lemma:basic computation}
we have the isomorphism $P_{0,d[E]}(X) = \Hilb^d(\BC^2)$ and
\[
    Z^{\BC^2\times E}( \ch_{k_1}(\gamma_1) \ldots \ch_{k_n}(\gamma_n) ) = \prod_{n\geq 1}(1-q^n) \int_{\Hilb^d(\BC^2)} e(\T_{\Hilb^d(\BC^2)}) \ch_{k_1}(\gamma_1) \ldots \ch_{k_n}(\gamma_n) ).
\]
%
%
A $\BG_m^2$-fixed stable pair corresponding to a partition $\lambda$ is of the form $[\CO \to (\pi_1)^*\CO_{\lambda}]\in D^b(\BC^2\times E)$ with $\CO_\lambda = \bigoplus_{(i,j)\in \lambda} \CO_E T_1^i T_2^j$ where we sum over all the squares in the Young diagram of $\lambda$. The contribution of an insertion is thus:
\begin{align*}
    \sum_{k\geq 1}\ch_k(\gamma) x^k &= \sum_{k\geq 1}\left(\int_{\BC^2\times E}\ch_k(\pi_1^*\CO_{\lambda})\cup\pi_2^*(\gamma)\right) x^k\\
    &= \left(\int_E \gamma \right) \frac{(1-e^{-t_1 x})(1-e^{-t_2 x})}{t_1 t_2} \sum_{(i,j)\in \lambda}e^{(-it_1-jt_2)x}
\end{align*}
and 
\[ \ch_0(\gamma) = -\frac{1}{t_1 t_2} \int_E \gamma .\]
This yields:
\begin{align}
\notag     &\sum_{k_1, \ldots , k_n \geq 0} 
Z^{E \times \BC^2}( \tch_{k_1}(\gamma_1) \cdots \tch_{k_n}(\gamma_n) ) x_1^{k_1} \cdots x_n^{k_n} \\
\notag &= \left( \int_{E} \gamma_1 \right) \cdots \left( \int_E \gamma_n \right)\prod_{n\geq 1} (1-q^n) \prod_{l=1}^n \frac{1}{t_1 t_2 S(i\sqrt{t_1 t_2} x_l)} \\
\notag &\times\sum_{\lambda} q^{|\lambda|} \prod_{l=1}^n \left( (1-e^{-x_l t_1})(1-e^{-x_l t_2})\sum_{(i,j)\in\lambda} e^{(-it_1-jt_2)x_l}-1\right) \\
\notag & = \left( \int_{E} \gamma_1 \right) \cdots \left( \int_E \gamma_n \right)\prod_{n\geq 1} (1-q^n)\prod_{l=1}^n \frac{1}{t_1t_2S(i\sqrt{t_1t_2}x_l)}\\
\notag &\times\sum_{\lambda}q^{|\lambda|} \prod_{l=1}^n \left( (1-e^{t_1x_l})\sum_{i\geq 1} \left(e^{(-\lambda_it_2-it_1)x_l}-e^{-it_1x_l})\right) - 1 \right)\\
\label{4ygfg} & = \left( \int_{E} \gamma_1 \right) \cdots \left( \int_E \gamma_n \right)\prod_{n\geq 1} (1-q^n)\prod_{l=1}^n \frac{1-e^{x_lt_1}}{t_1t_2S(i\sqrt{t_1t_2}x_l)}\\
\notag &\times\sum_{\lambda}q^{|\lambda|} \prod_{l=1}^n \sum_{i\geq 1} e^{(-\lambda_it_2-it_1)x_l}
\end{align}
In case $t_1=-t_2$, this is:
\[
    \left( \int_{E} \gamma_1 \right) \cdots \left( \int_E \gamma_n \right)\prod_{n\geq 1} (1-q^n)\prod_{l=1}^n \frac{x_l}{t_1}
    \sum_{\lambda}q^{|\lambda|} \prod_{l=1}^n \sum_{i\geq 1} e^{(\lambda_i-i+\frac{1}{2})t_1x_l},
\]
which shows the first part.

For the second part, it was shown in \cite{character}
that the $x$-coefficients of the $n$-point  correlation function
are quasi-modular forms (and hence index $0$ quasi-Jacobi forms).
Our holomorphic anomaly equation corresponds to the identity \cite[Lem.~4.2.2]{pixtonthesis}:
\begin{align*}
    \frac{d}{dG_2}F_n(x_1,\cdots,x_n) &= (x_1+\cdots+x_n)^2 F_n(x_1,\cdots,x_n) \\
    &-2 \sum_{1\leq i<j\leq n} (x_i+x_j) F_{n-1}(x_i+x_j,x_1,\cdots,\hat{x_i},\cdots,\hat{x_j},\cdots,x_n),
\end{align*}
where we take the $G_2$-derivatives factorwise.
\end{proof}

\section{Extended example: The product $K3 \times \BC$}
\label{sec:extended example K3xC}
The goal of this section is to use the holomorphic anomaly equation
to find conjectural formulas for the stationary Pandharipande-Thomas theory of
$S \times \BC$, where $S$ is a K3 surface.
The final formulas are given in Sections~\ref{subsec:general formula} and~\ref{subsec:ABC}.

\subsection{Definition and holomorphic anomaly equations}
Let $S$ be an algebraic K3 surface.
We let $\BG_m$ act on $\BC$ with weight $t$ on the tangent space at $0$
and consider the equivariant Pandharipande-Thomas theory of 
\[ X=S \times \BC. \] 
The inclusion of the fiber over $0 \in \BC$ is denoted by
\[ \iota : S \hookrightarrow X. \]
We will usually identify curve classes on $S$ with their pushforward by $\iota$.

Because of the existence of a symplectic form on $S$,
it is well-known that the standard virtual class
on $P_{n,\beta}(X)$ vanishes for $\beta \in H_2(S,\BZ)$ non-zero, see \cite{MPT}.
Instead Pandharipande-Thomas invariants of $X$ are defined by a reduced virtual class of dimension $1$,
\[ [ P_{n,\beta}(X) ]^{\red} \in A_{\BG_m, 1}( P_{n,\beta}(X) ). \]
We will use the notation:
\begin{equation} \label{reduced invariants}
\left\langle \ch_{k_1}(\gamma_1) \cdots \ch_{k_n}(\gamma_n) \right\rangle^{X, \PT,\red}_{n,\beta}
=
 \int_{[ P_{n,\beta} (X) ]^{\red}} \prod_i \ch_{k_i}(\gamma_i) \ \ \in \BQ(t)
\end{equation}
where the integral on the right stands for an application of the virtual localization formula.

\begin{rmk}(\textbf{Convention on equivariant parameter}) \label{rmk:convention}
If the $\gamma_i$ are homogeneous of complex degree $\deg(\gamma_i)$, that is $\gamma_i \in H^{2 \deg(\gamma_i)}(S)$ then we have
\[
\left\langle \ch_{k_1}(\gamma_1) \cdots \ch_{k_n}(\gamma_n) \right\rangle^{X, \PT,\red}_{n,\beta}
= c \cdot t^{-1 + \sum_k (k_i + \deg(\gamma_i) - 3)}
\]
for some $c \in \BQ$. 
Since there is no additional information contained in the exponent of $t$,
we hence usually set 
$t=1$ below.
\end{rmk}


Let $S \to \p^1$ be an elliptic K3 surface with section $B$ and fiber class $F$. We take $W=B+F$.
The threefold $X$ becomes elliptically fibered via the projection:
\[ \pi: X \to \p^1 \times \BC. \]
By \cite{QuasiK3} the invariants \eqref{reduced invariants} for imprimitive $\beta$ are determined by those where $\beta$ is primitive through a multiple cover formula.
By deformation invariance it is hence enough to consider the invariants \eqref{reduced invariants}  for the classes $\beta = B+dF$. We thus define the generating series
\begin{equation*}
 \left\langle \ch_{k_1}(\gamma_1) \cdots \ch_{k_n}(\gamma_n) \right\rangle^{\PT}
:=
\sum_{d \geq 0} \sum_{n \in \BZ} (-p)^n q^{d-1} \left\langle \ch_{k_1}(\gamma_1) \cdots \ch_{k_n}(\gamma_n) \right\rangle^{\PT,X}_{n, B + d F}. 
\end{equation*}

The Chern classes of the tangent bundle are
\[ c(T_X) = (1+t)(1+c_2(S)) = 1 + t + c_2(S) + t c_2(S). \]
In particular, $K_X = -t$ and
\[ c_3(T_X \otimes \omega_X) = 0 \in H_{\BG_m}^{\ast}(X). \]

This means that we should expect that Conjecture~\ref{conj:HAE} holds for $S \times \BC$,
as soon as we account for using the reduced virtual class
by introducing some modifications.
The modifications needed are discussed in \cite[Sec.7]{elfib}.
This comes out as follows:

\begin{conj} $\langle \tch_{k_1}(\gamma_1) \cdots \tch_{k_n}(\gamma_n) \rangle^{\PT}$
is a quasi-Jacobi form of index $-1$ and weight $-10 + \sum_i (k_i - 1 + \wt(\gamma_i))$,
of the form
\[ \left\langle \tch_{k_1}(\gamma_1) \cdots \tch_{k_n}(\gamma_n) \right\rangle^{\PT} = \frac{\varphi(p,q)}{\Theta^2(p,q) \Delta(q)} \]
for some $\varphi \in \QJac_{\ast,0}$.
\end{conj}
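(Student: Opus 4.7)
The plan is to derive the claim from the reduced version of Conjecture~\ref{conj:HAE}, following the K3 modifications outlined in \cite{elfib}, applied to the elliptic fibration $\pi : X = S \times \BC \to \p^1 \times \BC$ coming from the elliptic K3 structure $S \to \p^1$. The section $\iota : \p^1 \times \BC \hookrightarrow X$ has normal bundle $N = \pi_1^{\ast} N_{B/S}$, so $c_1(N) = -2[\pt]_{\p^1}$ pulled back to $B' := \p^1 \times \BC$. Since $H_2(B', \BZ) = \BZ \cdot [\p^1]$, the pushforward $\pi_{\ast}(B + dF) = [\p^1] =: \beta$ is primitive.

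With this data the pole structure predicted by Conjecture~\ref{conj:HAE}(a) collapses to a single term: the only decomposition of $\beta$ is the trivial one, of divisibility $1$, so we get exactly one factor $\Theta(p,q)^2$ in the denominator, and the universal prefactor contributes $\Delta(q)^{\frac{1}{2} c_1(N)\cdot \beta} = \Delta(q)^{-1}$ since $c_1(N) \cdot \beta = -2$ in $B'$. Together these reproduce the denominator $\Theta^{2}\Delta$. The conjectured index $-1$ matches the general formula $\tfrac{1}{2}\beta(\beta + c_1(N)) = \tfrac{1}{2}(0 - 2) = -1$, using that $[\p^1]$ is a fiber of $B' \to \BC$ and hence has self-intersection zero.

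For the weight, one runs the Lie-algebra derivation of Proposition~\ref{prop:weight} in the reduced setting. The divisor equation for $\tch_2(W)$ picks up additional $G_2$-contributions from the $\Delta$ and $\Theta$ prefactors in the reduced normalization replacing $\left\langle 1\right\rangle^{\PT}_0$, and $K_X = -t$ restricts to zero against the compact curve $\beta$ (after setting $t=1$ per Remark~\ref{rmk:convention}). Tracking these corrections through $[d/dG_2, D_{\tau}] = -2\wt$ and comparing both sides produces the shifted weight $-10 + \sum_i(k_i - 1 + \wt(\gamma_i))$, the $-10$ coming precisely from the combined effect of $\Theta^{-2}$ (weight $+2$) and $\Delta^{-1}$ (weight $-12$) in the reduced prefactor.

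The main obstacle is that both the reduced holomorphic anomaly equation and the ambient quasi-Jacobi form property entering the argument above are themselves conjectural. A self-contained proof would require either (i) establishing the reduced version of Conjecture~\ref{conj:HAE} by translating from the GW side via the descendent GW/PT correspondence, starting from known reduced GW statements for elliptic K3 (such as the Katz-Klemm-Vafa formula and its descendent extensions), or (ii) running the $\BG_m$-equivariant localization analysis of Section~\ref{ex:C2xE} with $\BC^2$ replaced by the K3 surface $S$, expressing the reduced brackets in terms of a K3-weighted analogue of the Bloch-Okounkov $n$-point function. Approach (ii) would reduce the problem to the quasi-Jacobi form property of these $n$-point functions, but controlling the reduced virtual class away from the torus-fixed locus is the essential technical difficulty.
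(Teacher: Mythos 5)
This statement is a conjecture which the paper does not prove; it only motivates it in the preceding paragraph by specializing the main conjectures to the fibration $S \times \BC \to \p^1 \times \BC$ with the reduced-class modifications of \cite[Sec.7]{elfib}, which is exactly what you do: $c_1(N)\cdot\beta=-2$ and $\beta^2=0$ give the prefactor $\Delta^{-1}$, the single $\Theta^{-2}$ pole and index $-1$, while the reduced normalization $\langle 1\rangle^{\PT}=-1/(\Theta^2\Delta)$ accounts for the weight shift $-10$. Your derivation matches the paper's reasoning (and correctly flags that all inputs are themselves conjectural), so there is nothing further to check.
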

Here the weight is explicitly computed by
\[
\wt(\gamma) =
\begin{cases}
1 & \text{ if } \gamma \in \{ \pt, W \} \\
-1 & \text{ if } \gamma \in \{ 1, F \} \\
0 & \text{ if } \gamma \perp \{ \pt, 1, W, F \}.
\end{cases}
\]

We also expect the holomorphic anomaly equations:
\begin{conj} \label{conj:HAE K3xC}
Let $\mathsf{B} = \p^1 \times \BC$ be the base of the elliptic fibration $X \to \p^1 \times \BC$.
	\begin{align*}
		\frac{d}{d A} \left\langle \tch_{k_1}(\gamma_1) \cdots \tch_{k_n}(\gamma_n) \right\rangle^{\PT}
		= & 
		\sum_{i = 1}^{n} \sigma_i
		\left\langle \ \tch_{k_i-1}( \gamma_i \Delta_{\mathsf{B},1} ) \tch_{2}( \Delta_{\mathsf{B},2} )  \prod_{\ell \neq i} \tch_{k_{\ell}}(\gamma_{\ell}) \ \right\rangle^{\PT} \\
		& +\sum_{i=1}^{n} \sigma_i \left\langle \ \tch_{k_i+1}( \pi^{\ast} \pi_{\ast}(\gamma_i) ) \prod_{\ell \neq i} \tch_{k_{\ell}}(\gamma_{\ell}) \ \right\rangle^{\PT}.
	\end{align*}
and
		\begin{small}
	\begin{align*}
		& \frac{d}{dG_2} \left\langle \tch_{k_1}(\gamma_1) \cdots \tch_{k_n}(\gamma_n) \right\rangle^{\PT} = \\
		& -2 \sum_{i < j} \left\langle \tch_{k_i-1}(\gamma_i \Delta_{\mathsf{B},1})  \tch_{k_j-1}(\gamma_j \Delta_{\mathsf{B},2}) \prod_{\ell \neq i,j} \tch_{k_{\ell}}(\gamma_{\ell}) \right\rangle^{\PT} \\
		& - \sum_{i=1}^{n} \left\langle \tch_{k_i-2}( \gamma_i \cdot c_2(\mathsf{B})) \prod_{\ell \neq i} \tch_{k_{\ell}}(\gamma_{\ell}) \right\rangle^{\PT} \\
		& + \sum_{\substack{i=1, \ldots, n \\ m_1 + m_2 = k_i}} \frac{(-1)^{(1+\mathrm{wt}_L)(1+\mathrm{wt}_R)} (m_1 - 1 + \mathrm{wt}^L)! (m_2 - 1 + \mathrm{wt}^R)!}{(k_i - 2 + \mathrm{wt}(\gamma_i))!} \left\langle \tch_{m_1} \tch_{m_2} ( \mathcal{E}( K_X \gamma_i )) 
		\prod_{\ell \neq i} \tch_{k_{\ell}}(\gamma_{\ell}) \right\rangle^{\PT} \\
		& -2 \sum_{i < j} (-1)^{(1+\wt(\gamma_i))(1 + \wt(\gamma_j))} \binom{ k_i + k_j - 4 + \mathrm{wt}(\gamma_i) + \mathrm{wt}(\gamma_j) }{ k_i-2 + \mathrm{wt}(\gamma_i), \ k_j - 2 + \mathrm{wt}(\gamma_j) } 
		\left\langle \tch_{k_i+k_j-2}( \mathcal{E}( \gamma_i \boxtimes \gamma_j )) \prod_{\ell \neq i,j} \tch_{k_{\ell}}(\gamma_{\ell}) \right\rangle^{\PT} \\
& - \sum_{a,b} (g^{-1})_{ab} \left\langle T_{e_a} T_{e_b} \left( \tch_{k_1}(\gamma_1) \cdots \tch_{k_n}(\gamma_n) \right) \right\rangle^{\PT}
	\end{align*}
\end{small}
where
\begin{itemize}
\item the $e_a$ form a basis of $\{ F, B \}^{\perp} \subset H^2(S,\BQ)$ and
$g_{ab} = \langle e_a, e_b \rangle$ is the pairing matrix,
\item where for any $\alpha \in \{ W, F \}^{\perp} \subset V  $ 
we let $T_{\alpha}$ act on the descendent algebra 
as a derivation with action on the generators
$T_{\alpha} \ch_k(\gamma) 
= \ch_k( (F \cdot \gamma) \alpha - (\alpha \cdot \gamma) F)$.
In particular, if we let $\sigma = \sum_{a,b} (g^{-1})_{ab}  T_{e_a} T_{e_b}$ and $\alpha' \in \{ W, F \}^{\perp}$ we have
\begin{gather*}
\sigma( \ch_k(\gamma)) = -20 (\gamma \cdot F) \ch_k(F) \\
\sigma( \ch_k(W) \ch_{\ell}(W)) = 2 \sum_{a,b} g^{-1} \ch_k(e_a) \ch_{\ell}(e_b) - 20 \ch_k(F) \ch_{\ell}(W) - 20 \ch_k(W) \ch_{\ell}(F) \\
\sigma( \ch_k(W) \ch_{\ell}(\alpha) ) = -2 \ch_k(\alpha) \ch_{\ell}(F) - 20 \ch_k(F) \ch_{\ell}(\alpha) \\
\sigma( \ch_k(\alpha) \ch_{\ell}(W) ) = -2 \ch_k(F) \ch_{\ell}(\alpha) - 20 \ch_k(\alpha) \ch_{\ell}(F) \\
\sigma( \ch_k(\alpha) \ch_{\ell}(\alpha') ) = 2 (\alpha \cdot \alpha') \ch_k(F) \ch_{\ell}(F)
\end{gather*}
\item The class of the diagonal in $H_{\BG_m}(\BC \times \BC)$ 
with respect to the diagonal $\BG_m$-action is
$\Delta_{\BC} = \frac{1}{t} [ (0,0) ]$.
Hence we have
\[ \Delta_{\mathsf{B}} = \frac{1}{t} [(0,0)] (F_1 + F_2) \]
where $F_i = \pr_i^{\ast}(F)$. 
Moreover, $c_2(\mathsf{B}) = 2 t F$.
Also recall that $K_X = -t$.
\end{itemize}
\end{conj}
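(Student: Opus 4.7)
The plan is to adapt the strategy used in Section \ref{ex:C2xE} for $\BC^2\times E$, incorporating the reduced virtual class modifications of \cite[Sec.~7]{elfib}. I would pursue two complementary routes in parallel. The first route goes through the GW/PT descendent correspondence \cite{PaPix,MOOP}: translate both sides of the conjectured equations into generating series of stationary reduced Gromov-Witten invariants of $S\times\BC$, and then match them against the reduced GW holomorphic anomaly equations of \cite{elfib,HAE} by comparing coefficients in $z$ under $p=e^z$. The second route is direct: use $\BG_m$-localization to reduce equivariant reduced PT integrals on $S\times\BC$ to integrals on $\Hilb^n(S)$, then use Lehn-type operator formulas together with the quasi-Jacobi form structure of K3 Hilbert scheme generating functions from \cite{QuasiK3} to verify both equations term by term.

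For the $\tfrac{d}{dA}$-equation, the argument should be essentially identical to the case developed in Section \ref{sec:HAE}, since the reduction of the virtual class leaves the string equation $\langle\tch_3(1)\prod_i\tch_{k_i}(\gamma_i)\rangle^{\PT}=D_p\langle\prod_i\tch_{k_i}(\gamma_i)\rangle^{\PT}$ intact. Applying $\tfrac{d}{dG_2}$ to the string equation and using the commutator $[\tfrac{d}{dG_2},D_p]=-2\tfrac{d}{dA}$ from \eqref{eq:comm relations 1}, the $\tfrac{d}{dA}$-equation would then follow formally from the $\tfrac{d}{dG_2}$-equation. Thus the entire problem reduces to establishing the $\tfrac{d}{dG_2}$-equation.

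The novel feature of the $\tfrac{d}{dG_2}$-equation is the final $\sigma$-correction term involving the orthogonal complement $\{F,B\}^{\perp}\subset H^2(S,\BQ)$, which I expect arises precisely from the difference between the standard and reduced virtual classes, paralleling the Noether-Lefschetz-style derivation of reduced GW holomorphic anomaly equations on K3-fibered geometries in \cite[Sec.~7]{elfib}. Concretely, the cosection cutting down the virtual dimension by one contributes, upon $G_2$-differentiation, an extra K\"unneth term on the diagonal $\Delta_S$ restricted to $\{F,B\}^{\perp}$, and one can check that this agrees with the action of $\sigma=\sum_{a,b}(g^{-1})_{ab}T_{e_a}T_{e_b}$ on descendent insertions as stated. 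The hard part will be to turn this heuristic into a rigorous identity: either (a) by proving enough of the GW/PT descendent correspondence for stationary reduced invariants of $S\times\BC$ to invoke the GW-side result of \cite{elfib}, or (b) by a direct moduli-theoretic cosection analysis, extending the argument of \cite[Sec.~4.3]{PP_Jap} used in the proof for $\BC^2\times E$. Both approaches are delicate, and the main obstacle is that the cosection here varies nontrivially with the choice of K3 polarization, so the $\sigma$-term must be tracked carefully through the deformation.
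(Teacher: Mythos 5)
The statement you are trying to prove is \emph{Conjecture}~\ref{conj:HAE K3xC}: the paper does not prove it, and offers no proof to compare against. The authors arrive at it heuristically, by specializing Conjecture~\ref{conj:HAE} to $X = S\times\BC$ (where $c_3(T_X\otimes\omega_X)=0$ equivariantly) and adding the corrections for the reduced virtual class taken from \cite[Sec.~7]{elfib}; the only support given is consistency with explicitly computed series ($A_k$, $B_k$, $C_{k\ell}$, the KKV formula) obtained by localization and by GW-side computations. Your first route --- transporting the reduced Gromov--Witten holomorphic anomaly equations through the GW/PT descendent correspondence --- is exactly the mechanism by which the authors \emph{found} the conjecture, and they state explicitly in the introduction why it is not a proof: the GW-side holomorphic anomaly equation is itself only a conjecture, the quasi-Jacobi form property needed to separate $z$-coefficients is conjectural, and the explicit form of the descendent GW/PT correspondence is only known for essential descendents on toric targets, so the three ingredients are never simultaneously available. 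Your own text concedes that "the hard part will be to turn this heuristic into a rigorous identity," which is precisely the part that is missing; as written, the proposal is a research plan, not a proof.

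Two further concrete issues. First, in your second route the $\BG_m$-localization on $S\times\BC$ reduces reduced PT integrals to the stable pairs moduli space $P_n(S,\beta)$ (smooth of dimension $\beta^2+n+1$, with obstruction bundle $\Omega_{P_n(S,\beta)}\otimes\Ft$ as in Proposition~\ref{prop:virtual class}), not to $\Hilb^n(S)$; and even after localization one only obtains closed-form answers for irreducible $\beta$ and low-degree insertions, which is how the paper produces its checks, not a general argument. Second, your claim that the $\frac{d}{dA}$-equation follows formally from the $\frac{d}{dG_2}$-equation via the string equation and $[\frac{d}{dG_2},D_p]=-2\frac{d}{dA}$ needs an extra verification in the reduced setting: the new $\sigma$-correction term must be shown to interact correctly with the insertion $\tch_3(1)$ (note $T_\alpha\ch_3(1)=0$ since $F\cdot 1=\alpha\cdot 1=0$ in the relevant degrees, so this is plausible, but it is a computation you must do, not an automatic consequence of the unreduced case).
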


The most basic example is given by the Katz-Klemm-Vafa formula \cite{MPT}:
\[
\left\langle 1 \right\rangle^{\PT}
=
\frac{-1}{\Theta^2(p,q) \Delta(q)}.
\]

In the remainder of this section we will use Conjecture~\ref{conj:HAE K3xC}
to derive conjectural formulas
for 
\[ \left\langle \ch_{k_1}(\gamma_1) \cdots \ch_{k_n}(\gamma_n) \right\rangle^{\PT} \]
in the stationary case, that is whenever $\deg_{\BC}(\gamma_i)>0$ for all $i$.
This is done in several steps and uses the following two methods:
\begin{enumerate}
\item[(i)] Direct evaluations by applying the localization formula (Section~\ref{subsec:evaluation by localization})
\item[(ii)] GW/PT correspondence \cite{PaPix} (proven for $K3 \times \BC$ for primitive $\beta$ in \cite{Marked}) and computations on the Gromov-Witten side.
\end{enumerate}
The end result is given in Sections~\ref{subsec:general formula} and~\ref{subsec:ABC}.

\subsection{Evaluation by localization} \label{subsec:evaluation by localization}
Given a primitive effective class $\beta \in H_2(S,\BZ)$,
by deformation invariance we may assume that $\beta$ is irreducible.
In this case the moduli space of stable pairs satisfies:
\[ P_{n,\iota_{\ast} \beta}(X) = P_{n,\beta}(S) \times \BC. \]
Moreover, $P_{n,\beta}(S)$ is smooth of dimension $\beta^2 + n + 1$, see \cite{MPT} for references.
Let
\[ \iota : P_n(S,\beta) \times 0 \to P_n(X,\beta) \]
denote the inclusion of the zero fiber.

\begin{prop}[{\cite[Sec.1.2]{MPT}}] \label{prop:virtual class}
We have
\[ [ P_n(X,\iota_{\ast} \beta) ]^{\text{red}} 
= \iota_{\ast}\left( \frac{e(\Omega_{P_n(S,\beta)} \otimes \Ft)}{t} [P_n(S,\beta)] \right) \]
\end{prop}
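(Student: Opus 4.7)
The plan is to apply $\BG_m$-equivariant virtual localization. By deformation invariance we may assume $\beta \in H_2(S,\BZ)$ is irreducible, in which case every one-dimensional subscheme of $X = S \times \BC$ of class $\iota_*\beta$ is scheme-theoretically contained in a single fiber over $\BC$. This yields the product decomposition
\[
P_n(X, \iota_*\beta) \cong P_n(S, \beta) \times \BC,
\]
which is smooth of dimension $\beta^2 + n + 2$, with $\BG_m$-fixed locus equal to $P_n(S, \beta) \times \{0\}$ included via $\iota$.

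The core computation is the $\BG_m$-equivariant virtual tangent complex at a fixed point $(I_S, 0)$. Write $I^\bullet = [\CO_X \to \iota_{0*} F_S]$ for the associated stable pair complex on $X$. Using the Koszul-type sequence
\[
0 \to \CO_X \otimes \Ft^{-1} \to \CO_X \to \iota_{0*} \CO_S \to 0,
\]
together with the identification $N_{S \times 0 / X} \cong \CO_S \otimes \Ft$, a direct calculation of $R\Hom_X(I^\bullet, I^\bullet)_0$ yields
\begin{align*}
\Ext^1_X(I^\bullet, I^\bullet)_0 &\cong \Ext^1_S(I_S, I_S)_0 \oplus \Ft, \\
\Ext^2_X(I^\bullet, I^\bullet)_0 &\cong \Ext^2_S(I_S, I_S)_0 \oplus \Ext^1_S(I_S, I_S)_0 \otimes \Ft.
\end{align*}
The extra $\Ft$ in $\Ext^1$ encodes the translation of the fiber along $\BC$, and the second summand in $\Ext^2$ is the Serre-dual obstruction. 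Since $P_n(S, \beta)$ is smooth with $\Ext^1_S(I_S, I_S)_0 = T_{P_n(S,\beta)}$ and, for irreducible $\beta$, $\Ext^2_S(I_S, I_S)_0 \cong \BC$ (the one-dimensional trivial factor coming from the holomorphic symplectic form on $S$), the unreduced virtual tangent class at $(I_S,0)$ equals
\[
T^{\vir}|_{(I_S, 0)} = T_{P_n(S,\beta)} + \Ft - \BC - T_{P_n(S,\beta)} \otimes \Ft,
\]
of rank zero in agreement with the expected dimension.

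To pass to the reduced theory, we discard the trivial $\BC$ summand of $\Ext^2_S(I_S, I_S)_0$ as in \cite{MPT}, obtaining
\[
T^{\red}|_{(I_S, 0)} = T_{P_n(S,\beta)} + \Ft - \Omega_{P_n(S,\beta)} \otimes \Ft,
\]
where we used Serre duality on the K3 surface ($\omega_S = \CO_S$) to rewrite $T_{P_n(S,\beta)} \otimes \Ft$ as $\Omega_{P_n(S,\beta)} \otimes \Ft$. Decomposing by $\BG_m$-weight, the fixed part is $T_{P_n(S,\beta)}$ and the virtual normal bundle of the fixed locus is $N^{\vir} = \Ft - \Omega_{P_n(S,\beta)} \otimes \Ft$. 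Applying the Graber-Pandharipande virtual localization formula then gives
\[
[P_n(X, \iota_*\beta)]^{\red} = \iota_*\!\left( \frac{[P_n(S, \beta)]}{e(N^{\vir})} \right) = \iota_*\!\left( \frac{e(\Omega_{P_n(S,\beta)} \otimes \Ft)}{t}\, [P_n(S, \beta)] \right).
\]

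The main technical obstacle is the careful verification of the decomposition of the trace-less complex $R\Hom_X(I^\bullet, I^\bullet)_0$ via the normal bundle sequence, and the identification of the trivial summand of $\Ext^2_S(I_S, I_S)_0$ responsible for the reduction. These are standard arguments in the K3 deformation theory and are carried out in detail in \cite{MPT}.
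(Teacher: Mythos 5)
Your overall strategy --- identify $P_n(X,\iota_*\beta)\cong P_n(S,\beta)\times\BC$ for irreducible $\beta$, localize with respect to $\BG_m$, and read off $1/e(N^{\vir})=e(\Omega_{P_n(S,\beta)}\otimes\Ft)/t$ --- is exactly the paper's second argument, and your final expression for the (reduced) virtual tangent complex is correct. However, the deformation-theoretic identifications you use to justify it are false as stated. The tangent space of $P_n(S,\beta)$ is $\Hom_S(I_S^{\bullet},F_S)$, not $\Ext^1_S(I_S^{\bullet},I_S^{\bullet})_0$: a Mukai-vector computation gives $\dim\Ext^1_S(I_S,I_S)_0=2n+\beta^2$, whereas $\dim P_n(S,\beta)=n+\beta^2+1$, so the two differ unless $n=1$ (for $\beta^2=-2$ one has $P_n(S,\beta)\cong\p^{n-1}$ of dimension $n-1$, not $2n-2$). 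Likewise $\Ext^2_S(I_S,I_S)_0=0$ rather than $\BC$: by Serre duality on the K3 surface $\Ext^2_S(I_S,I_S)\cong\Hom(I_S,I_S)^{\vee}=\BC$, and the trace map onto $H^2(S,\CO_S)$ is split surjective, so the traceless part vanishes. Consequently your displayed decomposition of $\Ext^i_X(I^{\bullet},I^{\bullet})_0$ cannot hold as written (its right-hand side already has the wrong dimension in degree one), and the one-dimensional factor removed by the reduction is not located where you claim.

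The correct bookkeeping, which is what the paper uses, phrases the fixed-locus contribution in terms of $E_S^{\vee}=R\hom_{\pi}(\CI_S,\mathbf{F})$: here $\Hom_S(I_S,F_S)=T_{P_n(S,\beta)}$ and $\Ext^1_S(I_S,F_S)\cong\CO$ is the rank-one trivial piece (dual to the symplectic form) that the reduced class discards, so that $E_S=\Omega_{P_n(S,\beta)}-\CO$ and $e(E_S\otimes\Ft)=e(\Omega_{P_n(S,\beta)}\otimes\Ft)/t$. Because in the end you substitute only the correct ranks and $\BG_m$-weights into $T^{\vir}$, your conclusion survives, but the intermediate step needs to be repaired along these lines before the argument is sound.
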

\begin{proof}
This can be seen in two different ways:
Following \cite[Sec. 1.2]{MPT}, by a Serre duality computation the obstruction bundle is
\[ \mathrm{Ob} = (\Omega_{P_n(S,\beta)} \oplus -\Ft) \otimes \Ft \]
and the reduced obstruction bundle is 
\[ \mathrm{Ob}^{\text{red}} = \Omega_{P_n(S,\beta)} \otimes \Ft. \]
We obtain:
\begin{align*}
 [ P_n(X,\iota_{\ast} \beta) ]^{\text{red}} 
& = e(\Omega_{P_n(S,\beta)} \otimes \Ft) \cdot [ P_n(X, \iota_{\ast} \beta) ] \\
& = \iota_{\ast}\left( \frac{e(\Omega_{P_n(S,\beta)} \otimes \Ft)}{t} [P_n(S,\beta)] \right)
\end{align*}

Alternatively, one can apply the virtual localization formula directly. As discussed in \cite{PT} one obtains
\begin{align*}
[ P_n(X,\iota_{\ast} \beta) ]^{\text{red}} 
& =  \iota_{\ast} \left( e(E_S \otimes \Ft) \cdot [ P_n(S,\beta) ]  \right).
\end{align*}
where
\begin{itemize}
\item $E_S = R \hom_{\pi}( \CI_S, \mathbf{F})^{\vee}$,
\item $\pi : P_n(S,\beta) \times S \to P_n(S,\beta)$ is the projection,
\item $\CI_S = [ \CO \to \mathbf{F}]$ is the universal stable pair.
\end{itemize}
Note that the rank of $E_S$ is
\[ \mathrm{rank}(E_S) = \chi(I_S,F) = \chi(\CO_X - F,F) = \chi(F) - \chi(F,F) = n + \beta^2. \]
Since $\Hom_{\pi}(I_S,F)$ is the tangent space of $P_n(S,\beta)$, which is smooth of dimension $\beta^2+n+1$,
we have that $\ext^1_{\pi}(I_S,F)$ is rank $1$. In fact, $\ext^1_{\pi}(I_S,F)$ is trivial which follows from applying $\Hom(-, F)$ to the distinguished triangle $I_S \to \CO \to F$.
Hence
\[ E_S = \Tan_{P_n(S,\beta)}^{\vee} - \CO \]
and we get the previous result:
\[
 e(E_S \otimes \Ft) = e( (\Tan^{\vee} - \CO) \otimes \Ft ) = \frac{ e( \Tan^{\vee} \otimes \Ft ) }{e( \CO \otimes \Ft )} = \frac{1}{t} e(\Tan^{\vee} \otimes \Ft) \qedhere
\]
\end{proof}

For $\gamma \in H^{\ast}(S)$, consider the descendent classes on $P_n(X,\beta)$ and $P_n(S,\beta)$:
\begin{gather*}
\ch_k^X(\gamma) = \pi_{P_{n}(X,\beta), \ast}( \ch_{k}(\mathbb{F}_X - \CO) \pi_S^{\ast}(\gamma) ) \\
\ch_k^S(\gamma) = \pi_{P_{n}(S,\beta), \ast}( \ch_{k}(\mathbb{F}_S - \CO) \pi_S^{\ast}(\gamma) ).
\end{gather*}
where $(\BF_X,s_X)$ is the universal stable pair on $P_n(X,\beta) \times X$,
and $(\BF_S,s_S)$ is the universal stable pair on $P_n(S,\beta) \times S$.

\begin{lemma}  \label{lemma:descendents}
For any $\gamma \in H^{\ast}(S)$ and $k>0$ we have that
\[
\iota^{\ast} \ch_k^X(\gamma) = 
\sum_{\ell=1}^{k-1} \ch_{\ell}^S(\gamma) \frac{(-t)^{k-1-\ell} }{(k-\ell)!}.
\]
\end{lemma}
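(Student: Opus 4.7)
The plan is to compute $\iota^* \ch_k^X(\gamma)$ directly, using the identification $P_n(X,\iota_*\beta) = P_n(S,\beta) \times \BC$ together with equivariant Grothendieck--Riemann--Roch applied to the closed immersion $j : S \hookrightarrow X$ as the zero fiber.

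First I would identify the restricted universal sheaf. Under the identification above, a $\BG_m$-fixed stable pair in the zero fiber corresponding to $(F,s) \in P_n(S,\beta)$ is $(j_* F, j_* s)$, so the pullback of the universal stable pair $\mathbb{F}_X$ along $\iota \times \id_X$ equals $J_* \mathbb{F}_S$, where
\[ J := \id_{P_n(S,\beta)} \times j \colon P_n(S,\beta) \times S \hookrightarrow P_n(S,\beta) \times X. \]
The equivariant normal bundle of $J$ is $p_S^*(N_{S/X})$, an equivariantly trivial line bundle with equivariant first Chern class $t$, so its Todd class is $t/(1-e^{-t})$. Grothendieck--Riemann--Roch for the closed immersion $J$ then gives
\[ \ch(J_* \mathbb{F}_S) = J_*\!\left( \ch(\mathbb{F}_S) \cdot \frac{1-e^{-t}}{t} \right) = J_* \sum_{m, j \geq 0} \ch_m(\mathbb{F}_S) \cdot \frac{(-t)^j}{(j+1)!}. \]
Extracting the degree-$k$ component and using that $J_*$ shifts complex degree by $1$ yields
\[ \ch_k(J_* \mathbb{F}_S) = J_* \sum_{m=0}^{k-1} \ch_m(\mathbb{F}_S) \cdot \frac{(-t)^{k-1-m}}{(k-m)!}. \]

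Next I would apply flat base change along the Cartesian square determined by $\iota$ and the projections from $P_\bullet \times X$ to $P_\bullet$. Since $\ch_k(\mathcal{O}) = 0$ for $k>0$, this yields
\[ \iota^* \ch_k^X(\gamma) = (p_P)_*\!\left( \ch_k(J_* \mathbb{F}_S) \cdot \pi_X^*(\gamma) \right), \]
where $p_P : P_n(S,\beta) \times X \to P_n(S,\beta)$ is the projection. Combining this with the formula above and the projection formula, using $J^* \pi_X^*(\gamma) = p_S^*(\gamma)$ (since $j$ is a section of $\pi_{X,S} : X \to S$, so $j^*\pi_{X,S}^* \gamma = \gamma$), gives
\[ \iota^* \ch_k^X(\gamma) = \sum_{m=0}^{k-1} \frac{(-t)^{k-1-m}}{(k-m)!} \cdot (p_P)_*\!\left( \ch_m(\mathbb{F}_S) \cdot p_S^*(\gamma) \right). \]

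Finally, the $m=0$ term vanishes because $\mathbb{F}_S$ is a family of pure $1$-dimensional sheaves on $S$, hence has codimension-$1$ support in $P_n(S,\beta) \times S$, forcing $\ch_0(\mathbb{F}_S) = \mathrm{rank}(\mathbb{F}_S) = 0$. For $m \geq 1$ one has $\ch_m(\mathbb{F}_S) = \ch_m(\mathbb{F}_S - \mathcal{O})$, so the surviving summands reassemble into $\ch_m^S(\gamma)$ and the claimed identity drops out. The only subtlety I foresee is the equivariant bookkeeping in the GRR step, verifying that $N_{S/X}$ enters with equivariant first Chern class $+t$ so that the series $(1-e^{-t})/t$ appears with the precise signs matching the stated formula; everything else is a formal manipulation via base change, projection formula, and the Chern character of a pushforward.
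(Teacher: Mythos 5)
Your argument is correct and follows essentially the same route as the paper: identify the restriction of the universal pair as the pushforward $J_*\mathbb{F}_S$ from $P_n(S,\beta)\times S$, apply (equivariant) Grothendieck--Riemann--Roch for that codimension-one immersion to produce the factor $(1-e^{-t})/t$, and then use base change and the projection formula to reassemble the sum. The only cosmetic difference is that you justify explicitly why the $m=0$ term drops out and why $\ch_k(\mathcal{O})$ can be ignored for $k>0$, which the paper leaves implicit by starting the sum at $\ell=1$.
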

\begin{proof}
Consider the diagram
\[
\begin{tikzcd}
P_n(S,\beta) \times S \ar{d}{j} & \\
P_n(S,\beta) \times X \ar{r}{\tilde{\iota}} \ar{d}{\tilde{\pi}} & P_n(X,\beta) \times X \ar{d}{\pi} \\
P_n(S,\beta) \ar{r}{\iota} & P_n(X,\beta)
\end{tikzcd}
\]
where the bottom square is fibered.
Observe that
\begin{align*}
\ch( j_{\ast} \BF) & = j_{\ast}\left(\frac{ \ch(\BF) }{\td_{\BC}} \right) \\
& = j_{\ast}\left( \ch(\BF) \frac{1-e^{-t}}{t} \right) \\
& = j_{\ast}\left( \ch(\BF) \sum_{k \geq 0} \frac{ (-t)^{k} }{(k+1)!} \right).
\end{align*}
With $k>0$ we therefore get:
\begin{align*}
\iota^{\ast} \ch_k(\gamma)
& = \iota^{\ast} \pi_{\ast}( \ch_k(\BF_X) \pi_S^{\ast}(\gamma) ) \\
& = \tilde{\pi}_{\ast}( \widetilde{\iota}^{\ast}( \ch_k(\BF_X) ) \cdot \pi_S^{\ast}(\gamma) ) \\
& = \tilde{\pi}_{\ast}( \ch_k( j_{\ast} \BF_S ) \cdot \pi_S^{\ast}(\gamma) ) \\
& = \tilde{\pi}_{\ast}\left( \left[ j_{\ast}\left(\frac{ \ch(\BF) }{\td_{\BC}} \right) \right]_{k} \pi_S^{\ast}(\gamma) \right) \\
& = (\tilde{\pi} \circ j)_{\ast}\left( \left[ \frac{ \ch(\BF) }{\td_{\BC}} \right]_{k-1} \cdot \pi_S^{\ast}(\gamma) \right) \\
& = (\tilde{\pi} \circ j)_{\ast}\left( \sum_{\ell = 1}^{k-1} \ch_{\ell}(\BF_S) \frac{(-t)^{k-1-\ell}}{ (k-\ell)!}  \pi_S^{\ast}(\gamma) \right) \\
& = \sum_{\ell = 1}^{k-1} \ch_{\ell}^S(\gamma) \frac{(-t)^{k-1-\ell}}{ (k-\ell)!} 
\end{align*}
\end{proof}

Using the above results we can compute the following basic invariant:

\begin{prop} \label{prop:leading term}
Let $\beta \in H_2(S,\BZ)$ be an effective algebraic class with $\beta^2=-2$,
and let $F \in H^2(S,\BQ)$ with $\langle \beta,F \rangle = 1$. Then
\[ 
\sum_{n} \left\langle \ch_{2+k}(F) \right\rangle^{X, \PT,\red}_{n,\beta} (-p)^n
=
-\frac{p}{(1-p)^2} \left( \frac{(-1)^k}{(k+1)!} \frac{ 1-p^{k+1}}{ (1-p)^{k+1} } \right) t^{k-1}.
\]
\end{prop}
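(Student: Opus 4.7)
My approach is a direct localization calculation. I exploit the fact that $\beta^2 = -2$ forces $\beta$ to be primitive (any decomposition $\beta = d\gamma$ requires $d^2 \mid 2$) and hence representable after deformation by a single smooth rigid rational $(-2)$-curve $C \cong \p^1 \subset S$. By deformation invariance of the reduced virtual class I may assume this is the case. Then $P_n(S,\beta) = \mathrm{Sym}^{n-1}(C) \cong \p^{n-1}$, and since $\beta$ is irreducible we get $P_n(X,\iota_\ast \beta) = P_n(S,\beta) \times \BC$.

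Combining Proposition~\ref{prop:virtual class} with Lemma~\ref{lemma:descendents} reduces the invariant to
\[
\big\langle \ch_{k+2}(F) \big\rangle_{n,\beta}^{\red} = \sum_{\ell=1}^{k+1} \frac{(-t)^{k+1-\ell}}{(k+2-\ell)!} \int_{\p^{n-1}} \frac{e(\Omega_{\p^{n-1}} \otimes \Ft)}{t}\, \ch_{\ell}^S(F).
\]
The universal sheaf is $\BF_S = \iota_\ast L$ with $L = \CO(\mathcal{D})$ on $\p^{n-1} \times C$, and the universal divisor has class $[\mathcal{D}] = h + (n-1)\alpha$ (hyperplane class on $\p^{n-1}$ times point class on $C$), as one verifies by restricting to a fiber on either side. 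Since $N_{C/S}$ is a line bundle of degree $-2$, we have $c_1(N) = -2\alpha$ with $\alpha^2=0$, so $\td(N)^{-1} = 1 + \alpha$. Grothendieck-Riemann-Roch gives
\[
\ch(L)\td(N)^{-1} = e^h(1+(n-1)\alpha)(1+\alpha) = e^h(1+n\alpha) \pmod{\alpha^2}.
\]
The key simplification is that $F \cdot C = 1$ forces $F|_C = \alpha$, hence $\iota^\ast \pi_S^\ast F = \alpha$; multiplying by $\alpha$ annihilates the $\alpha$-component, and the projection formula yields the clean identity
\[
\ch_\ell^S(F) = \frac{h^{\ell-1}}{(\ell-1)!}.
\]

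From the Euler sequence $0 \to \Omega_{\p^{n-1}} \to \CO(-1)^n \to \CO \to 0$, the multiplicativity of Euler classes gives $e(\Omega_{\p^{n-1}} \otimes \Ft) = (t-h)^n/t$. Extracting the coefficient of $h^{n-1}$ in $(t-h)^n h^{\ell-1}$ yields $\binom{n}{\ell}(-1)^{n-\ell}t^\ell$, so
\[
\big\langle \ch_{k+2}(F) \big\rangle_{n,\beta}^{\red} = (-1)^{n+k+1} t^{k-1} \sum_{\ell=1}^{k+1} \frac{n!}{\ell!(n-\ell)!(\ell-1)!(k+2-\ell)!}.
\]
Summing over $n$ via $\sum_n \binom{n}{\ell}p^n = p^\ell/(1-p)^{\ell+1}$ and rewriting the remaining $\ell$-sum using the binomial identity
\[
\sum_{m=0}^{k} \binom{k+1}{m}\Big(\tfrac{p}{1-p}\Big)^m = \frac{1-p^{k+1}}{(1-p)^{k+1}}
\]
collects everything into the desired closed form.

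The main obstacle is not any single step in isolation but the bookkeeping: the GRR computation of $\ch_\ell^S(F)$, the Euler class of $\Omega \otimes \Ft$, the alternating signs coming from Lemma~\ref{lemma:descendents}, and the binomial manipulations must all line up. The special input making this tractable is precisely the hypothesis $F \cdot C = 1$, which eliminates the $\alpha$-component in $\ch(L)\td(N)^{-1}$ and produces a GRR output depending on $\ell$ in a very simple way.
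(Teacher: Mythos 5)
Your proposal is correct and follows essentially the same route as the paper's proof: deform so that $\beta$ is irreducible and represented by a $(-2)$-curve, identify $P_n(S,\beta)\cong\p^{n-1}$, combine Proposition~\ref{prop:virtual class} with Lemma~\ref{lemma:descendents}, compute $\ch_\ell^S(F)=h^{\ell-1}/(\ell-1)!$ and $e(\Omega_{\p^{n-1}}\otimes\Ft)$ via the Euler sequence, and evaluate the resulting binomial sums. The only difference is that you spell out the Grothendieck--Riemann--Roch derivation of $\ch_\ell^S(F)$, which the paper states without computation; all your intermediate formulas agree with the paper's after re-indexing.
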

\begin{proof}
We may assume that $\beta$ is irreducible, and hence the class of a $(-2)$-curve $\p^1 \subset S$.
By Proposition~\ref{prop:virtual class} and Lemma~\ref{lemma:descendents}
we obtain:
\begin{align}
\int_{[ P_{n}(X,\beta) ]^{\red}} \tch^X_{2+k}(F)
& = \int_{ P_{n}(S,\beta) } \sum_{\ell=1}^{k+1} \ch_{\ell}^S( F ) \frac{ (-t)^{k+1-\ell}}{ (k+2-\ell)!} \frac{e( \Omega \otimes \Ft )}{t}. \label{eq1}
\end{align}
The moduli space $P_{n}(S,\beta)$ is isomorphic to $\p^{n-1}$, parametrizing $n-1$ points on the $(-2)$-curve.
The universal stable pair $\BF_S$ on $\p^{n-1} \times S$ is the pushforward of
$\pr_1^{\ast} \CO_{\p^{n-1}}(1) \otimes \pr_2^{\ast} \CO_{\p^1}(n-1)$ under the natural inclusion $\p^{n-1} \times \p^1 \hookrightarrow \p^{n-1} \times S$.
Let 
\[ \xi = c_1(\CO_{\p^{n-1}}(1)). \]
We obtain
\[ \ch^S_{\ell}(F) = \frac{\xi^{\ell-1}}{ (\ell-1)!}. \]
Note also that $c(T_{\p^{n-1}}) = (1+\xi)^n$ hence $c(\Omega_{\p^{n-1}}) = (1-\xi)^n$, and therefore
\[ \int_{\p^{n-1}} e(\Omega_{\p} \otimes \Ft) \xi^{\ell} 
= \mathrm{Coeff}_{\xi^{n-1-\ell}} \frac{(t-\xi)^n}{t} 
= (-1)^{n-1-\ell} t^{\ell} \binom{n}{\ell + 1}.
\]

Inserting into \eqref{eq1} we hence obtain
\begin{align*}
\eqref{eq1} & = 
\sum_{\ell=1}^{k+1} \int_{\p^{n-1}} \frac{(-t)^{k+1-\ell}}{(k+2-\ell)!} \frac{e(\Omega_{\p} \otimes \Ft)}{t} \frac{\xi^{\ell-1}}{(\ell-1)!} \\
& = \frac{1}{t} \sum_{\ell=0}^{k} \frac{(-1)^{k+\ell}}{ (k+1-\ell)!} \int_{\p^{n-1}} t^{k+1-\ell} e(\Omega_{\p} \otimes \Ft) \frac{\xi^{\ell}}{\ell!} \\
& = t^{k-1} \sum_{\ell=0}^{k} \frac{(-1)^{n-1+k}}{ (k+1-\ell)!} \frac{1}{\ell!} \binom{n}{\ell+1}.
\end{align*}

Observe that for all $k \geq 0$ we have $(1+x)^k = \sum_{i \geq 0} \binom{k}{i} x^i$.
Using $\binom{-k}{i} = (-1)^i \binom{k+i-1}{i}$ we get in particular that for $r \geq 1$ we have
\[ \frac{x^{r-1}}{(1-x)^r} = \sum_{n \geq r-1} \binom{n}{r-1} x^n. \]
Therefore
\begin{align*}
\sum_{n} (-p)^n \int_{[ P_{n}(X,\beta) ]^{\red}} \tch_{2+k}(F)
& = t^{k-1} \sum_{\ell = 0}^{k} \frac{(-1)^{k-1}}{ (k+1-\ell)! \ell!} \left( \sum_{n \geq 0} \binom{n}{\ell + 1} p^n \right) \\
& = t^{k-1}  (-1)^{k-1} \sum_{\ell = 0}^{k} \frac{1}{ (k+1-\ell)! \ell!} \frac{p^{\ell+1}}{(1-p)^{\ell+2}} \\
& = - t^{k-1}  \frac{p}{(1-p)^2} \left( \frac{(-1)^k}{(k+1)!} \frac{ 1-p^{k+1}}{ (1-p)^{k+1} } \right).
\end{align*}
\end{proof}

\subsection{Descendents of fiber class}
Let $S \to \p^1$ be again an elliptic K3 surface with fiber class $F$.
The first evaluation we consider is the series
\begin{align*}
A_k(p,q) & := \frac{\left\langle \ch_{2+k}(F) \right\rangle^{\PT}}{ \langle 1 \rangle^{\PT}} \\
& = -\Theta^2 \Delta \left\langle \ch_{2+k}(F) \right\rangle^{\PT}.
\end{align*}
The holomorphic anomaly equation of Conjecture~\ref{conj:HAE K3xC} in this case reads as follows\footnote{Observe that we follow the convention of Remark~\ref{rmk:convention}.}:
\begin{lemma}
Assuming Conjecture~\ref{conj:HAE K3xC}, $A_k$ is a quasi-Jacobi form of index 0 and weight $k$ satisfying
\begin{gather*}
\frac{d}{dA} A_k = A_{k-1} \\
\frac{d}{dG_2} A_k = - \sum_{m_1 + m_2 = k-2} \frac{m_1! m_2!}{(k-1)!} A_{m_1} A_{m_2}
\end{gather*}
\end{lemma}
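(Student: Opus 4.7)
The plan is to apply Conjecture~\ref{conj:HAE K3xC} directly to the one-insertion bracket $\langle \tch_{k+2}(F) \rangle^{\PT}$. The quasi-Jacobi form claim follows from part (a) of that conjecture: the bracket has index $-1$ and weight $-10 + (k+2 - 1 + \wt(F)) = k-10$ (using $\wt(F) = -1$), and multiplying by the Jacobi form $-\Theta^2 \Delta$ of weight $10$ and index $1$ cancels the $\Theta^{-2}$ pole and yields a quasi-Jacobi form of weight $k$ and index $0$.

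For $\frac{d}{dA}A_k$, the $\pi^*\pi_*(F)$ term vanishes since $F$ is a fiber class and $\pi_* F = 0$. For the remaining term I would compute the K\"unneth decomposition $\pi^*\Delta_{\mathsf{B}} = t(F_1 + F_2)$, obtained from $\Delta_{\p^1} = H_1 + H_2$ together with the equivariant diagonal $\Delta_{\BC} = t$. The piece proportional to $F \cdot tF = 0$ drops out, leaving $t\langle \tch_{k+1}(F) \tch_2(F) \rangle^{\PT}$; the divisor equation (Lemma~\ref{lemma:divisor_string_equation}) with $F \cdot (B+dF) = 1$ then reduces this to $t\langle \tch_{k+1}(F) \rangle^{\PT}$. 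Setting $t=1$ per Remark~\ref{rmk:convention} and multiplying by $-\Theta^2 \Delta$ yields $A_{k-1}$, as required.

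For $\frac{d}{dG_2}A_k$ most contributions to the HAE vanish: the pair sums $\sum_{i<j}$ are empty since $n=1$; the $c_2(\mathsf{B})$-term vanishes since $F \cdot c_2(\mathsf{B}) = F\cdot 2tF = 0$; and the $\sigma$-correction vanishes because $T_{\alpha} \ch_k(F) = \ch_k((F\cdot F)\alpha - (\alpha\cdot F)F) = 0$ for all $\alpha \in \{F,B\}^\perp$. The only surviving contribution is the $\CE(K_X F)$ term. Using the $H^*(B)$-linearity of $\CE$ and $\CE(1) = \pi^*\Delta_B$, one finds $\CE(F) = F_1 \cdot \pi^*\Delta_B = tF_1 F_2$ (since $F_1^2 = 0$), hence $\CE(K_X F) = -t^2 F_1 F_2$; the sign factor $(-1)^{(1+\wt^L)(1+\wt^R)}$ equals $+1$ since $\wt(F)=-1$. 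Substituting and reindexing $m_i = m_i'+2$ produces
\[ \frac{d}{dG_2}\langle \tch_{k+2}(F) \rangle^{\PT} = -t^2 \sum_{m_1'+m_2'=k-2} \frac{m_1'!\, m_2'!}{(k-1)!}\, \langle \tch_{m_1'+2}(F) \tch_{m_2'+2}(F) \rangle^{\PT}. \]
To conclude one needs the factorization $\langle 1 \rangle^{\PT} \langle \tch_{a+2}(F) \tch_{b+2}(F) \rangle^{\PT} = \langle \tch_{a+2}(F) \rangle^{\PT} \langle \tch_{b+2}(F) \rangle^{\PT}$, after which multiplying by $-\Theta^2 \Delta$ and setting $t=1$ produces the claimed formula. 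The main obstacle is establishing this factorization. The plan is to prove it inductively by applying Conjecture~\ref{conj:HAE K3xC} to multi-insertion brackets and showing that the normalized two-point series $C_{a,b} := -\Theta^2\Delta \langle \tch_{a+2}(F) \tch_{b+2}(F) \rangle^{\PT}$ and the product $A_a A_b$ satisfy the same $A$- and $G_2$-recursions with matching boundary values $C_{0,0}=1$ and $C_{-1,b}=C_{a,-1}=0$; a degeneration argument in which distinct fiber insertions separate onto distinct fibers of the elliptic K3 provides an alternative route.
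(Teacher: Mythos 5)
Your treatment of the quasi-Jacobi form, weight and index claims and of the $\frac{d}{dA}$ equation coincides with the paper's argument: the paper likewise computes the equivariant diagonal $\Delta_{\mathsf{B}}=t(F_1+F_2)$, discards the $F_1^2$ piece, uses $\pi_{\ast}F=0$ and $\tch_k(F)=\ch_k(F)$, and reduces the resulting $\tch_2(F)$ insertion by the divisor equation. Your identification of the sole surviving $G_2$-term via $\CE(K_XF)=-F_1F_2$, and the vanishing of the $c_2(\mathsf{B})$-, pair-sum and $\sigma$-corrections, also matches the paper's (very terse) proof.

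The substantive point is the factorization $\langle 1\rangle\cdot\langle\tch_{a+2}(F)\tch_{b+2}(F)\rangle=\langle\tch_{a+2}(F)\rangle\cdot\langle\tch_{b+2}(F)\rangle$, which you correctly recognize is needed and is \emph{not} contained in Conjecture~\ref{conj:HAE K3xC}: the $\CE$-term of the anomaly equation produces the connected two-point bracket, not a product of one-point brackets. The paper's own proof is silent on this; in effect the lemma implicitly also uses the fiber-class case of the dependence conjecture of Section~\ref{subsec:general formula}, which asserts precisely that the stationary partition function exponentiates with connected two-point contribution weighted by $\gamma_i\cdot\gamma_j=F\cdot F=0$. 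Your proposed inductive closure, however, does not work as stated, for two reasons. First, the $G_2$-recursion for the normalized two-point series involves three-point brackets (each insertion contributes its own $\CE(K_X F)$ splitting), so the induction must run over all $n$-point functions of fiber descendents simultaneously. Second, and more seriously, two index-$0$ quasi-Jacobi forms satisfying the same $\frac{d}{dA}$- and $\frac{d}{dG_2}$-recursions differ by an honest modular form of the relevant weight, so matching recursions together with the boundary values in the indices does not force equality; the ambiguity must be killed at every weight, and the leading $q$-coefficients supplied by the localization of Section~\ref{subsec:evaluation by localization} only do so in weights without cusp forms. A degeneration argument, or an explicit appeal to the Section~\ref{subsec:general formula} conjecture as an additional hypothesis, is genuinely required to close this step; as written, neither your argument nor the paper's supplies it.
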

\begin{proof}
The class of the diagonal in $H_{\BG_m}(\BC \times \BC)$
with respect to the diagonal $\BG_m$-action is\footnote{Similarly,
the equivariant diagonal of $\p^1$ is
$[ \Delta_{\p^1} ] = \frac{1}{t} [(0,0)] + \frac{1}{-t} [(\infty, \infty)]$.}
\[ \Delta_{\BC} = \frac{1}{t} [ (0,0) ]. \]
We then find
\[ \pi_1^{\ast}([0] F) \cdot \Delta_{B} = \pi_1^{\ast}([0] \cdot [ \Delta_{\p^1} ] \cdot F_1 (F_1+F_2) = F_1 F_2 [(0,0)]. \]
Note also $\ch_k(F) = \widetilde{\ch}_k(F)$.
Hence we apply the $\frac{d}{dA}$-holomorphic anomaly equation, with output given by the above insertion. This yields the first claim.

For the second claim, one uses
\[ \CE(K_X F) = F K_X \CE(1) = K_X F_1 F_2 = -F_1 F_2 \quad (\text{with } t=1). \qedhere \]
\end{proof}

By Proposition~\ref{prop:leading term} we also have for $k \geq 0$ the leading term:
\[ A_k = \frac{(-1)^k}{(k+1)!} \frac{1-p^{k+1}}{(1-p)^{k+1}} + O(q). \]

\begin{example}
Assume Conjecture~\ref{conj:HAE K3xC}. Then the above constraints, together with the GW/PT correspondence and basic computations on the GW side along the lines of \cite{MPT, K3xP1} yield the following evaluations:
\begin{align*}
A_0 & = 1 \\
A_1 & = A \\
A_2 & = - G_2 + A^2/2 \\
A_3 & = - G_2 A + A^3/6 \\
A_4 & = \frac{1}{24} A^{4} - \frac{1}{2} A^{2} G_{2} + \frac{1}{3} G_{2}^{2} - \frac{1}{72} G_{4} \\
A_5 & = \frac{1}{120} A^{5} - \frac{1}{6} A^{3} G_{2} + \frac{1}{3} A G_{2}^{2} - \frac{1}{72} A G_{4}.
\end{align*}
\end{example}

\subsection{Descendents of point}
For $\pt \in H^4(S)$ the point class we consider the series
\[ B_k(p,q) := \frac{\left\langle \ch_{k}(\pt) \right\rangle^{\PT}}{\langle 1 \rangle^{\PT}}. \]

Assuming the holomorphic anomaly equation, then we see that $B_k$ is a quasi-Jacobi form of index $0$ and weight $k$ satisfying
\begin{gather*}
\frac{d}{dA} B_k = B_{k-1} + A_{k-1} \\
\frac{d}{dG_2} B_k = \frac{-2}{(k-1)!} \sum_{m_1 + m_2 = k} (m_1-2)! m_2! A_{m_1-2} B_{m_2}.
\end{gather*}
The constant term is easily fixed to be:
\[ B_k = 0 + O(q), \quad k>0. \]
Assuming Conjecture~\ref{conj:HAE K3xC} and using Gromov-Witten computations
one finds:
\begin{align*}
B_0 & = \left( \int_{S} -\pt \right) \Theta^2 \Delta(q) \left\langle 1 \right\rangle^{S \times \BC}_{\PT} = -1. \\
B_1 & = 0 \\
B_2 & = \frac{1}{2} A^2 - \frac{1}{2} \wp + 2 G_2 \\
B_3 & = \frac{1}{3} A^{3} + A G_{2} - \frac{1}{2} A \wp - \frac{1}{12} \wp' \\
B_4 & = \frac{1}{8} A^{4} - \frac{1}{4} A^{2} \wp -  G_{2}^{2} + \frac{1}{3} G_{2} \wp - \frac{1}{24} \wp^2 - \frac{1}{12} A \wp' + \frac{5}{36} G_{4}
\end{align*}

\subsection{$C$-series}
Consider classes $\alpha_1, \alpha_2 \in H^2(S,\BQ)$ such that
\begin{gather*}
\alpha_i \cdot F = \alpha_i \cdot B = 0 \text{ for } i=1,2  \\
\alpha_1 \cdot \alpha_2 = 1, \quad \alpha_1^2 = 0, \quad \alpha_2^2 = 0.
\end{gather*}
Consider the series
\[ C_{k\ell} := \frac{\left\langle \tch_{2+k}(\alpha_1) \tch_{2+\ell}(\alpha_2) \right\rangle^{\PT}_{1}}{\left\langle 1 \right\rangle^{\PT}} \]

\begin{lemma}
Assume Conjecture~\ref{conj:HAE K3xC}. Then
$C_{k \ell}$ is a quasi-Jacobi form of index 0 and weight
$k+\ell+2$ satisfying
\begin{gather*}
\frac{d}{dA} C_{k \ell} = C_{k-1,\ell} + C_{k, \ell-1} 
\end{gather*}
and
\begin{align*}
\frac{d}{dG_2} C_{k \ell} = & -2 \sum_{\substack{m_1 + m_2 = k}} \frac{ (m_1 - 1)! m_2!}{k!} A_{m_1-1} C_{m_2-1,\ell}  \\
& -2 \sum_{\substack{m_1 + m_2 = \ell}} \frac{ (m_1 - 1)! m_2!}{\ell!} A_{m_1-1} C_{k, m_2-1}   \\
& + 2 \binom{k+\ell}{k,\ \ell} A_{k + \ell}
-2 A_{k} A_{\ell}
\end{align*}
\end{lemma}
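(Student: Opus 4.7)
The plan is to apply Conjecture~\ref{conj:HAE K3xC} directly to $\langle\tch_{k+2}(\alpha_1)\tch_{\ell+2}(\alpha_2)\rangle^{\PT}$, following the template already used for $A_k$ and $B_k$ above. The weight and index assertions follow from the analogue of Proposition~\ref{prop:weight} for the reduced theory: since $\alpha_i\perp\{\pt,1,W,F\}$ we have $\wt(\alpha_i)=0$, so the numerator has weight $-10+(k+1)+(\ell+1)$ and index $-1$, and normalizing by $\langle 1\rangle^{\PT}$ gives weight $k+\ell+2$ and index $0$. For the $A$-derivative equation I would apply the $A$-equation of Conjecture~\ref{conj:HAE K3xC}: the $\pi^{*}\pi_{*}$ terms vanish because $\alpha_i\cdot F=0$ forces $\pi_{*}(\alpha_i)=0$, and in the $\alpha_i\Delta_{\mathsf{B},1}$ terms only one K\"unneth component survives---decomposing $\Delta_{\mathsf{B}}=\tfrac{1}{t}\bigl([0]_{\BC}\otimes F[0]_{\BC}+F[0]_{\BC}\otimes[0]_{\BC}\bigr)$, the piece with $F$ adjacent to $\alpha_i$ is killed. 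The surviving insertion $\tch_{k_i-1}(\alpha_i[0]_{\BC})\tch_2(F[0]_{\BC})$ reduces by restriction to $S\times\{0\}$ (extending Lemma~\ref{lemma:descendents} to classes with a $[0]_{\BC}$-factor: $\tch_n(\gamma[0]_{\BC})=t\,\tch_n(\gamma)$) and the divisor equation $Z(\tch_2(F)\cdots)=(F\cdot\beta)Z(\cdots)=Z(\cdots)$, yielding $C_{k-1,\ell}$ and $C_{k,\ell-1}$.

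For the $G_2$-equation I would proceed term by term. The first (diagonal) term vanishes because for both $\alpha_1\Delta_{\mathsf{B},1}$ and $\alpha_2\Delta_{\mathsf{B},2}$ to be nonzero the non-$F$ K\"unneth factor of $\Delta_{\mathsf{B}}$ would need to sit on both sides simultaneously. The second ($c_2(\mathsf{B})=2tF$) term vanishes since $\alpha_i\cdot F=0$. The third term uses $\CE(K_X\alpha_i)=-t\,\CE(\alpha_i)=-\bigl(\alpha_i[0]_{\BC}\otimes F[0]_{\BC}+F[0]_{\BC}\otimes\alpha_i[0]_{\BC}\bigr)$, obtained from the general identity $\CE(\alpha)=(\alpha_L+\alpha_R)\Delta_{\mathsf{B}}$ on $\wt=0$ classes; the prefactor $\tfrac{(m_1-1+\wt^L)!(m_2-1+\wt^R)!}{k!}$ with $\wt\in\{0,-1\}$ then reproduces, after the $[0]_{\BC}$-reduction, the two $-2\sum\tfrac{(m_1-1)!m_2!}{k!}A_{m_1-1}C_{m_2-1,*}$ sums coming from $i=1$ and $i=2$. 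The fourth term uses $\CE(\alpha_1\boxtimes\alpha_2)=\pi^{*}\pi_{*}(\alpha_1\alpha_2)=F$ (the five other contributions in the general $\CE$-product formula vanish thanks to $\pi_{*}(\alpha_i)=\pi_{*}(W\alpha_i)=0$), and the sign $(-1)^{(1+\wt)(1+\wt)}=-1$ combines with the prefactor $-2$ to give $+2\binom{k+\ell}{k,\ell}A_{k+\ell}$. The fifth (reduced) correction is evaluated via $T_{e_a}\ch_m(\alpha_i)=-(e_a\cdot\alpha_i)\ch_m(F)$ together with $T_{e_a}\ch_m(F)=0$: the Leibniz expansion collapses to the cross-terms, and $\sum_{a,b}(g^{-1})_{ab}(e_a\cdot\alpha_1)(e_b\cdot\alpha_2)=\alpha_1\cdot\alpha_2=1$ yields $-2\,Z(\tch_{k+2}(F)\tch_{\ell+2}(F))=-2\,A_kA_\ell$.

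The main obstacle, which is used implicitly throughout and is also invoked without proof in the preceding $A_k$ and $B_k$ statements, is the factorization $Z(\tch_m(F)\cdot G)=A_{m-2}\cdot Z(G)$ whenever $G$ involves only classes orthogonal to $F$. This identity is what allows the right-hand sides of Conjecture~\ref{conj:HAE K3xC}, which are genuine three- or four-point brackets, to be rewritten as products of $A$'s and $C$'s; it is not a formal consequence of the holomorphic anomaly equation and would require a separate justification, either by a direct analysis of how fiber descendents act on the reduced moduli space $P_n(S,\beta)$ or by transferring the analogous factorization on the Gromov--Witten side through the GW/PT correspondence of \cite{Marked}.
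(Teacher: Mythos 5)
Your derivation is correct, and it is exactly the computation the paper intends: the lemma is stated without proof, but every step you carry out can be cross-checked against the sketch given for the $A_k$-lemma and against the auxiliary formulas listed under Conjecture~\ref{conj:HAE K3xC}. In particular, your evaluation of the reduced correction term reproduces the paper's stated identity $\sigma(\ch_k(\alpha)\ch_{\ell}(\alpha'))=2(\alpha\cdot\alpha')\ch_k(F)\ch_{\ell}(F)$, your bookkeeping of $\wt(\alpha_i)=0$, $\wt(F)=-1$ in the signs and factorials of the $\CE(K_X\gamma_i)$-term is right (the reindexing $m_i\mapsto m_i-1$ lands precisely on the stated coefficients $\tfrac{(m_1-1)!\,m_2!}{k!}$), and the vanishing of the diagonal and $c_2(\mathsf{B})$ terms for the reasons you give is correct.

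Your closing observation is also well taken, and it is the one point where the lemma as stated uses more than Conjecture~\ref{conj:HAE K3xC}: rewriting the multi-point brackets $\langle \tch_{m_1}(F)\tch_{m_2}(\alpha_1)\tch_{\ell+2}(\alpha_2)\rangle^{\PT}$ and $\langle \tch_{k+2}(F)\tch_{\ell+2}(F)\rangle^{\PT}$ as $A_{m_1-2}\,C_{m_2-2,\ell}\,\langle 1\rangle^{\PT}$ and $A_k A_{\ell}\,\langle 1\rangle^{\PT}$ is an instance of the multiplicative dependence conjectured in Section~\ref{subsec:general formula}, and is not a formal consequence of the holomorphic anomaly equation (except for $m=2$, where it reduces to the divisor equation because $A_0=1$). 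The same substitution is made silently in the $A_k$- and $B_k$-lemmas. So either the hypotheses should be read as including that dependence conjecture, or the right-hand sides should be left as normalized multi-point brackets; your proof is complete once this is made explicit.
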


\begin{example}
Conjecture~\ref{conj:HAE K3xC} with the above constraints then yields:
\begin{align*}
C_{k\ell} & = 0 + O(q) \\
C_{k0} & = C_{0 \ell} = 0 \\
C_{11} & = D_{\tau}(G_2) \\
C_{21} & = D_{\tau}(G_2) A \\
C_{31} & = D_{\tau}(G_2) \frac{A^2}{2} + \frac{4}{3} G_2^3 - \frac{2}{3} G_2 G_4 + \frac{7}{720} G_6 \\
C_{41} & = D_{\tau}(G_2) \frac{A^3}{6} + \left( \frac{4}{3} G_2^3 - \frac{2}{3} G_2 G_4 + \frac{7}{720} G_6 \right) A \\
C_{22} & = D_{\tau}(G_2) A^2 + \frac{2}{3} G_2^3 - \frac{1}{6} G_2 G_4 - \frac{7}{720} G_6 \\
C_{32} & = \frac{1}{2} D_{\tau}(G_2) A^3 + ( 2 G_2^3 - \frac{5}{6} G_2 G_4)A 
\end{align*}
\end{example}

\subsection{A general formula for the stationary theory}
\label{subsec:general formula}
We now turn to what can be said about the general stationary theory of $S \times \BC$.
%
%
Define the partition function:
\[ Z_{S \times \BC}(\gamma_0, \gamma_1, \ldots ) := \left\langle \exp\left( \sum_{k \geq 0} \tch_{2+k}(\gamma_k) \right) \right\rangle^{\PT} \]
where $\gamma_0, \gamma_1, \gamma_2, \ldots \in H^{\ast}(S)$ are formal classes.
$Z_{S \times \BC}(\gamma_0, \gamma_1, \ldots )$ encodes all Pandharipande-Thomas invariants of $S \times \BC$.
%
We make the following basic dependence conjecture:
\begin{conj}
Assume that $\deg(\gamma_i) > 0$ for all $i$. Then we have
\[ Z_{S \times \BC}(\gamma_0, \gamma_1, \ldots ) = 
\exp\left( 
\sum_{k \geq 0} (\gamma_k, W + D_{\tau} F) A_k 
+ \sum_{k \geq 0} (\gamma_k, 1) B_k
+ \frac{1}{2} \sum_{i,j \geq 0} (\gamma_i \cdot \gamma_j) C_{ij} \right) \frac{-1}{\Theta^2 \Delta}.
\]
where $D_{\tau}$'s in the formula above stand for commuting the operators to the left and
applying them to the remaining terms. Moreover, $(\gamma_1, \gamma_2) = \int_{S} \gamma_1 \cup \gamma_2$ is the intersection pairing.
\end{conj}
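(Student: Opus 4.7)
The plan is to prove the formula by reducing every stationary correlator to the three fundamental series $A_k$, $B_k$, $C_{ij}$ via the holomorphic anomaly equation of Conjecture \ref{conj:HAE K3xC} combined with the divisor and string equations, and then verifying that the conjectured exponential right-hand side satisfies the identical system of recursions with the same initial data.

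First, I would exploit the cohomology decomposition $H^{>0}(S,\BQ) = \BQ\cdot F \oplus \BQ\cdot W \oplus (\{F,W\}^{\perp}\cap H^2) \oplus \BQ\cdot \pt$ together with multilinearity in each $\gamma_i$, so that it suffices to prove the formula when each $\gamma_i$ is one of four types: $F$, $W$, a transverse $H^2$-class, or $\pt$. The hypothesis $\deg(\gamma_i)>0$ eliminates the $1$-class (which would be handled by the string equation and contributes the $D_p$-derivative). The four terms on the right-hand side are then designed to match these four types: one-point contributions from $F$ and $W$ give the $(\gamma_k, W+D_\tau F)A_k$ factor (the $D_\tau$ arising from the divisor equation Lemma \ref{lemma: div W}, which turns an insertion $\tch_2(W)$ into a $D_\tau$ action), one-point contributions from $\pt$ give the $(\gamma_k,1)B_k$ factor, and transverse classes can only enter pairwise, through $C_{ij}$.

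Next I would verify that the conjectured right-hand side $Z=\exp(L+Q)\cdot Z_0$, with $Z_0 = -\Theta^{-2}\Delta^{-1}$, satisfies the same $\frac{d}{dG_2}$- and $\frac{d}{dA}$-holomorphic anomaly equations as the left-hand side. The derivations distribute as
\[
\tfrac{d}{dG_2}Z = \bigl(\tfrac{d}{dG_2}(L+Q)\bigr)\exp(L+Q)\, Z_0 + \exp(L+Q)\,\tfrac{d}{dG_2}Z_0,
\]
and the individual HAEs for $A_k$, $B_k$, $C_{ij}$ established in Sections~\ref{subsec:evaluation by localization}--\ref{subsec:general formula} produce sums of products which should reassemble into the right-hand side of Conjecture \ref{conj:HAE K3xC}. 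The combinatorial matching is driven by the action of the correspondence $\CE$ on $H^*(S)\boxtimes H^*(S)$: one needs that $\CE(\gamma_i\boxtimes\gamma_j)$ decomposed into the four cohomology components produces exactly the cross-terms that arise when applying the Leibniz rule to $\exp(L+Q)$. This part is essentially a bookkeeping check once the individual HAEs are in hand, though it involves keeping careful track of the sign and factorial conventions in Conjecture \ref{conj:HAE K3xC}.

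The main obstacle is the initial-data problem needed to close the uniqueness argument: one must independently verify that the $q^0$-coefficients of both sides agree. By Proposition \ref{prop:virtual class} the $q^0$-coefficients reduce to equivariant integrals over $\p^{n-1}=P_n(S,B)$ against descendent classes whose $S$-structure is explicit. The content of the conjecture at the level of initial data is a \emph{Wick-type factorization} of these projective-space integrals into products of one-point and two-point integrals. This is a genuine combinatorial statement which I would attempt to establish either directly via Schur-function identities for the total Chern class of $\Omega_{\p^{n-1}}$, or indirectly by routing through the GW/PT correspondence (known for $K3\times \BC$ in the primitive class by \cite{Marked}) and appealing to the well-developed free-field/exponential structure of stationary Gromov--Witten theory of K3 surfaces. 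A secondary caveat is that the whole strategy is conditional on Conjecture \ref{conj:HAE K3xC}, so any rigorous proof along these lines will be conditional until the underlying HAEs are themselves established.
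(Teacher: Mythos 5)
You should first be aware that the statement you are proving is stated in the paper as a \emph{conjecture}: the paper offers no proof, only the remark that the analogous dependence was conjectured for the Gromov--Witten theory of K3 surfaces in \cite{GWK3} and that this motivated the formula. So there is no argument in the paper to compare against, and your proposal must be judged on its own terms.

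On those terms there is a genuine gap in the closing step. The holomorphic anomaly equations of Conjecture~\ref{conj:HAE K3xC} determine a series $\varphi/(\Theta^2\Delta)$ with $\varphi\in\QJac_{\ast,0}$ only up to an element of $\ker(\tfrac{d}{dG_2})\cap\ker(\tfrac{d}{d\A})=\Jac_{\ast,0}=\Mod_{\ast}$, i.e.\ up to a modular form of the relevant weight. Your initial data --- the $q^0$-coefficient supplied by Proposition~\ref{prop:leading term} --- imposes only the value of the constant term of that modular form, since the ambiguity is independent of $p$. For weight $k\le 10$ this suffices because $\dim\Mod_k\le 1$ and $E_k$ has nonzero constant term, but from weight $12$ onward a multiple of $\Delta(q)$ is invisible to the $q^0$-coefficient, so the uniqueness argument ``same HAE $+$ same leading term $\Rightarrow$ equal'' breaks down exactly where it is needed for the general statement. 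The paper itself is forced to invoke the GW/PT correspondence and Gromov--Witten computations to pin down even the low-order series $A_k,B_k,C_{k\ell}$; your strategy would need a supply of such independent inputs in every weight, at which point it is no longer a closed recursion. A second, related gap: the assertion that the HAE recursion ``reassembles'' into the exponential of one- and two-point contributions is not a bookkeeping check --- at each inductive stage the undetermined Jacobi-form ambiguity of the $n$-point function could destroy the Wick factorization, so the closure of the recursion on the exponential ansatz is essentially the content of the conjecture rather than a consequence of the HAE. Your proposal is a sensible way to generate evidence (and mirrors how the authors arrived at the formula), but it does not constitute a proof even conditionally on Conjecture~\ref{conj:HAE K3xC}.
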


A similar dependence has been conjectured for the Gromov-Witten theory of the K3 surface
in \cite{GWK3} and motivated the above conjecture.

\subsection{Conjectural evaluation of ABC} \label{subsec:ABC}
Finally, we conjecture a general expression for the series $A_k, B_k, C_k$.
For that recall the series
\[ \A(p,q) = D_p \log \Theta(p,q)
= -\frac{1}{2} -\frac{p}{1-p} - \sum_{n \geq 1} \sum_{d|n} (p^d - p^{-d}) q^n.
 \]
The series can naturally be expanded in $p=e^{z}$.
By some abuse of notation, we will write
\[
\A(z) := \A(p,q)|_{p=e^z}
\]
for this series.
Concretely, one has
\[
\A(z) = \frac{1}{z} - 2 \sum_{k \geq 1} G_{k}(q) \frac{z^{k-1}}{(k-1)!}.
\]
Below we will implicitly use the variable change $p=e^z$.

Our computations above lead to the following conjecture:

\begin{conj} For all $k \geq 0$, under the variable change $p=e^z$ we have:
\begin{align*}
A_k(p,q) & = \frac{1}{(k+1)!} \mathrm{Res}_{x=0} (A(z) + A(x))^{k+1} \\
B_k(p,q) & = \mathrm{Res}_{x=0} \left( \frac{( A(z) + A(x) )^k}{k!} ( A(x+z) - A(x) ) \right) \\
C_{k\ell}(p,q) & = \Res_{x_1=0} \Res_{x_2=0}\left( \frac{(A(x_1) + A(z))^{k+1}}{(k+1)!} \frac{(A(x_2) + A(z))^{\ell+1}}{(\ell+1)!} A'(x_1 - x_2) \right)
\end{align*}
where $\mathrm{Res}_{x=0}$ stands for taking the $x^{-1}$ coefficient,
and $A'(z) = \frac{d}{dz} A(z)$.
\end{conj}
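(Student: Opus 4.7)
The strategy is to verify that the proposed right-hand sides $\hat{A}_k$, $\hat{B}_k$, $\hat{C}_{k\ell}$ satisfy the same three properties used to characterize the left-hand sides: they are quasi-Jacobi forms of the correct weight and index, they satisfy the holomorphic anomaly equations of Conjecture~\ref{conj:HAE K3xC}, and they match the $q \to 0$ specialization computed by Proposition~\ref{prop:leading term} (and its analogues for point and $\alpha_i \alpha_j$ insertions). Since Jacobi forms of fixed weight and index span a finite-dimensional space, these constraints determine the series uniquely. The weight/index claim is immediate from the fact that $A(z) = \A(e^z,q)$ has weight $1$ and index $0$, combined with the weight-preserving behavior of residues.

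The HAE verification proceeds by differentiating under the residue sign, with the following bookkeeping: viewed as a quasi-Jacobi form, $A(z)$ equals the generator $\A$, so $\tfrac{d}{d\A}A(z)=1$ and $\tfrac{d}{dG_2}A(z)=0$; the Laurent series $A(x) = x^{-1}-2G_2 x-\tfrac{1}{3}G_4 x^3-\cdots$ has coefficients independent of $\A$, giving $\tfrac{d}{d\A}A(x)=0$ and $\tfrac{d}{dG_2}A(x)=-2x$; and the Taylor expansion $A(x+z) = \A + x(-\wp - 2G_2) - \tfrac{x^2}{2}\wp' + O(x^3)$ yields $\tfrac{d}{d\A}A(x+z)=1$ and $\tfrac{d}{dG_2}A(x+z)=-2x$. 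With these inputs the $d/d\A$ equations collapse at once, for instance
\[
\tfrac{d}{d\A}\hat A_k = \tfrac{1}{k!}\Res_x(A(z)+A(x))^k = \hat A_{k-1},
\]
and similar one-line calculations handle $\hat B_k$ and $\hat C_{k\ell}$. The $d/dG_2$ equations reduce, via the binomial theorem inside the residue, to combinatorial identities such as
\[
\Res_x x\,(A(z)+A(x))^k = \tfrac{k}{2}\sum_{m_1+m_2=k-2} m_1!\,m_2!\,\hat A_{m_1}\hat A_{m_2},
\]
which are proved by extracting the $x^{-2}$-coefficient of $A(x)^j$ using the structure $A(x) = x^{-1} + (\text{odd positive powers of }x)$, and analogously for the $\hat B_k$ and $\hat C_{k\ell}$ cases.

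For the initial conditions at $q=0$ one uses $A(z)|_{q=0} = \tfrac{1}{2}\coth(z/2)$, so the residues reduce to hyperbolic identities like
\[
\Res_x(\coth(z/2)+\coth(x/2))^{k+1} = \frac{2\sinh((k+1)z/2)}{\sinh^{k+1}(z/2)},
\]
verifiable by partial fractions on $\BC/2\pi i\BZ$. This matches Proposition~\ref{prop:leading term}; the analogous hyperbolic identities, combined with an extension of the localization argument of Proposition~\ref{prop:virtual class} to point descendents and to descendents of $\alpha_1,\alpha_2$, handle $\hat B_k$ and $\hat C_{k\ell}$.

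The main obstacle is that Conjecture~\ref{conj:HAE K3xC} is itself only conjectural, so this argument is conditional on it. An unconditional proof would likely proceed via the reduction to Hilbert-scheme integrals of Proposition~\ref{prop:virtual class}, followed by a direct computation of $\ch_k$-insertions using the Nakajima/Lehn vertex operator formalism on $H^*(S^{[n]})$; within that framework, generating functions of the shape $\Res_x e^{u(A(z)+A(x))}$ arise naturally from the free-field description of the Hilbert scheme, which is the structural reason one expects such a clean closed form. A secondary subtlety is the double residue defining $\hat C_{k\ell}$: because $A'(x_1-x_2)$ has a pole on $x_1 = x_2$, the two residues do not commute naively, so one must fix an iterated Laurent expansion (say $|x_1|<|x_2|$) and verify that the ambiguity is absorbed by the identities being proven.
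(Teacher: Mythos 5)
This statement is presented in the paper as a \emph{conjecture}: the authors arrive at the closed formulas by fitting them to the finitely many evaluations of $A_k$, $B_k$, $C_{k\ell}$ computed earlier (which themselves rely on Conjecture~\ref{conj:HAE K3xC} plus GW/PT input), and they offer no proof. So there is no argument in the paper to compare yours against, and your proposal should be judged as a free-standing strategy. As such it is conditional twice over --- on Conjecture~\ref{conj:HAE K3xC} and on the GW/PT correspondence needed for the initial data --- which you acknowledge; but even granting those inputs, the central uniqueness step has a genuine gap.

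Concretely, you claim that the three constraints (weight/index, the two anomaly equations, and the $q\to 0$ limit from Proposition~\ref{prop:leading term}) determine the series uniquely ``since Jacobi forms of fixed weight and index span a finite-dimensional space.'' Finite-dimensionality is not the issue; injectivity of the constraints is. The anomaly equations determine $A_k\in\QJac_{k,0}$ only up to an element of the joint kernel of $\frac{d}{dG_2}$ and $\frac{d}{d\A}$, i.e.\ up to an index-$0$, weight-$k$ Jacobi form, which here means a weight-$k$ polynomial in $\wp,\wp',G_4,G_6$. The $q^0$-coefficient does not separate these: at $q=0$ the forms $G_4$ and $G_6$ degenerate to constants, so already in weight $12$ the combination proportional to $\Delta(q)$ (and, more generally, any cusp form times a monomial in $\wp,\wp'$) lies in the kernel of all your constraints. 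Hence for $k\geq 12$ (and likewise for $B_k$ and $C_{k\ell}$ of total weight $\geq 12$) your characterization does not pin down the answer, and the argument cannot close without further boundary conditions --- which is precisely why the paper itself had to import extra Gromov--Witten computations and still only states the formula as a conjecture. Secondary, but also unresolved: the combinatorial residue identities you assert for the $\frac{d}{dG_2}$-compatibility are stated without verification (note that $\Res_{x}\, x\,(A(z)+A(x))^k$ receives contributions only from even powers of $A(x)$ because $A(x)=x^{-1}+O(x)$ is odd, so the bookkeeping is delicate), the initial conditions for $B_k$ and $C_{k\ell}$ are not actually derived, and the ordering ambiguity in the iterated residue defining $C_{k\ell}$ is flagged but not resolved. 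Your closing suggestion --- bypassing the anomaly equations entirely via Proposition~\ref{prop:virtual class} and the Nakajima/Lehn vertex-operator calculus on $H^*(S^{[n]})$ --- is the more promising route to an actual (and unconditional) proof, but it is only sketched.
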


\begin{example}
To compute $B_k$ and $C_{k \ell}$ one uses the basic expansion
\[ A(x + z) = e^{x \frac{d}{dz} } A(z) = A(z) + x A'(z) + \frac{x^2}{2} A''(z) + \ldots. \]
For example, if $\ell = 0$ we have
\[ \Res_{x_2=0} \left( A(x_2) + A(z) \right) ( A'(x_1) - x_2 A''(x_1) + \ldots ) = A'(x_1) \]
and hence correctly
\[ C_{k0} = \mathrm{Res}_{x_1=0}\left( \frac{(A(x_1) + A(z))^{k+1}}{(k+1)!} A'(x_1) \right) = \frac{1}{(k+2)!} \mathrm{Res}_{x_1=0} \left( (A(x_1) + A(z))^{k+2} \right)' = 0. \]
In case $\ell=1$ we get
\[ C_{k 1} = \mathrm{Res}_{x_1=0} \frac{(A(x_1) + A(z))^{k+1}}{(k+1)!} \left( -\frac{1}{x_1^3} - \frac{A(z)}{x_1^2} - 2 G_2 A(z) + G_4 x_1 + \ldots \right) \]
For example
\begin{multline*} C_{11} = \Res_{x_1=0}\Bigg[ \left( 1/2 x_1^{-2} + A x_1^{-1} + (A^2/2 - 2 G_2 ) - 2 A G_2 x + \ldots \right) \\ \times \left( \frac{-1}{x_1^3} - \frac{A(z)}{x_1^2} - 2 G_2 A(z) + G_4 x_1 + \ldots \right) \Bigg]
= -2 G_2^2 + \frac{5}{6} G_4 = D_{\tau}(G_2). \end{multline*}
\end{example}

\section{$\pi$-stable pairs invariants} \label{sec:pi stable pairs}
\subsection{Overview}
The goal of this section is to extend the quasi-Jacobi form 
conjecture (Conjecture~\ref{conj:QJac introduction}) to elliptic threefolds with non-vanishing $c_3(T_{X} \otimes \omega_X)$.
The idea is to use {\em $\pi$-stable pair invariants},
which is a version of Pandharipande-Thomas invariants
introduced in \cite{FM}
adapted to the elliptic fibration structure $\pi : X \to B$.

Throughout this section, we let $\pi : X \to B$ be an elliptically fibered threefold
with a section and a Weierstra\ss model. 

\subsection{$1$-dimensional fiber sheaves} \label{subsec:stability}
Let $\Coh_{\leq 1}(X)$ be the full subcategory of the category of coherent sheaves on $X$
consisting of sheaves whose support has dimension at most $1$.
Let $L$ be an ample line bundle on $B$ and let $H$ be an ample line bundle on $X$ such that $H - \pi^{\ast}(L)$ is also ample. 
Define the slope function
\[ \mu : \Coh_{\leq 1}(X) \to \mathsf{S}:= (-\infty, \infty] \times (-\infty, \infty], \]
by
\[ \mu(F) = \left( \frac{\chi(F)}{ \ch_2(F) \cdot f^{\ast} L } , \frac{\chi(F)}{ \ch_2(F) \cdot H } \right). \]
We order $\mathsf{S}$ lexicographically (i.e. $(a,b) < (a', b')$ iff $a < a'$ or $a=a$, $b < b'$).
The function $\mu$ defines a stability condition on $\Coh_{\leq 1}(X)$, compare \cite[Sec.3, Lem. 39]{BS}.

In particular, $\mu(F) = (\infty ,a)$ for $a \in \BR$ if $F$ is a $1$-dimensional sheaves supported on fibers of $\pi$,
and $\mu(F) = (\infty, \infty)$ if $F$ is zero-dimensional.

We let $\BT \subset \Coh_{\leq 1}(X)$ be the smallest extension-closed full subcategory
which contains all $\mu$-semistable sheaves $F$ of slope $\mu(F) > (\infty, 0)$.
We define the complement
\[ \BF= \{ F \in \Coh_{\leq 1}(X) | \Hom(T,F) = 0 \text{ for all } T \in \BT \}. \]
The pair $(\BT, \BF)$ defines a torsion pair on $\Coh_{\leq 1}(X)$.
The category $\BT$ is closed under taking quotients (use the stability condition),
and $\BF$ is closed under taking subobjects.

\begin{rmk} \label{rmk:T characterization by FM}
Let $p_1, p_2 : X \times_{B} X \to X$ be the two projections, let $\CP$ be the normalized relative Poincar\'e bundle on $X \times_B X$,
and consider the Fourier-Mukai transform
\[ \phi_{\CP} = p_{2 \ast}( p_1^{\ast}( - ) \otimes \CP) : D^b(X) \to D^b(X) \]
(all functors are derived). Then one has the characterization 
\[ \BT= 
\left\{ 
F \in \Coh_{\leq 1}(X) \middle| 
\begin{array}{c} F \text{ supported on fibers of } \pi, \\ \phi_{\CP}(F) \text{ is a sheaf } \end{array}
 \right\}, \]
see for example \cite{FM}.
\end{rmk}

\subsection{$\pi$-stable pairs}
We give the definition of $\pi$-stable pairs.\footnote{In fact,
our definition of $\pi$-stable pair differs slightly from \cite{FM}.
In \cite{FM} the $\pi$-stable pairs are defined with respect to the torsion pair $\BT'$
given by semistable sheaves of slope $> (\infty, -1)$.
The definition below is better behaved because every $\pi$-stable pair can be written as $\CO_X \to F$
and not just as a abstract $2$-term complex in the derived category.}

\begin{defn}[\cite{FM}]
A $\pi$-stable pair on $\pi : X \to B$ is a pair $(F,s)$ consisting of
\begin{itemize}
\item a coherent sheaf $F \in \BF$,
\item a section $s : \CO_X \to F$ with cokernel in $\BT$.
\end{itemize}
\end{defn}

In the following lemma we show that a $\pi$-stable pair is uniquely determined by the associated $2$-term complex $\CO_X \to F$ 
up to quasi-isomorphism.
Because of that, we will always
identify $\pi$-stable pairs with the corresponding two-term complexes.

\begin{lemma}
The category of $\pi$-stable pairs is equivalent to the category of
complexes $I^{\bullet}$ in the derived category $D^b(X)$ satisfying the following properties:
\begin{itemize}
\item $\ch(I^{\bullet}) = (1,0, -\beta, -m)$ for some $\beta \in H_2(X,\BZ)$ and $m \in \BQ$,
\item $h^i(I^{\bullet}) = 0$ for $i\neq 0,1$,
\item $h^0( I^{\bullet} )$ is torsion free and $h^1(I^{\bullet})$ lies in $\BT$,
\item $\Hom( Q[-1], I^{\bullet} ) = 0$ for every $Q \in \BT$.
\end{itemize}
\end{lemma}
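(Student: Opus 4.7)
The plan is to exhibit explicit inverse functors. In the forward direction, to a $\pi$-stable pair $(F,s)$ associate the two-term complex $I^\bullet := [\CO_X \xrightarrow{s} F]$ placed in cohomological degrees $0$ and $1$, sitting in the distinguished triangle
\[ I^\bullet \to \CO_X \xrightarrow{s} F \to I^\bullet[1]. \]
Additivity of $\ch$ along the triangle gives $\ch(I^\bullet) = (1,0,-\beta,-m)$, cohomology is concentrated in degrees $0$ and $1$ by construction, $h^0(I^\bullet) = \ker(s)$ is torsion-free as a subsheaf of $\CO_X$, and $h^1(I^\bullet) = \mathrm{coker}(s) \in \BT$ by the definition of $\pi$-stable pair. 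For the last condition, apply $\Hom(Q,-)$ to the triangle to obtain the fragment $\Hom(Q,F) \to \Ext^1(Q,I^\bullet) \to \Ext^1(Q,\CO_X)$; the left term vanishes because $F \in \BF$ and $Q \in \BT$, and the right term vanishes by Serre duality, $\Ext^1(Q,\CO_X) \cong H^2(X,Q \otimes \omega_X)^{\vee} = 0$, since $Q$ has support of dimension at most one on the smooth threefold $X$.

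Conversely, given $I^\bullet$ satisfying the four conditions, I first produce a natural map $I^\bullet \to \CO_X$. Since $h^0(I^\bullet)$ is torsion-free of rank one with $c_1 = 0$, its reflexive hull is a line bundle with trivial first Chern class, hence isomorphic to $\CO_X$ up to scalar; this provides an injection $h^0(I^\bullet) \hookrightarrow \CO_X$ which lifts through the truncation triangle $h^0(I^\bullet) \to I^\bullet \to h^1(I^\bullet)[-1]$ to a morphism $I^\bullet \to \CO_X$, the obstruction $\Ext^1(h^1(I^\bullet),\CO_X)$ vanishing by the same Serre duality argument applied to $h^1(I^\bullet) \in \BT$. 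Define $F := \mathrm{cone}(I^\bullet \to \CO_X)$ with the induced map $s: \CO_X \to F$. The long exact cohomology sequence for $I^\bullet \to \CO_X \to F$, combined with the injectivity of $h^0(I^\bullet) \hookrightarrow \CO_X$, shows $F$ is concentrated in degree zero with $\ker(s) = h^0(I^\bullet)$ and $\mathrm{coker}(s) = h^1(I^\bullet) \in \BT$. The property $F \in \BF$ is where the fourth condition enters decisively: applying $\Hom(Q,-)$ to the same triangle for any $Q \in \BT$ sandwiches $\Hom(Q,F)$ between $\Hom(Q,\CO_X) = 0$ and $\Ext^1(Q,I^\bullet) = 0$, forcing $\Hom(Q,F) = 0$.

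For the categorical equivalence, morphisms must also correspond. The key observation is $\Hom(F, \CO_X) = 0$ for any coherent sheaf of support dimension at most one on a smooth threefold, so chain maps between these two-term complexes are rigid (no nontrivial homotopy classes); this identifies $\Hom_{D^b(X)}(I^\bullet, I'^\bullet)$ with commuting squares of sheaf morphisms, matching the natural notion of morphism of $\pi$-stable pairs. I expect the main obstacle to lie in the reverse direction, specifically producing a well-defined and functorial map $I^\bullet \to \CO_X$: one must ensure that the scalar ambiguity in the identification $h^0(I^\bullet)^{\vee\vee} \cong \CO_X$ is handled consistently through the construction, and verify that the induced map on $h^0$ is genuinely injective so that $F$ emerges as an honest sheaf rather than a complex with $h^{-1} \neq 0$.
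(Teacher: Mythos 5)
Your overall strategy (pass between $(F,s)$ and the cone of $I^{\bullet}\to\CO_X$) is the same as the paper's, and your forward direction is fine. But there is a genuine gap in the reverse direction, at the one point where the actual definition of $\BT$ must be used. The obstruction to lifting the map $h^0(I^{\bullet})\hookrightarrow \CO_X$ through the truncation triangle $h^0(I^{\bullet})\to I^{\bullet}\to h^1(I^{\bullet})[-1]$ lives in
\[
\Hom\bigl(h^1(I^{\bullet})[-1],\,\CO_X[1]\bigr)\;=\;\Ext^2\bigl(h^1(I^{\bullet}),\CO_X\bigr),
\]
not in $\Ext^1(h^1(I^{\bullet}),\CO_X)$ as you claim. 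By Serre duality $\Ext^2(Q,\CO_X)\cong H^1(X,Q\otimes\omega_X)^{\vee}$, and this does \emph{not} vanish for dimension-of-support reasons: $H^1$ of a $1$-dimensional sheaf is generally nonzero (e.g.\ $H^1(X,\CO_{X_s})=\BC$ for a smooth fiber $X_s$). So "the same Serre duality argument" that killed $\Ext^1(Q,\CO_X)\cong H^2(X,Q\otimes\omega_X)^{\vee}$ in your forward direction does not apply here, and without this vanishing you cannot even construct the morphism $I^{\bullet}\to\CO_X$ whose cone is supposed to be $F$.

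The vanishing $\Ext^2(Q,\CO_X)=0$ for $Q\in\BT$ is exactly where positivity of slope enters: reducing to a stable factor $Q=i_{s*}Q'$ with $\mu(Q')>0$ supported on a fiber, one has $H^1(X,Q)=0$ because $\phi_{\CP}(Q)$ is a sheaf (the WIT$_0$ characterization of $\BT$ in Remark~\ref{rmk:T characterization by FM}); this is the step the paper carries out when it shows $\Hom(Q[-2],\CO_X)=0$ and concludes $\Hom(I^{\bullet},\CO_X)=\Hom(I_C,\CO_X)=\BC$. Note that the statement would be false if $\BT$ were replaced by an arbitrary torsion subcategory of $\Coh_{\leq 1}(X)$, so this is an essential input, not a technicality. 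Your remaining steps (identifying $h^{-1}(F)=0$ via injectivity on $h^0$, $\mathrm{coker}(s)=h^1(I^{\bullet})\in\BT$, $\Hom(Q,F)=0$ from the fourth condition, and rigidity of chain maps from $\Hom(F,\CO_X)=0$) are correct and match the paper; your worry about the scalar ambiguity is resolved by the paper's computation $\Hom(I^{\bullet},\CO_X)=\BC$, which again rests on the same $\Ext^2$-vanishing.
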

\begin{proof}
Let $I^{\bullet}$ be a complex satisfying the above conditions.
We first prove that $\Hom(I^{\bullet}, \CO_X) = \BC$.
Since $h^0(I^{\bullet})$ is torsion-free of rank $1$ and has vanishing first Chern class, it is isomorphic to the ideal sheaf
$I_C$ of a curve $C$.
Consider the exact triangle
\[ I_C \to I^{\bullet} \to Q[-1] \to I_C[1], \]
for some $Q \in \BT$, and apply $\Hom( - , \CO_X)$. We see that
\[ 0 \to \Hom(I^{\bullet}, \CO_X) \to \Hom(I_C, \CO_X) \to \Hom(Q[-2], \CO_X). \]
We then have
\[ \Hom( Q[-2], \CO_X ) = \Ext^1( \CO_X, Q \otimes \omega_X ) = H^1(X,Q). \]
By using the Harder-Narasimhan flitration we may assume that $Q$ is stable and of the form $Q = i_{s \ast}Q'$ for some stable sheaf $Q' \in \Coh(X_s)$ of slope $>0$ for some $s \in B$.
But then it follows that $H^1(X,Q) = 0$ because $\phi_{\CP}(Q)$ is a sheaf by Remark~\ref{rmk:T characterization by FM}.
Hence $\Hom(I^{\bullet}, \CO_X) = \Hom(I_C, \CO_X) = \BC$. Let $F = \mathrm{Cone}( I^{\bullet} \to \CO_X )$
be the cone of the canonical morphism.
This fits into the exact sequence
\[ I^{\bullet} \to \CO_X \to F. \]
Since $h^0(I^{\bullet}) = I_C \to \CO_X$ is injective, we have that $F$ is a $1$-dimensional sheaf.
The cokernel of $s:\CO_X \to F$ lies in $\BT$ since $h^1(I^{\bullet}) \in \BT$.
Moreover, for any $T \in \BT$ applying $\Hom(T, -)$ shows that $\Hom(T,F) = \Hom( T, I^{\bullet}[1] ) = 0$ so $(F,s)$ is a $\pi$-stable pair.

Conversely, for any $\pi$-stable pair $(F,s)$ the $2$-term complex $I^{\bullet} = [\CO_X \to F]$ satisfies the above conditions.
\end{proof}

\subsection{Moduli space} \label{subsec:moduli space}
Let $P^{\pi}_{n}(X,\beta)$ be the moduli functor of $\pi$-stable pairs $I = [\CO_X \to F]$ satisfying
\[ \ch_2(F) = \beta \in H_2(X,\BZ), \quad \chi(F) = n \in \BZ. \]
The objects of $P^{\pi}_{n}(X,\beta)$ over a scheme $S$ are the two-term complexes $[\CO_{X \times S} \to \CF]$ with $\CF$ flat over $S$ such that
for all geometric points $s \in S$ the restriction $[\CO_{X,s} \to \CF_{s}]$ is a $\pi$-stable pair with the given numerical data.

\begin{thm} \label{Ppi representable} $P^{\pi}_{n}(X,\beta)$ is represented by a proper algebraic space.
\end{thm}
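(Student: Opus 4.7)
The plan is to realize $P^{\pi}_{n}(X,\beta)$ as an open substack of a moduli stack of complexes via the preceding lemma's characterization, and then verify separately algebraicity, boundedness, and properness. Concretely, I view $\pi$-stable pairs as objects in Lieblich's algebraic stack $\mathfrak{M}$ of universally gluable complexes on $X$ with vanishing negative self-$\Ext$'s. Openness of the four conditions from the lemma must be checked in families: the numerical constraint is automatic; the conditions on $h^0$ and $h^1$ are open by semicontinuity together with openness of $\BT$-membership (which reduces to openness of slope semistability under the refined slope $\mu$); and the Hom-vanishing $\Hom(Q[-1],I^{\bullet})=0$ for all $Q \in \BT$ is open by cohomology-and-base-change applied to the maximal $\BT$-quotient of $h^1$ in a flat family. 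Any automorphism of $I^{\bullet} = [\CO_X \to F]$ commutes with the nonzero section $s$, which forces triviality of the automorphism group, so the resulting moduli stack is in fact an algebraic space.

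For boundedness I would exploit the Fourier-Mukai transform $\phi_{\CP}$ of Remark~\ref{rmk:T characterization by FM}. By construction $\phi_{\CP}$ sends every $Q \in \BT$ to a sheaf, and applying it to the distinguished triangle $I^{\bullet} \to \CO_X \to F$ produces a family of complexes on the Fourier-Mukai partner whose Chern characters are determined by $(n,\beta)$ via Grothendieck-Riemann-Roch. Standard sheaf-theoretic boundedness on this partner, transported back through the inverse Fourier-Mukai, shows that the family of $\pi$-stable pairs with fixed numerical invariants is bounded. Combined with the openness step, this establishes that $P^{\pi}_{n}(X,\beta)$ is a finite-type algebraic space.

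The main obstacle is properness, which I would verify via the valuative criterion. Let $R$ be a DVR with fraction field $K$ and residue field $\kappa$, and let $I^{\bullet}_K$ be a $\pi$-stable pair over $\Spec K$. For separatedness, two extensions to $\Spec R$ differ by a complex supported on the central fiber whose cohomology must lie in both $\BT$ and $\BF$; the torsion pair structure then forces this difference to vanish, while the Hom-vanishing condition rules out the remaining obstruction. For existence, first produce any flat extension $I^{\bullet}_R$ of $I^{\bullet}_K$ to $\Spec R$ in $\mathfrak{M}$, which is possible since $\mathfrak{M}$ is itself proper on the relevant part (or alternatively by spreading out $h^0$ and $h^1$ separately). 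The central fiber $I^{\bullet}_{\kappa}$ may violate either the cohomological condition $h^1 \in \BT$ or the Hom-vanishing condition. Following a Langton-style strategy, one iteratively modifies $I^{\bullet}_R$ by elementary transformations along sub-quotients of $I^{\bullet}_{\kappa}$ lying in $\BT$ or $\BF$ as appropriate; discreteness of the numerical invariants of the central fiber, together with the boundedness step, guarantees termination.

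The hardest step is executing this Langton-style modification cleanly, because $\BT$ is generated by infinitely many slope classes rather than a single semistability condition. Ensuring that the surgery process simultaneously restores both the cohomology condition $h^1 \in \BT$ and the Hom-vanishing condition without re-introducing obstructions requires careful control of the Harder-Narasimhan filtration within $\BT$. Here the Fourier-Mukai characterization of $\BT$ in Remark~\ref{rmk:T characterization by FM} is essential, as it translates the iterative surgeries on $\pi$-stable pairs into more familiar surgeries on sheaves, where convergence follows from the classical Langton theorem on the Fourier-Mukai partner.
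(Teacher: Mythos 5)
Your overall skeleton matches the paper's: both arguments go through Artin representability by checking openness, boundedness, trivial automorphisms, separatedness, and completeness, with completeness handled by a Langton-style elementary modification. The openness, automorphism, and separatedness sketches are in the right spirit (the paper routes separatedness through two small lemmas on $\Hom(I^{\bullet},J^{\bullet}) \hookrightarrow \Hom(I^{\bullet},\CO_X)$ and $\Ext^{<0}(I^{\bullet},I^{\bullet})=0$, following Padurariu, which is essentially what you describe).

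The genuine gap is in the completeness step, which you yourself identify as the hardest part and then resolve by appealing to ``the classical Langton theorem on the Fourier-Mukai partner.'' This does not work as stated, for two reasons. First, the Fourier-Mukai characterization in Remark~\ref{rmk:T characterization by FM} only describes membership in $\BT$, i.e.\ it applies to sheaves supported on fibers of $\pi$; the sheaf $F$ of a $\pi$-stable pair is a general $1$-dimensional sheaf with horizontal components, so $\phi_{\CP}(F)$ is an honest complex rather than a sheaf, and neither ``standard sheaf-theoretic boundedness'' nor classical Langton for semistable sheaves applies to it on the partner. Second, the torsion pair $(\BT,\BF)$ is not the Harder-Narasimhan truncation of a single semistability condition (as you note, $\BT$ is generated by infinitely many slope classes), so there is no single classical semistable-reduction statement to invoke even after transforming. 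The paper instead executes the Langton argument directly on $X$: it iteratively flips extensions against maximally destabilizing subobjects $Q_n \in \BT$ (resp.\ minimally destabilizing quotients $P_n \in \BF$ for the cokernel of the section), proves monotonicity of the slopes $\mu(Q_n)$, uses boundedness of $\ch_2(Q_n)$ to force the slopes and then the subobjects themselves to stabilize, and derives a contradiction via a Quot-scheme flatness argument producing a forbidden sub- or quotient object of the generic fiber. It also runs this process \emph{twice} --- once to put $F_k$ into $\BF$ and separately (Steps 2--4) to put the cokernel of the extended section into $\BT$ --- and the second run requires first splitting off the non-flat part of the cokernel using the torsion pair. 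None of this machinery is supplied by your reduction, so the proof is incomplete at its central point.
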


For the proof we apply the Artin representability theorem.
We follow the same strategy as for Bryan-Steinberg pairs \cite{BS}
for which the representability of the moduli space was proven in \cite[Sec.3.2]{Tudor}.
In particular, we need to prove
that the moduli functor $P^{\pi}_{n}(X,\beta)$ is open, bounded, separated, complete, and has trivial automorphism.
The openness follows immediately from the openness of semistable sheaves
(compare \cite[Prop.3.4]{Tudor}). Moreover,
as observed in \cite[Lemma 4]{FM} the boundedness is proven as in \cite[Sec. 4.2]{Toda}.
The next step are the trivial automorphisms and negative Ext's:

\begin{lemma} For any $\pi$-stable pair $I^{\bullet}$, we have $\Ext^{<0}(I^{\bullet}, I^{\bullet}) = 0$.
\end{lemma}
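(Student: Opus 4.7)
The plan is to reduce the question to a Hom computation between the two cohomology sheaves of $I^{\bullet}$ via a standard hypercohomology spectral sequence, and then to use the definition of a $\pi$-stable pair to see that this Hom vanishes.

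First, by the characterization of $\pi$-stable pairs established in the previous lemma, $I^{\bullet}$ has cohomology concentrated in degrees $0$ and $1$: $h^{0}(I^{\bullet}) = I_{C}$ is the torsion-free ideal sheaf of a curve $C \subset X$, and $h^{1}(I^{\bullet}) = Q$ lies in $\mathbb{T}$, hence in particular is at most $1$-dimensional and is pure torsion. I would then invoke the hypercohomology spectral sequence
\[
E_{2}^{p,q} = \bigoplus_{j-i=q} \Ext^{p}\bigl(h^{i}(I^{\bullet}),\, h^{j}(I^{\bullet})\bigr) \;\Longrightarrow\; \Ext^{p+q}(I^{\bullet},I^{\bullet}),
\]
with $p \geq 0$ and $i,j \in \{0,1\}$, so that $q \in \{-1,0,1\}$.

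For $n \leq -2$, the conditions $p+q = n$, $p \geq 0$, and $q \geq -1$ leave no contributing terms, so $\Ext^{n}(I^{\bullet},I^{\bullet}) = 0$ formally. The only nontrivial case is $n = -1$, where the single surviving term on $E_{2}$ is
\[
E_{2}^{0,-1} = \Hom(Q, I_{C}).
\]
So everything comes down to showing that this Hom vanishes.

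The expected obstacle is not this computation itself but making sure the conventions line up, since in principle one might worry about differentials on higher pages; however, since everything collapses for degree reasons on $E_{2}$, there are no higher differentials to consider. For the final vanishing $\Hom(Q, I_{C}) = 0$, I would simply observe that $I_{C}$ is torsion-free (it is an ideal sheaf on the smooth threefold $X$, hence has no subsheaf with lower-dimensional support), while $Q \in \mathbb{T} \subset \Coh_{\leq 1}(X)$ is supported in dimension $\leq 1$. Any morphism $Q \to I_{C}$ would have image a torsion quotient of $Q$ sitting inside the torsion-free $I_{C}$, hence would be zero. This yields $\Ext^{-1}(I^{\bullet},I^{\bullet}) = 0$, completing the proof.
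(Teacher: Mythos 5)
Your argument is correct and is in substance the same as the paper's: the paper performs the dévissage via the truncation triangle $I_C \to I^{\bullet} \to Q[-1]$ and two applications of $\Hom$, which is exactly the content of your spectral sequence (for a two-term complex the double decomposition $E_2^{p,q}=\bigoplus_{j-i=q}\Ext^p(h^i,h^j)$ is justified by running the two one-variable truncation triangles in succession), and both reduce to the single nontrivial vanishing $\Hom(Q,I_C)=0$, proved identically by torsion versus torsion-free.
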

\begin{proof}
Apply $\Hom( I^{\bullet}, - )$ to $I_C \to I^{\bullet} \to Q[-1]$.
Then show $\Ext^{<0}(I^{\bullet}, I_C) = 0$ by applying $\Hom( -, I_C)$ to the same sequence. Similar for $Q[-1]$.
\end{proof}

\begin{lemma} For any two $\pi$-stable pairs $I^{\bullet}, J^{\bullet}$,
the canonical map $\Hom(I^{\bullet}, J^{\bullet}) \to \Hom(I^{\bullet}, \CO_X) = \Hom(\CO_X, \CO_X)$ is injective.
In particular, $\Hom(I^{\bullet}, I^{\bullet}) = \BC \id$.
\end{lemma}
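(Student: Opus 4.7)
The plan is to use the distinguished triangle
\[ J^{\bullet} \to \CO_X \to F_J \to J^{\bullet}[1] \]
coming from the presentation of $J^{\bullet}$ as a $\pi$-stable pair with underlying sheaf $F_J \in \BF$, together with the triangle $I_C \to I^{\bullet} \to Q[-1] \to I_C[1]$ for $I^{\bullet}$ in which $Q \in \BT$. The map in the statement is precisely the third arrow in the long exact sequence obtained by applying $\Hom(I^{\bullet}, -)$ to the first triangle, so injectivity reduces to showing
\[ \Hom(I^{\bullet}, F_J[-1]) = 0. \]

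To establish this vanishing I would apply $\Hom(-,F_J[-1])$ to the triangle $I_C \to I^{\bullet} \to Q[-1] \to I_C[1]$, giving the exact sequence
\[ \Ext^{-2}(I_C, F_J) \to \Hom(Q,F_J) \to \Hom(I^{\bullet}, F_J[-1]) \to \Ext^{-1}(I_C, F_J). \]
The two outer terms vanish because they are negative Exts between ordinary coherent sheaves, while the middle left-hand term vanishes because $F_J \in \BF$ and $Q \in \BT$ by the definition of the torsion pair. Hence $\Hom(I^{\bullet}, F_J[-1]) = 0$, which gives the desired injectivity into $\Hom(I^{\bullet}, \CO_X)$.

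For the final identification $\Hom(I^{\bullet}, \CO_X) = \Hom(\CO_X, \CO_X)$ I would invoke the computation already carried out in the proof of the category equivalence: one has $\Hom(I^{\bullet}, \CO_X) = \Hom(I_C, \CO_X) = \BC$, generated by the canonical map to $\CO_X$. In particular, specializing to $J^{\bullet} = I^{\bullet}$, the injection $\Hom(I^{\bullet}, I^{\bullet}) \hookrightarrow \Hom(I^{\bullet}, \CO_X) = \BC$ forces $\dim \Hom(I^{\bullet}, I^{\bullet}) \leq 1$, and since $\id \in \Hom(I^{\bullet}, I^{\bullet})$ is nonzero we conclude $\Hom(I^{\bullet}, I^{\bullet}) = \BC \cdot \id$.

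I do not foresee a serious obstacle here: both triangles and the torsion-pair vanishing $\Hom(\BT, \BF) = 0$ are already in place, and the negative-Ext vanishings are automatic. The only subtlety to verify carefully is that the composition $\Hom(I^{\bullet}, J^{\bullet}) \to \Hom(I^{\bullet}, \CO_X) \to \Hom(\CO_X, \CO_X)$ coming from the two canonical morphisms to $\CO_X$ matches the map in the statement, but this is simply functoriality of the construction $[\CO_X \to F]$.
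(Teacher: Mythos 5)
Your proof is correct and is essentially the argument the paper has in mind: the paper's own ``proof'' is just a citation to Padurariu's Prop.~3.2, and what you wrote out --- applying $\Hom(I^{\bullet},-)$ to $J^{\bullet}\to\CO_X\to F_J$, reducing injectivity to $\Hom(I^{\bullet},F_J[-1])=0$, and killing that group via the triangle $I_C\to I^{\bullet}\to Q[-1]$ together with $\Hom(\BT,\BF)=0$ and vanishing of negative Exts between sheaves --- is exactly that standard argument, with the identification $\Hom(I^{\bullet},\CO_X)=\BC$ correctly borrowed from the preceding lemma.
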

\begin{proof}
Same proof as in \cite[Prop.3.2]{Tudor}.
\end{proof}

By arguing as in \cite[Prop.3.6]{Tudor}
the two lemmata above imply the separatedness of the moduli functor $P^{\pi}_{n}(X,\beta)$.
We hence have to tackle the properness:

Let $R$ be a discrete valuation ring with fraction field $K$, residue field $k$, and uniformizer $\pi$.
Let $X_R = X \times \Spec(R)$ and $X_{K} = X \times \Spec(K)$.

\begin{prop} Given a $\pi$-stable pair $I = [\CO_{X_K} \to F]$ over $K$, there exists
a $\pi$-stable pair $\CI = [\CO_{X_R} \to \CF]$ over $R$ such that $\CI_{K} = I$.
\end{prop}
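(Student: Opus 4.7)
The plan is to adapt Langton's valuative-criterion argument to the setting of $\pi$-stable pairs, paralleling the treatment of Bryan--Steinberg pairs in \cite[Sec.~3]{Tudor}. First I would extend the generic object to an $R$-flat family $\CI^{(0)} = [\CO_{X_R} \to \CF^{(0)}]$: because $F$ is a coherent sheaf of bounded support on $X_K$, a standard noetherian-induction argument produces an $R$-flat coherent extension $\CF^{(0)}$ on $X_R$ with generic fiber $F$, and the section $s_K$ extends, after clearing denominators if necessary, to a section $s^{(0)} : \CO_{X_R} \to \CF^{(0)}$ whose restriction to $X_K$ is $s_K$. This yields a flat family of two-term complexes with the correct generic fiber.

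The closed fiber $\CI^{(0)}_k$ need not be $\pi$-stable; it can fail in two essentially dual ways: (a) $\CF^{(0)}_k$ admits a non-zero maximal $\BT$-subsheaf $T \subset \CF^{(0)}_k$; or (b) $\operatorname{coker}(s^{(0)}_k)$ admits a non-zero $\BF$-quotient. In case (a) I would perform the elementary modification
\[ \CF^{(1)} := \ker\bigl(\CF^{(0)} \twoheadrightarrow \CF^{(0)}_k / T\bigr), \]
which remains $R$-flat with the same generic fiber, and whose closed fiber sits in the short exact sequence
\[ 0 \to \CF^{(0)}_k/T \to \CF^{(1)}_k \to T \to 0, \]
so that the offending $\BT$-piece is ``flipped'' from a subobject to a quotient. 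Case (b) is handled by the dual push-out modification along a chosen lift of a generator of the offending $\BF$-quotient, which correspondingly enlarges the section. Some care is needed to lift $s^{(0)}$ through the modified family: in case (a) one first performs cokernel corrections of type (b) to reduce to the situation where the image of $s^{(0)}_k$ meets $T$ trivially, so that $s^{(0)}$ factors through $\CF^{(1)}$.

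The main obstacle is showing that this iterative procedure terminates. Here I would invoke a Langton-style argument: the family of all potential intermediate sheaves $\{\CF^{(i)}\}$ is bounded, essentially by the boundedness of $\pi$-stable pairs with fixed numerical type established in \cite[Lemma~4]{FM}; and a suitable discrete invariant, for instance the reduced Hilbert polynomial of the maximal $\BT$-subsheaf of $\CF^{(i)}_k$ ordered via the slope function $\mu$ of Section~\ref{subsec:stability}, strictly decreases along the sequence of modifications. This forces halting after finitely many iterations. The stable limit $\CI = [\CO_{X_R} \to \CF^{(\infty)}]$ is then an $R$-flat family of $\pi$-stable pairs whose generic fiber is $I_K$, which completes the verification of the valuative criterion for completeness of the moduli functor $P^{\pi}_{n}(X,\beta)$.
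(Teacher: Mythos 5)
Your overall strategy — extend to an $R$-flat family, then perform Langton-style elementary modifications at the closed fiber to repair the two failure modes (a $\BT$-subsheaf of $\CF_k$, resp.\ an $\BF$-quotient of the cokernel of the section) — is exactly the strategy of the paper, which likewise follows the treatment of Bryan--Steinberg pairs in \cite{Tudor}. Two organizational differences are worth noting: the paper first fixes the sheaf so that its closed fiber lies in $\BF$ and only \emph{then} extends the section (after multiplying by a power of the uniformizer), which sidesteps your issue of lifting $s^{(0)}$ through the modification; and the paper has an extra intermediate step handling the $R$-torsion part of the cokernel $\CK$ (which need not be flat over $R$), which your sketch omits entirely.

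The genuine gap is the termination argument, which is the technical heart of the proof. You assert that a discrete invariant such as the reduced Hilbert polynomial or slope of the maximal destabilizing piece \emph{strictly} decreases along the modifications. It does not, and no such strictly decreasing invariant is available: in Langton's method the slopes $\mu(Q_n)$ of the destabilizing subsheaves are only monotone (non-increasing in Step 1, non-decreasing in the dual step), take values in a finite set because the $\ch_2(Q_n)$ are bounded, and hence eventually become \emph{constant} — at which point your proposed invariant stops moving and your argument stalls exactly where the real work begins. The paper's proof then shows that once the slope stabilizes the destabilizing subobjects form an increasing chain $Q_0 \subset Q_1 \subset \cdots$ which itself stabilizes, that the relevant extension splits so that $\CH^n_k \cong Q_0 \oplus P_0$ for all large $n$, that consequently $\CH/\CH^n$ is flat over $R/\pi^n$ for every $n$, and finally uses properness of the Quot scheme to produce an $R$-flat quotient (resp.\ subobject) whose generic fiber violates $F \in \BF_K$ (resp.\ $\mathrm{Coker}(s_K) \in \BT_K$). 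Without this stabilization-plus-contradiction argument the procedure could a priori run forever, so your proof as written is incomplete.
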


\begin{proof}
The argument is a modification of Langton's original semistable reduction, compare \cite[Sec.2.B]{HL}.
For the modification we follow \cite[Prop.3.7]{Tudor} with some exceptions.
(We also refer to \cite{Lo} for similar arguments in a related case.)

Let $\CH$ be a $R$-flat extension of $F$ (flat over $R$ means that $\CH$ is locally torsion-free over $R$,
so just extend $F$ to any coherent sheaf and quotient out the $R$-torsion).

\vspace{4pt}
\textbf{Step 1.} We want to find a subsheaf $\CH' \subset \CH$ such that the restriction $\CH'_k$ lies in $\BF$.
Assume that $\CH_k$ does not lie in $\BF$.
Then there exists a maximally destabilizing subsheaf $Q_0 \subset \CH_k$ with $Q_0 \in \BT$.
In particular, $Q_0$ is semistable and all Harder-Narasimhan factors of $\CH_k$ with respect to $\mu$-stability have smaller slope then $Q_0$.
Consider the sequence
\[ 0 \to Q_0 \to \CH_k \to P_0 \to 0. \]
Let $\iota : X_k \to X_R$ be the inclusion and define the kernel
\[ \CH^1 = \Ker( \CH \to \iota_{\ast} \CH_k \to \iota_{\ast} P_0 \to 0 ). \]
We obtain the short exact sequence
\[ 0 \to \CH^1 \to \CH \to P_0 \to 0. \]
Restricting to $X_k$ and noting $\mathrm{Tor}_k(P_0, k) = P_0$ we obtain
\[ 0 \to P_0 \to \CH^1_k \to \CH_k \to P_0 \to 0 \]
and hence the short exact sequence
\[ 0 \to P_0 \to \CH^1_k \to Q_0 \to 0. \]
We see that $\CH^1_k$ is an extension of $P_0, Q_0$ in the opposite way, that is we have flipped the extension.
In particular,
$\CH_k$ and $\CH^1_k$ have the same Chern character, same Hilbert polynomial, and same $\mu$-slope.

We want to argue that $\CH^1_k$ brings us closer to a sheaf in $\BF$.
If $\CH^1_k$ lies in $\BF$ we are done. Otherwise, consider again a maximally destabilizing subsheaf
\[ 0 \to Q_1 \to \CH^1_k \to P_1 \to 0. \]
We form the diagram of rows of short exact sequences:
\begin{equation} \label{comm diag}
\begin{tikzcd}
0 \ar{r} & L \ar{r} \ar{d} & Q_1  \ar{d} \ar{r} \ar{dr}{f} & \mathrm{Im}(f) \ar{d}\ar{r} & 0 \\
0 \ar{r} & P_0 \ar{r} & \CH^1_k \ar{r} & Q_0 \ar{r} & 0 
\end{tikzcd}
\end{equation}
where $f : Q_1 \to \CH^1_k \to Q_0$ is the composition and $L = \mathrm{Ker}(f)$.

If $\mu(Q_1) > \mu(Q_0)$ then $f$ must vanish by stability, so $L=Q_1$;
moreover, the injection $L \hookrightarrow P_0$ together with the fact that the largest slope of a Harder-Narasimhan factor of $P_0$ is strictly smaller than $\mu(Q_0) < \mu(Q_1)$,
then shows that $L=0$, which is a contradiction with the choice of $Q_1$.
Hence $\mu(Q_1) \leq \mu(Q_0)$.
We now iterate the above process. If it does not stop, we obtain a sequence
\[ \mu(Q_0) \geq \mu(Q_1) \geq \mu(Q_2) \geq \ldots .\]
Since each $Q_n$ is a destabilizing subobject of $\CH^n_k$ we must have $\mu(Q_n) > \mu(\CH^n_k) = \mu(\CH_k)$.
Moreover, the support of $Q_n$ lies in the support of $\CH^n_k$; since $\ch_2(\CH^n_k)$ is independent of $n$, we find that $\ch_2(Q_n)$ lies in a bounded set,
so the denominators of $\mu(Q_n)$ lie in a finite set. We find that there exists an $n_0 \geq n$ such that
$\mu(Q_n) = \mu(Q_{n_0})$ for all $n \geq n_0$.
We assume that $n_0 = 0$, and hence $\mu(Q_n) = \mu(Q_0)$ for all $n$.

We return to the diagram \eqref{comm diag}.
Since $\mu(Q_0) = \mu(Q_1)$ we see that $L$ is semi-stable of slope $\mu(Q_0)$ and hence lies in $\BT$ (use that the category of semi-stable sheaves of a given slope is abelian);
the injection $L \to P_0$ then implies $L=0$ by the torsion-pair property, so that $Q_0 \subset Q_1$, and hence
\[ Q_0 \subset Q_1 \subset Q_2 \subset \ldots . \]
Since $\ch_2(Q_n)$ is bounded, it must stabilize at some point $n_0$, at which point we get $Q_{n} = Q_{n+1} = \ldots $ since the slope has also stabilized.
By shifting indices (assuming $n_0 = 1$) we hence can assume that $Q_0 = Q_1 = \ldots $.
This implies that the second row in \eqref{comm diag} splits and hence
\[ \CH^n_k = Q_n \oplus P_n \cong Q_{n-1} \oplus P_{n-1} \]
and the summands $Q_n$ and $Q_{n-1}$ coincide. Hence also $P_{0} = P_1 = \ldots $.

We now argue for a contradiction. The defining sequence
\[ 0 \to \CH^n \to \CH^{n-1} \to \iota_{\ast} P_{n-1} \to 0 \]
and $P_0 = P_1 = \ldots $ shows that for every $n$ we have a sequence
\[ 0 \to \iota_{\ast} P_{0} \to \CH / \CH^n \to \CH / \CH^{n-1} \to 0 \]
and hence $(\CH/\CH^n)_k = (\CH^0/\CH^1)_k = P_0$.
This shows that $\CH/\CH^n$ is flat over $R/\pi^n$, see \cite[2.1.3]{HL}, and the natural morphism
$\CH/\pi^n \CH \to \CH/\CH^n$ (obtained from the inclusion $\pi^n \CH \subset \CH^n$) is hence an $R/\pi^n$-flat quotient.
Let $p$ be the Hilbert polynomial of $P_0$. It follows that the base change of the proper map
$\mathrm{Quot}_{X_R/R}( \CH, p ) \to \Spec(R)$ to $\Spec(R/\p^n)$ is surjective for all $n$.
Hence $\mathrm{Quot}_{X_R/R} \to \Spec(R)$ is surjective.
In particular, there exists a surjection $\CH \to \CP_0$ with $\CP_0$ flat over $R$.
The kernel $\CQ_0 = \mathrm{Ker}(\CH\to \CP_0)$ is a flat family of sheaves in $\BT$, and
$\CQ_K \subset \CH_K$ yields a non-trivial subobject in $\CH_K \in \BF_K$, a contradiction.

\vspace{4pt}
\textbf{Step 2. (Extending the section)}
By Step 1 there exists a subsheaf $\CH' \subset \CH$ (automatically flat over $R$) such that $\CH'_k$ lies in $\BF$.
We replace $\CH$ by such a $\CH'$ and hence from now on assume that $\CH_k$ lies in $\BF$.

The $R$-module $\Hom_{X_R}(\CO_{X_R}, \CH)$ has a canonical section after tensoring with $K$.
After multiplying with a sufficiently high power of $\pi$ it follows that there exists a global section
\[ s : \CO_{X_R} \to \CH \]
estending the given section $\CO_{X_R} \to F$.
Let
\[ \CK= \mathrm{Coker}(s). \]
We want to argue now that $\CH$ and $s$ can be replaced so that $\CK_k$ lies in $\BT$.
We do this in two more steps.

\vspace{4pt}
\textbf{Step 3.} Let $\CK_f \subset \CK$ be the largest flat subsheaf of $\CK$ over $R$ (this exists since $R$ is a DVR),
and let $M = \CK/\CK_f$. We have that $M$ is supported over (a finite thickening of\footnote{We view $M$ as a $\CO_X[\epsilon]/\epsilon^m$ module for some $m>0$ below.}) the central fiber. Consider the unique exact sequence
\[ 0 \to A \to M \to B \to 0 \]
with $A \in \BT$ and $B \in \BF$ provided by the torsion pair. 
Define 
\[ \CH' = \mathrm{Ker}( \CH \to M \to B_k ). \]

Restricting $0 \to \CH' \to \CH \to \iota_{\ast} B_k \to 0$ to $k$ yield
\[ 0 \to \iota_{\ast} B_k \to \CH'_k \to \mathrm{Ker}(\CH_k \to \iota_{\ast} B_k) \to 0. \]
Since $\mathrm{Ker}(\CH_k \to \iota_{\ast} B_k)$ is a subobject of $\CH_k \in \BF$, it lies in $\BF$,
and hence $\CH'_k$ is an extensions of objects in $\BF$ and hence also lies in $\BF$.

Replacing $\CH$ by $\CH'$ and iterating this procedure above a finite number of times (the number is equal to the minimum $m$ such that $\pi^m B = 0$),
we can assume that the torsion part of $\CK$, i.e. $\CK / \CK_f$ lies in $\BT$.
%
%

\vspace{4pt}
\textbf{Step 4.} If $\CK_k$ is not in $\BT$, there exists a minimally destabilzing quotient $P_0$ of $\CK_k$
with $P_0 \in \BF$.
Consider the exact sequence
\[ 0 \to Q_0 \to \CK_k \to P_0 \to 0. \]
Define
\[ \CH^1 = \mathrm{Ker}( \CH \to \CH_k \to \CK_k \to P_0 \to 0 ). \]
The section $\CO \to \CH$ factors through $\CH^1$ with cokernel $\CK^1$.

Restricting $0 \to \CH^1 \to \CH \to \iota_{\ast} P_0 \to 0$ to $X_k$ yields
\[ 0 \to P_0 \to \CH^1_k \to \CH_k \to P_0 \to \]
Since $\CH_k \in \BF$ and $\BF$ is closed under subobjects, we have $\mathrm{Ker}(\CH_k \to P_0)$ lies in $\BF$,
and hence $\BH^1_k$ lies in $\BF$ since $\BF$ is closed under extension.

The cokernel $\CK^1$ fits into the short exact sequence
\[ 0 \to \CK^1 \to \CK \to \iota_{\ast} P_0 \to 0. \]
Restricting to $X_k$ and observing that $\CK$ does not have to be flat over $R$ yields the exact sequence (not necessarily exact on the left)
\[ P_0 \to \CK^1_k \to \CK_k \to P_0. \]
Hence we obtain the sequence
\[ P_0 \to \CK^1_k \to Q_0 \to 0. \]

If $\CK^1_k \in \BT$ we are done, otherwise pick again a minimally destabilizing quotient $P_1$ of $\CK^1_k$.
We then have again $P_1 \in \BF$. Consider the sequence
\[ 0 \to Q_1 \to \CK^1_k \to P_1 \to 0. \]
Consider the diagram
\begin{equation} \label{comm diag2}
\begin{tikzcd}
  & P_0 \ar{dr}{f} \ar{r} \ar{d} & \CK^1_k  \ar{d} \ar{r} & Q_0  \ar{d}\ar{r} & 0 \\
0 \ar{r} & \mathrm{Im}(f) \ar{r} & P_1 \ar{r} & L \ar{r} & 0.
\end{tikzcd}
\end{equation}
where $f : P_0 \to \CK^1_k \to P_1$ and $L = \mathrm{Coker}(f)$.

If $\mu(P_1) < \mu(P_0)$ then $f=0$ by stability and hence $L=P_1$.
Since every Harder-Narasimhan factor of $Q_0$ has slope strictly bigger than $\mu(P_0)$, and we have the quotient $Q_0 \to L$, and we get $L=0$, which is a contradiction to the choice of $P_1$.
Hence $\mu(P_1) \geq \mu(P_0)$.
We now iterate again the above process, which gives a series of inequalities
\[ \mu(P_0) \leq \mu(P_1) \leq \mu(P_2) \leq \ldots \ . \]
Since $P_n$ is a quotient of $\CH_k^n$ and $\ch_2(\CH_k^n)$ is independent of $n$, we see that $\ch_2(P_n)$ lies in a finite set.
Moreover, $\mu(P_n) \geq \mu(\CH_k^n) = \mu(\CH_k)$. 
Hence there exists an $n_0$ such that $\mu(P_n) = \mu(P_{n+1})$ for all $n \geq n_0$. We assume $n_0=0$.
Then one finds that $L$ is semi-stable of the same slope as $\mu(P_0)= \mu(P_1)$,
and hence $L=0$ as a quotient of $Q_0$. This gives the series of surjections
$P_0 \twoheadrightarrow P_1 \twoheadrightarrow P_2 \twoheadrightarrow \ldots$ which has to stabilize at some point $n_0$.
We take $n_0 = 0$, so $P_0 = P_1 = \ldots$.

We argue now by contradiction. The sequence
\[ 0 \to \CK^n \to \CK^{n-1} \to \iota_{\ast} P_0 \to 0 \]
(by mapping to $0 \to \CK \to \CK \to 0 \to 0$ and applying the snake lemma) 
shows that we have the exact sequence
\[ 0 \to \iota_{\ast} P_0 \to \CK / \CK^n \to \CK / \CK^{n-1} \to 0, \]
and $\CK / \CK^1 = P_0$. We obtain that
$\CK / \CK^n$ is flat over $R / \pi^n$. Moreover, the quotient $\CK / \pi^n \CK \to \CK / \CK_n$ 
factors through $\CK_f / \pi^n \CK_f \to \CK/ \CK_n$ and then arguing as in Step 1 lifts to a quotient
$\CK_f \twoheadrightarrow \CP_0$ where $\CP_0$ is flat family over $R$ of semistable sheaves of slope $\mu(P_0)$.
Base changing to $K$ yields a non-trivial surjection $\CK_{K} \twoheadrightarrow (\CP_0)_K$ from an element of $\BT_K$ to $\BF_K$, a contradiction.
\end{proof}

\subsection{Invariants}
By work of Huybrechts and Thomas \cite{HT} the moduli space
of $\pi$-stable pairs carries a virtual fundamental class
\[
\left[ P^{\pi}_{n}(X,\beta) \right]^{\vir} \in A_{\mathrm{vd}}(P^{\pi}_{n}(X,\beta)), \quad \mathrm{vd} = \int_{\beta} c_1(X).
\]
Let $\CO \to \BF$ denote the universal $\pi$-stable pair on $P_n^{\pi}(X,\beta) \times X$.
Descendent classes on the moduli space are defined as before for $\gamma \in H^{\ast}(X)$ and $k \in \BZ$ by
\[ \ch_k(\gamma) = \pi_{P \ast}( \pi_X^{\ast}(\gamma) \cdot \ch_k( \BF - \CO )) \in H^{\ast}(P_n^{\pi}(X,\beta)). \]
Define the $\pi$-stable pair invariants by
\[
\left\langle \ch_{k_1}(\gamma_1) \cdots \ch_{k_n}(\gamma_n) \right\rangle^{X, \piPT}_{n,\beta}
=
 \int_{[ P^{\pi}_{n} (X,\beta) ]^{\text{vir}}} \prod_i \ch_{k_i}(\gamma_i).
\]

For any fixed class $\beta \in H_2(B,\BZ)$ let
\[
\left\langle \ch_{k_1}(\gamma_1) \cdots \ch_{k_n}(\gamma_n) \right\rangle^{X, \piPT, \pi}_{\beta}
= 
\sum_{\substack{\widetilde{\beta} \in H_2(X,\BZ) \\ \pi_{\ast} \widetilde{\beta} = \beta }}
\sum_{m \in \frac{1}{2} \BZ}
i^{2m} p^{m}  q^{W \cdot \widetilde{\beta}} \left\langle \ch_{k_1}(\gamma_1) \cdots \ch_{k_n}(\gamma_n) \right\rangle^{X,\piPT}_{m+\frac{1}{2} d_{\widetilde{\beta}}, \widetilde{\beta}}
\]
where $i = \sqrt{-1}$ and $d_{\beta} = \int_{\widetilde{\beta}} c_1(T_X)$.
We also define the normalized series
\[
Z_{\beta}^{\piPT}\left( \ch_{k_1}(\gamma_1) \cdots \ch_{k_n}(\gamma_n) \right)
:= 
\frac{ \left\langle \ch_{k_1}(\gamma_1) \cdots \ch_{k_n}(\gamma_n) \right\rangle^{X, \piPT, \pi}_{\beta} }{\left\langle 1 \right\rangle^{X, \piPT, \pi}_{0} }
\]

We can compute the normalization factor here explicitly:
\begin{prop} \label{prop:pi PT normalization}
\[ \left\langle 1 \right\rangle^{\piPT,\pi}_{\beta=0} = \prod_{m \geq 1} (1-q^m)^{-e(B) - c_1(N) \cdot (c_1(T_B) + c_1(N))} \]
where $N = N_{B/X}$ is the normal bundle of the section.
\end{prop}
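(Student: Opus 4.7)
The strategy is to parallel the proof of Lemma~\ref{lemma:basic computation} by identifying the moduli space of $\pi$-stable pairs with $\pi_\ast\widetilde\beta=0$ with the Hilbert scheme $B^{[d]}$ (and only at Euler characteristic zero), then reusing the same virtual class computation. The classes $\widetilde\beta\in H_2(X,\BZ)$ with $\pi_\ast\widetilde\beta=0$ are exactly the multiples $dF$ of the fiber class; by Lemma~\ref{lemma:Chern classes} a direct calculation gives $W\cdot dF=d$ and $d_{dF}=0$, so
\[
\left\langle 1\right\rangle^{\piPT,\pi}_{0}=\sum_{d\ge 0}q^d\sum_{m\in\frac{1}{2}\BZ}i^{2m}p^m\left\langle 1\right\rangle^{\piPT}_{m,dF}.
\]
It then suffices to show: (i) $P^\pi_m(X,dF)=\emptyset$ for $m\ne 0$; (ii) $P^\pi_0(X,dF)\cong B^{[d]}$ via $Z\mapsto[\CO_X\to\CO_{\pi^{-1}(Z)}]$; and (iii) the induced perfect obstruction theory coincides with the one used in Lemma~\ref{lemma:basic computation}.

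For (i) and (ii), let $(F,s)$ be a $\pi$-stable pair with $\ch_2(F)=dF$. Since $\BT$ contains every zero-dimensional sheaf, $F\in\BF$ must be pure one-dimensional; combined with $\pi_\ast\ch_2(F)=0$ its support is $\pi^{-1}(Z)$ for a unique length-$d$ subscheme $Z\subset B$. At a reduced point $b\in Z$, the corresponding summand of $F$ over $E_b$ is a line bundle $L_b$. Applying the $\BF$-condition with $T=\iota_{b\ast}L_b$---which belongs to $\BT$ iff $\deg L_b>0$ by Remark~\ref{rmk:T characterization by FM}---forces $\deg L_b\le 0$. If the restricted section $s|_{E_b}$ vanished, then $L_b$ would appear as a direct summand of $\mathrm{coker}(s)$, and since $\BT$ is closed under quotients this would give $L_b\in\BT$, contradicting $\deg L_b\le 0$. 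Hence $s|_{E_b}\ne 0$, so $\deg L_b\ge 0$, and altogether $L_b\cong\CO_{E_b}$. Induction on thickenings of $Z$, applying the same analysis to each graded piece of the filtration of $F$ by powers of $\pi^\ast I_{Z_{\mathrm{red}}}$, yields $F\cong\CO_{\pi^{-1}(Z)}$ with $s$ the canonical surjection. The projection formula $R\pi_\ast\CO_{\pi^{-1}(Z)}=\CO_Z-N|_Z$ from the proof of Lemma~\ref{lemma:basic computation} then gives $\chi(F)=0$, establishing (i) and the set-theoretic bijection in (ii); a standard deformation-theoretic argument on universal families upgrades the bijection to a scheme-theoretic isomorphism.

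For (iii), the perfect obstruction theory at $I=[\CO_X\to\CO_{\pi^{-1}(Z)}]$ is given by $R\Hom(I,I)_0[1]$, the same complex used in the standard PT case. Hence the virtual class is identical to the one analyzed in Lemma~\ref{lemma:basic computation}, and the computation there yields
\[
\sum_{d\ge 0}q^d\left\langle 1\right\rangle^{\piPT}_{0,dF}=\prod_{m\ge 1}(1-q^m)^{-e(B)+c_1(N)\cdot(K_B-c_1(N))},
\]
which equals the claimed formula upon substituting $K_B=-c_1(T_B)$. The main obstacle is the inductive step on thickenings: while the fiber-by-fiber analysis on a reduced smooth fiber is clean, extending it uniformly over non-reduced $Z$ and singular Weierstrass fibers requires ruling out exotic pure one-dimensional sheaves on $\pi^{-1}(Z)$ not isomorphic to $\CO_{\pi^{-1}(Z)}$, which is where the Fourier-Mukai characterization of the torsion pair $(\BT,\BF)$ from Remark~\ref{rmk:T characterization by FM} becomes essential in pinning down the sheaf structure on each thickening.
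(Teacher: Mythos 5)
Your reduction of the statement to showing that every $\pi$-stable pair in class $d\mathsf{f}$ is $[\CO_X \to \CO_{\pi^{-1}(Z)}]$ with $\chi=0$, followed by an appeal to Lemma~\ref{lemma:basic computation}, is the right skeleton, and your fiber-by-fiber analysis is correct where you actually carry it out (reduced $Z$, smooth fibers). But the proof is not complete: the ``induction on thickenings'' and the treatment of singular Weierstra{\ss} fibers are precisely the hard part, and you leave them open, gesturing at the Fourier--Mukai characterization of $\BT$ from Remark~\ref{rmk:T characterization by FM} without actually deploying it. Ruling out, say, a ribbon on $\pi^{-1}(2b)$ whose non-reduced direction twists along the fiber, or a rank-one torsion-free non-locally-free sheaf on a nodal or cuspidal fiber, is a genuine classification problem; as written, your argument does not solve it, so there is a real gap exactly where you say the ``main obstacle'' is.

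The paper's proof avoids any such classification by a two-sided Euler-characteristic bound. Since $F \in \BF$, every Harder--Narasimhan factor of $F$ has slope $\le (\infty,0)$, i.e.\ nonpositive Euler characteristic, so $n=\chi(F)\le 0$. In the other direction, the section factors as $\CO_X \twoheadrightarrow \CO_C \hookrightarrow F$ with $C=\pi^{-1}(z)$ for a zero-dimensional $z\subset B$; since $R\pi_\ast\CO_X=\CO_B-N$ gives $\chi(\CO_{\pi^{-1}(z)})=0$, and $Q=\mathrm{coker}(\CO_C\to F)$ is a quotient of $\mathrm{coker}(s)\in\BT$, hence lies in $\BT$ and has $\chi(Q)\ge 0$, one gets $n=\chi(\CO_C)+\chi(Q)\ge 0$. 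Therefore $n=0$, $Q=0$ and $F=\CO_{\pi^{-1}(z)}$, so $P^{\pi}_{n}(X,d\mathsf{f})=\varnothing$ for $n\neq 0$ and $P^{\pi}_{0}(X,d\mathsf{f})=P_{0}(X,d\mathsf{f})$ with no case analysis on thickenings or singular fibers; your steps (ii) and (iii) then reduce to the already-proved Lemma~\ref{lemma:basic computation}. To repair your write-up, replace the inductive classification of $F$ by this $\chi$-pincer.
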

\begin{proof}
Let $[\CO_X \to F] \in P^{\pi}_{n}(X, d \mathsf{f})$ where $\mathsf{f} \in H_2(X,\BZ)$ is the fiber class of the fibration $X \to B$.
Since $F \in \BF$ all Harder-Narasimhan factors of $F$ must have holomorphic Euler characteristic $\chi \leq 0$.
This shows that $n \leq 0$.
On the other hand, $\CO_X \to F$ factors as $\CO_X \twoheadrightarrow \CO_C \hookrightarrow F$ for a curve of the form $C=\pi^{-1}(z)$ for a $0$-dimensional subscheme $z \subset B$.
The cokernel $Q = \mathrm{Coker}(\CO_C \to F)$ lies in $\BT$ hence we get that also $\chi(F) = \chi(\CO_C) + \chi(Q) \geq 0$.
Hence $F = \CO_C$ and $n=0$, and $\CO_X \to F$ is an ordinary stable pair.
We hence see that
\begin{equation} \label{dadf}
P^{\pi}_{n}(X, d \mathsf{f}) = 
\begin{cases} 
\varnothing & \text{ if } n \neq 0 \\
P_{0}(X,d \mathsf{f} ) & \text{ if } n=0.
\end{cases}
\end{equation}
The claim hence follows from Lemma~\ref{lemma:basic computation}.
\end{proof}

Recall from Section~\ref{ex:C2xE} the equivariant threefold $\BC^2 \times E$.
We compute the corresponding series of $\pi$-PT invariants in a special case.
This example shows that Conjecture~\ref{conj:QJac pi-PT intro} can not be extended to the equivariant case.

\begin{prop} For arbitrary $t_1,t_2$ and $\gamma \in H^{\ast}(E)$ we have
\begin{align*}
    \sum_{k \geq 0} Z^{E\times \BC^2,\piPT}(\tch_k(\gamma))x^k &= -\left(\int_{E} \gamma\right) \frac{1}{t_1 t_2 S(i\sqrt{t_1 t_2}x)} \prod_{n\geq 1} \frac{(1-q^n)(1-q^n e^{-(t_1+t_2)x})}{(1-q^n e^{-t_1 x})(1-q^n e^{-t_2 x})}\\
    &=-\left(\int_{E}\gamma\right)\frac{1}{t_1t_2}\exp\left( -\sum_{i,j\geq1}(-1)^{i+j}\frac{t_1^it_2^j}{i!j!} z^{i+j}G_{i+j}^{\delta_{i,j}} \right),
\end{align*}
where 
\[
    G_k^s = -s\frac{ \text{B}_{k}}{k}+\sum_{d\geq 1} \frac{q^d}{1-q^d}d^{k-1}.
\]
Almost all non-zero $x$-coefficients on the left hand side are not quasi-Jacobi forms.
\end{prop}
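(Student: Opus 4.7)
The plan is to identify the $\pi$-stable pair moduli with the standard PT moduli in the relevant range, apply the equivariant localization already carried out in Section~\ref{ex:C2xE}, and then convert the resulting partition sum into a product via a $q$-hypergeometric identity.

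First, the same slope and Euler-characteristic argument as in the proof of Proposition~\ref{prop:pi PT normalization} shows that on $X = \BC^2 \times E \to \BC^2$, any $\pi$-stable pair $[\CO_X \to F]$ with $\pi_\ast \mathrm{ch}_2(F) = 0$ must satisfy $F = \CO_C$ for $C = \pi^{-1}(z)$ with $z \in \Hilb^d(\BC^2)$; in particular $\chi(F) = 0$ and the total class is $d\mathsf{f}$. Hence $P^{\pi}_n(X, \widetilde\beta)$ is empty except when $\widetilde\beta = d\mathsf{f}$ and $n = 0$, in which case it is isomorphic to $\Hilb^d(\BC^2)$ carrying the same Huybrechts--Thomas obstruction theory as the ordinary PT moduli, so $\pi$-PT and PT invariants coincide here. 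Specializing the $\BG_m^2$-localization leading to equation~\eqref{4ygfg} to a single insertion yields
\begin{equation*}
\sum_{k \geq 0} Z^{X, \piPT}(\tch_k(\gamma))\, x^k
= \Big( \int_E \gamma \Big) \prod_{n \geq 1}(1-q^n) \cdot \frac{1 - e^{x t_1}}{t_1 t_2\, S(i\sqrt{t_1 t_2}\, x)} \sum_\lambda q^{|\lambda|} \sum_{i \geq 1} e^{(-\lambda_i t_2 - i t_1)x}.
\end{equation*}

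Next I would prove the $q$-series identity needed to transform the partition sum into a product. With $u = e^{-t_1 x}$ and $v = e^{-t_2 x}$, decomposing each $\lambda$ into its first $i-1$ parts, its $i$-th part, and its remaining parts gives
\begin{equation*}
\sum_{\lambda} q^{|\lambda|}\, v^{\lambda_i} \;=\; \frac{1}{(q;q)_{i-1}\,(q^i v; q)_\infty},
\end{equation*}
and summing against $u^i$ followed by the $q$-binomial theorem $\sum_{n \geq 0} (a;q)_n z^n/(q;q)_n = (az;q)_\infty/(z;q)_\infty$ yields
\begin{equation*}
(1 - u^{-1}) \sum_{\lambda} q^{|\lambda|} \sum_{i \geq 1} u^i v^{\lambda_i} \;=\; -\frac{(q u v;q)_\infty}{(qu;q)_\infty (qv;q)_\infty}.
\end{equation*}
Substituting this and noting that $1 - e^{x t_1} = 1 - u^{-1}$ produces the first (product) form of the proposition.

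For the exponential form, I would take the logarithm of the product, expand $\log(1 - q^n e^{-t x}) = -\sum_{m \geq 1} q^{nm} e^{-t m x}/m$, and combine the four log-factors via $(1 - e^{-t_1 m x})(1 - e^{-t_2 m x}) = 1 - e^{-t_1 m x} - e^{-t_2 m x} + e^{-(t_1+t_2) m x}$. Further expanding $e^{-t m x}$ in $x$ and summing over $m$ via the divisor-sum presentation of $G_k^0$ produces the asserted exponent with $G_{i+j}^0$ for $i \neq j$; the diagonal ($i = j$) constant-in-$q$ terms with Bernoulli numbers come from absorbing the prefactor $1/S(i\sqrt{t_1 t_2}\, x)$, expanded via $\log S(u) = \sum_{k \geq 1} B_{2k} u^{2k}/(2k\,(2k)!)$, into the exponent and producing the $\delta_{ij}$ in $G_{i+j}^{\delta_{ij}}$. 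Finally, the non-quasi-Jacobi assertion follows by observing that the $x$-coefficients contain divisor sums $G_k^0$ weighted by asymmetric monomials $t_1^i t_2^j$ with $i \neq j$, and that no such combination lies in $\Jac[\A, G_2] \supset \QJac$ except after the anti-diagonal specialization $t_1 + t_2 = 0$; this can be confirmed directly by inspecting a low-order $x$-coefficient. The most technical step is the $q$-series identity above, but it follows cleanly from the $q$-binomial theorem once the generating function for partitions with a marked $i$-th part is in hand.
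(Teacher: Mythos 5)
Your argument follows the same route as the paper's: the reduction to the case $(n,\widetilde\beta)=(0,d\mathsf{f})$ with moduli space $\Hilb^d(\BC^2)$ is exactly the argument behind \eqref{dadf} in Proposition~\ref{prop:pi PT normalization}, and the starting point is the one-insertion case of \eqref{4ygfg}. The only real difference is how the partition sum is turned into a product: the paper cites \cite[Lem.~6.6]{character} together with a degenerate ($\epsilon\to 0$) limit of Heine's $q$-Gauss summation, whereas you prove the needed inputs directly, namely the identity $\sum_\lambda q^{|\lambda|}v^{\lambda_i}=\bigl((q;q)_{i-1}(q^iv;q)_\infty\bigr)^{-1}$ via the decomposition of $\lambda$ at its $i$-th part, followed by the $q$-binomial theorem. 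Since the degenerate limit of Heine's sum \emph{is} the $q$-binomial theorem, the two computations are equivalent; yours is the more self-contained version. I checked your partition identity and the resulting product; both are correct and reproduce the first displayed formula.

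Two points need attention in the second half. First, when you absorb $1/S(i\sqrt{t_1t_2}\,x)$ into the exponent you assert that this "produces" the diagonal Bernoulli terms, but the constants should actually be checked: one has $-\log S(i\sqrt{t_1t_2}\,x)=\sum_{k\ge1}(-1)^{k+1}\tfrac{B_{2k}}{2k\,(2k)!}(t_1t_2)^k x^{2k}$, whose $k=1$ coefficient is $1/24$, while the displayed exponent with $G^{1}_{2}=-B_2/2+\cdots$ contributes $B_2/2=1/12$ at order $(t_1t_2)x^2$; so the constant-in-$q$ parts do not match the stated normalization on the nose and must be reconciled (this appears to be a normalization issue in the statement itself, but your write-up glosses over the verification). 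Second, your justification of the final non-quasi-Jacobi claim is not correct as stated: for $i\neq j$ with $i+j$ even, $G^0_{i+j}$ differs from the quasimodular Eisenstein series $G_{i+j}$ only by a constant and hence \emph{is} quasi-Jacobi, so asymmetry of the monomial $t_1^it_2^j$ is not the obstruction. The genuine obstruction comes from the terms with $i+j$ odd, where $\sum_n\sigma_{i+j-1}(n)q^n$ is an odd-weight divisor sum not lying in $\BC[G_2,G_4,G_6]$; these terms carry the symmetric factor $t_1^it_2^j+t_1^jt_2^i$, which is divisible by $t_1+t_2$, consistent with quasi-Jacobiness reappearing exactly on the anti-diagonal. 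Your suggestion to inspect a low-order coefficient (e.g.\ of $x^3$) would surface this, so the gap is easily repaired, but the reasoning as written would not survive the case $i+j$ even, $i\neq j$.
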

\begin{proof}
By \eqref{dadf} 
the left hand side is given precisely by the expression in \eqref{4ygfg} in case $n=1$.
In this case, we can reuse the proof strategy of \cite[Thm.~6.5]{character}. This yields
\begin{align*}
\sum_{k \geq 0} Z^{E\times \BC^2,\pi}(\tch_k(\gamma))x^k 
    &=\prod_{n\geq 1} (1-q^n)\sum_{\lambda}q^{|\lambda|} \sum_{i\geq 1} e^{(-\lambda_it_2-it_1)x} \\
    & = \left\langle \sum_i e^{(-\lambda_it_2-it_1)x} \right\rangle_q \\
    &= \sum_{k\geq 1} e^{-t_1x}\frac{(q)_\infty}{(qe^{-t_2x})_\infty}\sum_{r\geq 0} e^{t_1rx}\frac{(qe^{-t_2x})_r}{(q)_r} \\
& = e^{-t_1x}\frac{(q)_\infty (qe^{(-t_1-t_2)x})_\infty}{(qe^{-t_2x})_\infty (e^{-t_1x})_\infty}
\end{align*}
where $(a)_n = \prod_{k=0}^n (1-aq^k)$ and 
\[
    \left<f\right>_q = (q)_\infty \sum_{\lambda} f(\lambda) q^{|\lambda|}.
\]
In the above computation we used \cite[Lem.~6.6]{character} and Heine's $q$-analog of the Gauss $_2 F_1$-summation, where for any $a,b,c$ with $|c|< |ab|$:
\[
    \sum_{n=0}^\infty \frac{(a)_n(b)_n}{(c)_n(q)_n}\left(\frac{c}{ab}\right)^n = \frac{(c/a)_\infty(c/b)_\infty}{(c)_\infty(c/ab)_\infty}
\]
This was used in the last equality with $a=\epsilon, b=qe^{-t_2x},c=qe^{(-t_1-t_2)x}\epsilon$ after taking the limit $\epsilon\to 0$.
The claim now follows.
\end{proof}

\end{document}